\newtheorem{remark}{Remark}[section]
\newtheorem{example}{Example}[section]
\title{Solving Poisson Problems in Polygonal Domains with Singularity Enriched Physics Informed Neural Networks\thanks{The work of B. Jin is supported by UK EPSRC grant EP/V026259/1, Hong Kong RGC General Research Fund (Project 14306423) and a start-up fund from The Chinese University of Hong Kong. The work of  Z. Zhou is supported by Hong Kong Research Grants Council (15303021) and an internal grant of Hong Kong Polytechnic University (Project ID: P0038888, Work Programme: 1-ZVX3).}}
\author{Tianhao Hu\thanks{Department of Mathematics, The Chinese University of Hong Kong, Shatin, New Territories, Hong Kong, P.R. China (\texttt{1155202953@link.cuhk.edu.hk, bangti.jin@gmail.com, b.jin@cuhk.edu.hk}).}
\and Bangti Jin\footnotemark[2]
\and Zhi Zhou\footnotemark[3]\thanks{Department of Applied Mathematics,
The Hong Kong Polytechnic University, Kowloon, Hong Kong, P.R. China (\texttt{zhizhou@polyu.edu.hk})}}
\date{\today}
\begin{document}

\maketitle

\begin{abstract}
Physics-Informed Neural Networks (PINNs) are a powerful class of numerical solvers for partial differential equations, employing deep neural networks with successful applications across a diverse set of problems. However, their effectiveness is somewhat diminished when addressing issues involving singularities, such as point sources or geometric irregularities, where the approximations they provide often suffer from reduced accuracy due to the limited regularity of the exact solution. In this work, we investigate PINNs for solving Poisson equations in polygonal domains with geometric singularities and mixed boundary conditions. We propose a novel singularity enriched PINN (SEPINN), by explicitly incorporating the singularity behavior of the analytic solution, e.g., corner singularity, mixed boundary condition and edge singularities, into the ansatz space, and present a convergence analysis of the scheme. We present extensive numerical simulations in two and three-dimensions to illustrate the efficiency of the method, and also a comparative study with several existing neural network based approaches.
\end{abstract}

\begin{keywords}
Poisson equation, corner singularity, edge singularity, physics informed neural network, singularity enrichment
\end{keywords}

\begin{AMS}
{65N12}
\end{AMS}

\pagestyle{myheadings}
\thispagestyle{plain}

\section{Introduction}

Partial differential equations (PDEs) represent a broad class of mathematical models that occupy a vital role in physics, science and engineering. Many traditional PDE solvers have been developed, e.g., finite difference method, finite element method and finite volume method. They have been maturely developed over past decades, and efficient implementations and mathematical guarantees are also available. In the last few years, motivated by the great successes in diverse areas (computer vision, speech recognition and natural language processing etc), neural solvers for PDEs using deep neural networks (DNNs) have received much attention \cite{lagaris1998artificial}. The list of neural solvers includes physics informed neural networks (PINNs) \cite{RAISSI2019686}, deep Ritz method (DRM) \cite{yu2018deep}, deep Galerkin method \cite{sirignano2018dgm}, weak adversarial network \cite{Zang:2020} and deep least-squares method \cite{CaiChenLiu:2021}, to name a few. Compared with traditional methods, neural PDE solvers have shown very promising results in several direct and inverse problems \cite{Karniadakis:2021nature,EHanJentzen:2022,CuomoSchiano:2022}.

In these neural solvers, one employs DNNs as ansatz functions to approximate the solution to the PDE either in strong, weak or Ritz forms. Existing approximation theory of DNNs \cite{GuhringRaslan:2021} indicates that the accuracy of DNN approximations depends crucially on the Sobolev regularity of the solution (also suitable stability of the mathematical formulation). Thus, these methods might be ineffective or even fail completely when applied to problems with irregular solutions \cite{WangPerdikaris:2022jcp,Krishnapriyan:2021}, e.g., convection-dominated problems, transport problem, high-frequency wave propagation, problems with geometric singularities (cracks / corner singularity) and singular sources. All of these settings lead to either strong directional behavior, solution singularities or highly oscillatory behavior, which are challenging for standard DNNs to approximate effectively.

Thus, there is an imperative need to develop neural solvers for PDEs with nonsmooth solutions. Several recent efforts have been devoted to addressing the issue, including self-adaptive PINN (SAPINN) \cite{huang2021solving}, failure-informed PINN (FIPINN) \cite{GaoTangYanZhou:2023, GaoYanZhou:2022} and singularity splitting DRM (SSDRM) \cite{HuJinZhou:2022}. SAPINN extends the PINN by splitting out the regions with singularities and then setting different weights to compensate the effect of singular regions. FIPINN \cite{GaoYanZhou:2022} is inspired by the classical adaptive FEM, using the PDE residual as the indicator to aid judicious selection of sampling points for training. SSDRM \cite{HuJinZhou:2022} exploits analytic insights into the exact solution, by approximating only the regular part using DNNs whereas extracting the singular part explicitly. These methods have shown remarkable performance for problems with significant singularities. One prime example is point sources, whose solutions involve localized singularities that can be extracted using fundamental solutions. See Section \ref{ssec:existing} for further discussions about these methods.

In this work, we continue this line of research for Poisson problems on polygonal domains, which involve geometric singularities, including corners and mixed boundary condition in the two-dimensional (2D) case, and edges in the three-dimensional (3D) case. This represents an important setting in practical applications that has received enormous attention; see \cite{grisvard2011elliptic,KozlovMazya:1997,KozlovMazya:2001,MazyaRossmann:2010} for the solution  theory. We shall develop a class of effective neural solvers for Poisson problems with geometric singularities based on the idea of singularity enrichment, building on known analytic insights of the problems, and term the proposed method singularity enriched PINN (SEPINN).

\subsection{Problem setting}
First, we state the mathematical formulation of the problem. Let $\Omega\in\mathbb{R}^d$ ($d=2,3$) be an open, bounded polygonal domain with a boundary $\partial\Omega$, and $ \Gamma_D $ and $ \Gamma_N $ be a partition of $\partial \Omega $ such
that $ \Gamma_D\cup\Gamma_N=\partial\Omega $ and $ \Gamma_D\cap\Gamma_N=\emptyset $,  with a nonempty $ \Gamma_D $ (i.e.,  Lebesgue measure $|\Gamma_D|\neq 0 $). Let $ n $ denote the unit outward normal vector to $\partial\Omega$, and $\partial_nu$ denote taking the outward normal derivative.  Given a source $f\in L^2(\Omega)$, consider
the following Poisson problem
\begin{align}\label{problem}
	\left\{\begin{aligned}
		-\Delta u &= f,&& \mbox{in }\Omega,\\
		u&=0,&& \mbox{on }\Gamma_D,\\
		\partial_n u&=0,&& \mbox{on }\Gamma_N.
	\end{aligned}\right.
\end{align}
We focus on the zero boundary conditions, and nonzero ones can be transformed to \eqref{problem} using the trace theorem.
Due to the existence of corners, cracks or edges in $\Omega$, the solution $u$ of problem \eqref{problem} typically exhibits singularities, even if $f$ is smooth. The presence of singularities in the solution $u$ severely deteriorates the accuracy of standard numerical methods for constructing approximations, including neural solvers, and more specialized techniques are needed in order to achieve high efficiency. Next we briefly review existing techniques for resolving the singularities.

In the 2D case, there are several classes of traditional numerical solvers based on FEM, including singularity representation based approaches \cite{Fix:1973,StephanWhiteman:1988,cai2001finite}, mesh grading \cite{Raugel:1978,ApelWhiteman:1996,ApelNicaise:1998}, generalized FEM \cite{Fries:2010} and adaptive FEM \cite{GaspozMorin:2009} etc. These methods require different amount of  knowledge about
the analytic solution. The methods in the first class exploits a singular representation of the solution $u$ as a linear combination of singular and regular parts \cite{grisvard2011elliptic,DaugeM}, and can be further divided into four groups.
\begin{itemize}
    \item[(i)] The singular function method augments singular functions to both trial and test spaces \cite{Fix:1973,StephanWhiteman:1988}. However, the convergence of the coefficients (a.k.a. stress intensity factors) sometimes is poor \cite{DestuynderDjaoua:1980}, which may lead to low accuracy. 
    \item[(ii)] The dual singular function method \cite{BlumDobrowolski:1982} employs the dual singular function to extract the coefficients as a postprocessing strategy of FEM, which can also achieve the theoretical rate in practical computation.
    \item[(iii)] The singularity splitting method  \cite{cai2001finite} splits the singular part from the solution $u$ and approximates the smooth part with the Galerkin FEM, and enjoys $H^1(\Omega)$ and $L^2(\Omega)$ error estimates. It can improve the accuracy of the approximation, and the stress intensity factor can also be obtained from the extraction formula, cf. \eqref{eqn:extraction}.
    \item[(iv)] The singular complement method \cite{AssousCiarlet:2000} is based on an orthogonal decomposition of the solution $u$ into a singular part and a regular part, by augmenting the FEM trial space with specially designed singular functions.
\end{itemize}

For 3D problems with edges, the singular functions belong to an infinite dimensional space and their
coefficients are functions defined along edges \cite{grisvard2011elliptic}. Thus, their computation involves approximating
functions defined along edges, and there are relatively few  numerical methods, and numerical
investigations are strikingly lacking. The methods developed for the 2D case do not extend directly to the 3D case. 
In fact, in several existing studies, numerical algorithms and error analysis have been provided, but the methods are nontrivial
to implement  \cite{nkemzi2021singular,StephanWhiteman:1988}. For example, the approach in \cite{nkemzi2021singular} requires evaluating a few dozens of highly singular
integrals at each step, which may lead to serious numerical issues.

\subsection{Our contributions}
In this work, by building analytic knowledge of the problem into numerical schemes, we construct a novel numerical method using PINN to solve Poisson  problems with geometric singularities. The key analytic insight is that the solution $u$  has a singular function representation as a linear combination of a singular function $S$ and a regular part $w$ \cite{DaugeM,1999Singularities,grisvard2011elliptic,1990Singularities}:
$u=S+w$, with $w\in H^2(\Omega)$. The singular function $S$ is determined by the domain $\Omega$, truncation functions and their coefficients. Using this fact, we develop, analyze and test SEPINN, and make the following contributions:
\begin{itemize}
  \item[(i)] develop a novel class of SEPINNs for corner singularities, mixed boundary conditions and edge singularity, for the Poisson problem. 
  \item[(ii)] provide error bounds for the SEPINN approximation.
  \item[(iii)] present numerical experiments for multiple scenarios, including 3D problems with edge singularities and the eigenvalue problem on an L-shaped domain, to illustrate the flexibility and accuracy of SEPINN. We also include a comparative study with existing approaches.
\end{itemize}
To the best of our knowledge, it is the first work systematically exploring the use of singularity enrichment in a neural PDE solver.

The rest of the paper is organized as follows. In Section \ref{sec:prelim} we recall preliminaries on DNNs and its use in PINNs. Then we develop the singularity enriched PINN in Section \ref{sec:SEPINN} for 2D case (corner singularity and mixed boundary conditions) and 3D case (edge singularity) separately. 
In Section \ref{sec:error}, we discuss the convergence analysis of SEPINN. In Section \ref{sec:experiment}, we present  numerical experiments to illustrate the performance of SEPINN, including a comparative study with PINN and its variants, and give further discussions in Section \ref{sec:concl}.
\section{Preliminaries}\label{sec:prelim}

\subsection{Deep neural networks}
We employ standard fully connected feedforward DNNs, i.e., functions $ f_\theta: \mathbb{R}^d\rightarrow \mathbb{R}$, with the DNN parameters $\theta \in\mathbb{R}^{N_\theta}$ ($N_\theta$ is the dimensionality of the DNN parameters). Given a sequence of integers $\{n_\ell\}_{\ell=0}^L$, with $n_0=d$ and $n_L=1$, $f_\theta({x})$ is defined recursively by:
$f^{(0)}={x}$,
$f^{(\ell)}= \varrho({A}_\ell f^{({\ell-1})}+{b}_\ell)$, $ \ell=1,2,\cdots,L-1$,
$f_\theta({x}):=f^{(L)}(x)={A}_Lf^{(L-1)}+{b}_L$,
where ${A}_\ell\in\mathbb{R}^{n_\ell\times n_{\ell-1}}$ and ${b}_\ell\in\mathbb{R}^{n_\ell},\, \ell=1,2,\cdots,L$, are the weight matrix and bias vector at the $\ell$th layer. The nonlinear activation function $ \varrho:\mathbb{R}\to \mathbb{R}$ is applied componentwise to a vector. The integer $L$ is called the depth, and $W:=\max\{n_\ell,\ell=0,1,\cdots,L\}$ the width of the DNN. The set of parameters $\{{A}_\ell,\,{b}_\ell\}_{\ell=1}^L$ of the DNN is trainable and stacked into a big vector $\theta$.  $f^{(0)}$ is called the input layer, $f^{(\ell)}$, $\ell=1,2,\cdots,L-1$, are called the hidden layer and $f_\theta({x})$ is the output layer.

There are many possible choices of $ \varrho$. The most frequently used one in computer vision is rectified linear unit (ReLU), $ \varrho(x)=\max(x,0)$. However, it is not smooth enough for PINN, since PINN requires thrice differentiability of $ \varrho$: two spatial derivatives in the loss, and another derivative in the DNN parameters $\theta$ (for the optimizer). For neural PDE solvers, hyperbolic tangent $ \varrho(x)=\frac{{\rm e}^x-{\rm e}^{-x}}{{\rm e}^x+{\rm e}^{-x}}$ and logistic / sigmoid $  \varrho(x)=\frac{1}{1+{\rm e}^{-x}} $ are often used \cite{RAISSI2019686,CuomoSchiano:2022}. We employ the hyperbolic tangent.
We denote the collection of DNN functions of depth $L$, with $N_\theta$ nonzero parameters, and each the parameter bounded by $B_\theta$, with the activation function $ \varrho$ by $\mathcal{N}_ \varrho(L,N_\theta, B_\theta)$, i.e., $\mathcal{N}_ \varrho(L,N_\theta,B_\theta) = \{w_\theta: w_\theta ~ \mbox{has a depth } L,\, |\theta|_0\leq N_\theta, \, |\theta|_{\ell^\infty}\leq B_\theta\}$,
where $|\cdot|_{\ell^0}$ and $|\cdot|_{\ell^\infty}$ denote the number of nonzero entries in and the maximum norm of a vector, respectively. We also use the notation $\mathcal{A}$ to denote this collection of functions.

\subsection{Physics informed neural networks}\label{ssec:pinn}
Physics informed neural networks (PINNs) \cite{RAISSI2019686} represent one popular neural solver based on the principle of PDE residual minimization. For problem \eqref{problem}, the continuous loss $\mathcal{L}(u)$ is given by
\begin{equation}
    \mathcal{L}_{\boldsymbol\sigma}(u) = \|\Delta u + f\|_{L^2(\Omega)}^2 + \sigma_d \|u\|_{L^2(\Gamma_D)}^2 + \sigma_n \|\partial_nu\|_{L^2(\Gamma_N)}^2,
\end{equation}
where the tunable penalty weights $\sigma_d,\sigma_n>0$ are to approximately enforce the boundary conditions, and $\boldsymbol{\sigma}=(\sigma_d,\sigma_n)$. We approximate the solution $u$ by an element $u_\theta \in \mathcal{A}$, and then discretize relevant integrals using quadrature, e.g., Monte Carlo method. Let $U(D)$ be the uniform distribution over a set $D$, and let $|D|$ denote its Lebesgue measure. Then we can rewrite the loss $\mathcal{L}(u_\theta)$ as
\begin{align*}
    \mathcal{L}_{\boldsymbol\sigma}(u_\theta) =& |\Omega|\mathbb{E}_{U(\Omega)}[(\Delta u_\theta(X) + f(X))^2] + \sigma_d |\Gamma_D|\mathbb{E}_{U(\Gamma_D)}[(u_\theta(Y))^2] \\
     &+ \sigma_n|\Gamma_N|\mathbb{E}_{U(\Gamma_N)} [(\partial_nu_\theta(Z))^2],
\end{align*}
where $\mathbb{E}_\nu$ takes expectation with respect to a distribution $\nu$. Let the sampling points $\{X_i\}_{i=1}^{N_r}$, $\{Y_j\}_{j=1}^{N_d}$ and $\{Z_k\}_{k=1}^{N_n}$ be identically and independently distributed (i.i.d.), uniformly on $\Omega$, $\Gamma_D$ and $\Gamma_N$, respectively, i.e., $\{X_i\}_{i=1}^{N_r}\sim U(\Omega)$, $\{Y_j\}_{j=1}^{N_d}\sim U(\Gamma_D)$ and $\{Z_k\}_{k=1}^{N_n}\sim U(\Gamma_N)$. Then the empirical loss $\widehat{\mathcal{L}}_{\boldsymbol\sigma}(u_\theta)$ is given by
\begin{equation*}
    \widehat{\mathcal{L}}_{\boldsymbol\sigma}(u_\theta) = \frac{|\Omega|}{N_r}\sum_{i=1}^{N_r}(\Delta u_\theta(X_i) + f(X_i))^2+ \frac{\sigma_d |\Gamma_D|}{N_{d}}\sum_{j=1}^{N_d}(u_\theta(Y_j))^2 + \frac{\sigma_n|\Gamma_N|}{N_n}\sum_{k=1}^{N_n} (\partial_nu_\theta(Z_k))^2.
\end{equation*}
Note that the resulting optimization problem $\widehat{\mathcal{L}}_{\boldsymbol\sigma}(u_\theta)$ over $\mathcal{A}$ is well posed due to the box constraint on the DNN parameters $\theta$, i.e., $|\theta|_{\ell^\infty}\leq R$ for suitable $R$, which induces a compact set in $\mathbb{R}^{N_\theta}$. Meanwhile the empirical loss $\widehat{\mathcal{L}}_{\boldsymbol\sigma}(u_\theta)$ is continuous in $\theta$, when $ \varrho$ is smooth. In the absence of the box constraint, the optimization problem might not have a finite minimizer.

The loss $\widehat{\mathcal{L}}_{\boldsymbol\sigma}(u_\theta)$ is minimized with respect to the DNN  parameters $\theta$. This is often achieved by gradient type algorithms, e.g.,  Adam \cite{KingmaBa:2015} or limited memory BFGS \cite{ByrdLu:1995}, which returns an approximate minimizer $\theta^*$. The DNN approximation to the PDE solution $u$ is given by $u_{\theta^*}$. Note that the major computational effort, e.g., gradient of $\widehat{\mathcal{L}}_{\boldsymbol\sigma}(u_\theta)$ with respect to the DNN parameters $\theta$  and the DNN $u_\theta(x)$ with respect to the input $x$ can both be computed efficiently via automatic differentiation \cite{Baydin:2018}, which is available in many software platforms, e.g., PyTorch or Tensorflow. Thus, the method is very flexible and easy to implement, and applicable to a wide range of direct and inverse problems for PDEs  \cite{Karniadakis:2021nature}.

The population loss $\mathcal{L}_{\boldsymbol\sigma}(u_\theta) $ and empirical loss $ \widehat{\mathcal{L}}_{\boldsymbol\sigma}(u_\theta)$ have different minimizers,
due to the presence of quadrature errors. The analysis of these errors is known as generalization error analysis in statistical learning theory \cite{AnthonyBartlett:1999}. The theoretical analysis of PINNs has been investigated  in several works under different settings \cite{ShinDarbon:2020,JiaoLai:2022cicp,MishraMolinaro:2023,LuChenLu:2021,DeRyckMishra:2023acm,DeRyckJagtapMishra:2023ima}. Important issues addressed in these works include the existence of a neural network such that the population / empirical loss is small, connection between the error of the approximate solution with the population loss (via suitable stability of the formulation), and the gap between the population loss and training loss. These roughly correspond to approximation theory of DNNs in Sobolev spaces, coercivity of the formulation, and (Monte Carlo) quadrature error. These mathematical theories require that the solutions to the problems be smooth, e.g., $C^2(\overline{\Omega})$ in order to achieve consistency \cite{ShinDarbon:2020} and even stronger regularity for convergence rates \cite{JiaoLai:2022cicp}. Such conditions unfortunately cannot be met for problem \eqref{problem}, due to the inherently limited solution regularity. Thus, it is not \textit{a priori} clear that one can successfully apply PINNs to problem \eqref{problem}. This is also confirmed by the numerical experiments in Section \ref{sec:experiment}. In practice, despite the ease of implementation, the standard PINN may fail to converge \cite{Krishnapriyan:2021,WangPerdikaris:2022jcp}. Indeed, there are several failure modes, including the curse of dimensionality, low regularity and discontinuities etc. The concerned class of problems involves solution singularity, which represents one typical failure mode of PINN. In this work, we shall develop an effective strategy to overcome the challenge.

\subsection{Two existing methods}\label{ssec:existing} Now we describe two PINN approaches to address the singularity, i.e., SAPINN
 and FIPINN, for the Dirichlet problem of the Laplace equation with a singular source $f$ with singularities located within a subdomain $\Omega_s\subset \Omega$.

In SAPINN \cite{huang2021solving}, one treats the PDE residuals in the regions $\Omega_s$ and $\Omega\setminus\Omega_s$ separately and further splits the loss into
\begin{equation*}
		L(u_\theta)=\lambda_sL_{s}(u_\theta)+\lambda_rL_{r}(u_\theta)+\lambda_bL_{b}(u_\theta),
\end{equation*}
with $ L_{s}(u_\theta)=\|\Delta u_\theta+f\|_{L^2(\Omega_s)}^2$,
$L_{r}(u_\theta)=\|\Delta  u_\theta+f\|_{L^2(\Omega\backslash \Omega_s)}^2$ and $
L_{b}(u_\theta)=\|u_\theta\|_{L^2(\partial\Omega)}^2$,
and $\lambda_i$, $i\in \{r,s,b\}$, being positive weights. The partition of $\Omega$ into $\Omega_s$ and $\Omega\subset \Omega_s$ and using different penalty parameters allow better compensation for the singular behavior of $u$ in the subdomain $\Omega_s$, in a manner similar to weighted Sobolev spaces. The experimental results in \cite{huang2021solving} indicate that the method can improve the accuracy of the approximation, but the increase in the number of hyperparameters (i.e., $\lambda_i$s) also complicates the optimization process, due to the need for self adaptation during training, which in turn makes the overall training lengthier.

FIPINN \cite{GaoYanZhou:2022} employs adaptive sampling. Let $\mathcal{Q}(x):\Omega\to [0,\infty)$ be a performance function, e.g., the PDE residual $r(x; \theta)=(\Delta u_\theta(x)+f(x))^2$ and its spatial gradient $\nabla_x r(x;\theta)$, and  $ \Omega_\mathcal{F} := \{x : \mathcal{Q}(x) > \epsilon_r\}$ be the failure region (with tolerance $\epsilon_r$). Then over $\Omega_\mathcal{F}$, one can define the failure probability
$P_\mathcal{F} = \int_{\Omega_\mathcal{F}}
\omega(x) {\rm d}x$, where $\omega(x)$ is the prior distribution of the sampling point $x$.
The failure probability $P_\mathcal{F}$ may serve as an error indicator for training set refinement, similar to the classical adaptive FEM. Using $P_\mathcal{F}$, new candidate collocation points can be generated to form the loss in the hope of enhancing PINN's performance after retraining (simultaneously for estimating $P_\mathcal{F})$. In \cite{GaoYanZhou:2022}, a
truncated Gaussian is taken for importance sampling in order to approximatively determine $P_\mathcal{F}$ and to produce new training points. These steps are looped until a suitable stopping criterion is met.

{Note that to compensate the impact of solution singularity, SAPINN and FIPINN follow different strategies: SAPINN adaptively updates the weights assigned to the losses in the subdomains, whereas FIPINN employs the residual as an a posteriori error estimator to adaptively generate sampling points in the empirical distribution.}

\section{Singularity enriched PINN}\label{sec:SEPINN}
Now we develop a class of singularity enriched PINN (SEPINN) for solving Poisson problems with geometric singularities, including 2D problems with mixed boundary conditions or on polygonal domains, and 3D problems with edge singularity. The key is the singular function representation. We discuss the 2D case in Section \ref{2d domain}, and the more involved 3D case in  Section \ref{3d domain}. The approach applies also to the modified Helmholtz equation; which we describe in the supplementary material.

\subsection{Two-dimensional problem}\label{2d domain}
\subsubsection{Singular function representation}\label{2d:singular repre}
First we develop a singular function representation in the 2D case.
We determine the analytic structure of the solution $u$ of  problem \eqref{problem} using the Fourier method. Consider a vertex $\boldsymbol{v}_j$ of the polygonal domain $\Omega$ with an interior angle $\omega_j$. We denote by $(r_j,\theta_j)$ the local polar coordinate of the vertex $\boldsymbol{v}_j$ so that the interior angle $\omega_j$ is spanned counterclockwise by two rays $ \theta_j = 0 $ and $ \theta_j = \omega_j $. Then consider the local behavior of $u$ near the vertex $\boldsymbol{v}_j$, or in  the sector
$G_j=\{(r_j,\theta_j): 0<r_j<R_j, 0<\theta_j<\omega_j\}.$
Let the edge overlapping with $ \theta_j = 0 $ be $\Gamma_{j_1}$ and with $ \theta_j = \omega_j $ be $\Gamma_{j_2}$, cf. Fig. \ref{fig:sketch}(a) for the sketch. We employ the system of orthogonal and complete set of basis functions $\{\phi_{j,k}\}_{k=1}^\infty$ in $ L^2(0,\omega_j) $ in Table \ref{table 2.1}, where $\lambda_{j,k}$ are the eigenvalues. More precisely, the pairs $(\phi_{j,k},\lambda_{j,k})$ $k=1,\ldots,\infty$, are associated with the following Sturm-Liouville problem on the interval $(0,\omega_j)$:
\begin{equation*}
    -\phi''_{j,k} = \lambda_{j,k} \phi_{j,k}\quad \mbox{in } (0,\omega_j),
\end{equation*}
with suitable zero Dirichlet / Neumann boundary conditions, according to the boundary conditions (b.c.) on the edges $\Gamma_{j_1}$ and $\Gamma_{j_2}$ \cite{2012Flux}.
In the table, we have dropped the subscript $j$ for notational simplicity. We denote the representation of $u$ in the local polar coordinate by $\widetilde u(r_j,\theta_j)$, i.e., $\tilde u(r_j,\theta_j)=u(r_j\cos\theta_j,r_j\sin \theta_j)$.

\begin{figure}[hbt!]
	\centering\setlength{\tabcolsep}{0pt}
	\begin{tabular}{cc}
		\includegraphics[height=4.8cm]  {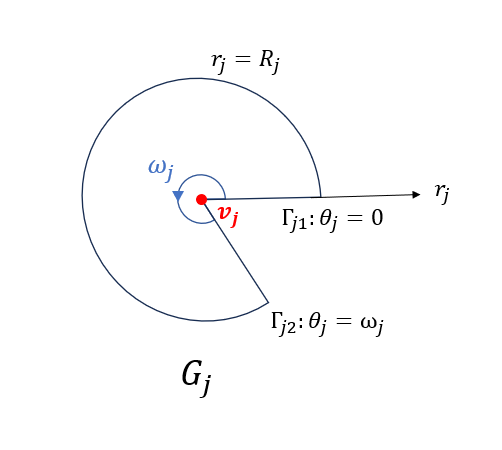} &  \includegraphics[height=4.8cm]  {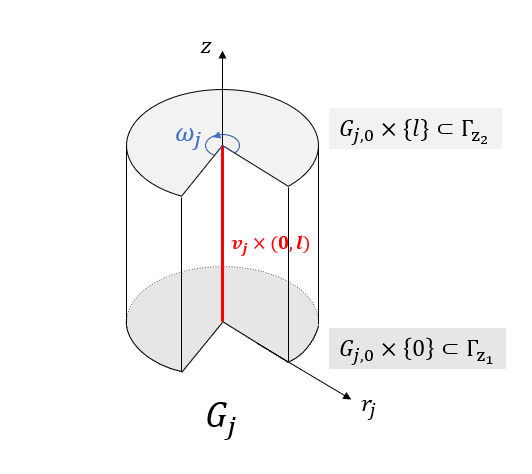}\\
		(a) 2D & (b) 3D
	\end{tabular}
	\caption{\label{fig:sketch} The sketch of the local domain $G_j$ in two- and three-dimensions. In the 3D case,$G_{j,0}$ is defined by $G_{j,0}=\{(x_j,y_j)\in\Omega_0:0<r_j<R_j,0<\theta_j<\omega_j\}$.}
\end{figure}

\begin{table}[hbt]
\centering
\begin{threeparttable}  \caption{Orthogonal basis functions $\phi_{j,k}$ (dropping subscript $j$), $k\in\mathbb{N}$, associated with the vertex $\boldsymbol{v}_j$ in a local polar coordinates, depending on the boundary conditions on $\Gamma_{j_1}$ and $\Gamma_{j_2}$.}\label{table 2.1}
		\begin{tabular}{c|c|c}
			\toprule
		{$\Gamma_{j_2}\backslash\Gamma_{j_1}$} & Dirichlet & Neumann \\
			\midrule
			Dirichlet & $\phi_{k}(\theta)=\sin\lambda_{k}\theta$, $\lambda_{k}=\frac{k\pi}{\omega}$&
			$\phi_{k}(\theta)=\sin\lambda_{k}\theta$, $\lambda_{k}=(k-\frac{1}{2})\frac{\pi}{\omega}$ \\
			\midrule
			Neumann & $\phi_{k}(\theta)=\cos\lambda_{k}\theta$, $\lambda_{k}=(k-\frac{1}{2})\frac{\pi}{\omega}$
			&
			$\phi_{k}(\theta)=\cos\lambda_{k}\theta$, $\lambda_{k}=(k-1)\frac{\pi}{\omega}$ \\
			\bottomrule
		\end{tabular}
	\end{threeparttable}
\end{table}

To study the behavior of the solution $u$ in the sector $G_j$, we assume $\widetilde{u}(R_j,\theta_j)=0$ and $|\widetilde{u}(0,\theta_j)|<\infty$. Since $u$ and $f$ belong to $L^2(G_j)$, they can be represented in Fourier series with respect to $\{\phi_{j,k}\}_{k=1}^\infty$:
\begin{align}
u(x,y)&=\widetilde{u}(r_j,\theta_j)=\sum_{k=1}^{\infty}u_k(r_j)\phi_{j,k}(\theta_j),\quad\mbox{in }G_j,\label{fourier u}\\
f(x,y)&=\widetilde{f}(r_j,\theta_j)=\sum_{k=1}^{\infty}f_k(r_j)\phi_{j,k}(\theta_j),\quad \mbox{in }G_j,\label{fourier f}
\end{align}
with $u_k(r_j)$ and $f_k(r_j)$ given respectively by
$u_k(r_j)=\frac{2}{\omega_j}\int_{0}^{\omega_j}\widetilde{u}(r_j,\theta_j)\phi_{j,k}(\theta_j){\rm d}\theta_j$ and  $f_k(r_j)=\frac{2}{\omega_j}\int_{0}^{\omega_j}\widetilde{f}(r_j,\theta_j)\phi_{j,k}(\theta_j){\rm d}\theta_j.$
Substituting \eqref{fourier u} and \eqref{fourier f} into \eqref{problem} gives the following two-point boundary value problem for each vertex $\boldsymbol{v}_j$:
\begin{align*}
  \left\{\begin{aligned}
     -u_{j,k}'' - r^{-1} u_{j,k}' +\lambda_{j,k}^2r^{-2}u_{j,k} &= f_k, \quad 0<r<R_j,\\
     |u_{j,k}(r)|_{r=0}<\infty, \quad u_{j,k}(r)|_{r=R_j} &= 0,
     \end{aligned}\right.
\end{align*}
Solving the ODE yields directly the closed-form expression
$$u_k(r_j)=c(r_j)r_j^{\lambda_{j,k}} +\frac{r_j^{-\lambda_{j,k}}}{2\lambda_{j,k}}\int_{0}^{r_j}f_k(\tau)\tau^{1+\lambda_{j,k}}{\rm d}\tau,$$
with the factor $c(r_j)$ given by \cite[(2.10)]{2012Flux}
$$c(r_j)=\frac{1}{2\lambda_{j,k}}\int_{r_j}^{R_j}f_k(\tau)\tau^{1-\lambda_{j,k}}{\rm d}\tau - \frac{1}{2\lambda_{j,k} R_j^{2\lambda_{j,k}}}\int_{0}^{R_j}f_k(\tau)\tau^{1+\lambda_{j,k}}{\rm d}\tau.$$
Since the factor $c(r_j)$ in front of $r_j^{\lambda_{j,k}}$ is generally nonzero, and $r_j^{\lambda_{j,k}}\notin H^2(G_j)$ for $\lambda_{j,k}<1$, there exist singular terms in the representation \eqref{fourier u}. This shows the limited regularity of the solution $u$, which is the culprit of low efficiency of standard numerical schemes.

Next we list all singularity points and corresponding singular functions. Let $\boldsymbol{v}_j$, $j=1,2,\cdots,M$, be the vertices of $\Omega$ whose interior angles $\omega_j,j=1,2,\cdots,M$, satisfy
\begin{align}\label{condition}
	\left\{\begin{aligned}
		\pi<\omega_j<2\pi,&& \mbox{b.c. doesn't change its type},\\
		\pi/2<\omega_j<2\pi,&& \mbox{b.c. changes its type}.
	\end{aligned}\right.
\end{align}
Table \ref{tab:sing-2d} gives the index set $\mathbb{I}_j$ and the associated singularity functions \cite[p. 2637]{cai2006finite}. Note that $s_{j,1}\in H^{1+\frac{\pi}{\omega}-\epsilon}(\Omega)$, $s_{j,\frac{1}{2}}\in H^{1+\frac{\pi}{2\omega}-\epsilon}(\Omega)$, and $s_{j,\frac{3}{2}}\in H^{1+\frac{3\pi}{2\omega}-\epsilon}(\Omega)$ for small $\epsilon>0$. Upon letting
\begin{align*}
	\omega^*=\max_{1\leq j\leq M}{\hat{\omega_j}},\quad\mbox{with }\hat{\omega_j}=\left\{\begin{aligned}
		\omega_j,&& \mbox{b.c. doesn't change its type at }\boldsymbol{v}_j,\\
		2\omega_i,&& \mbox{b.c. changes its type at }\boldsymbol{v}_j,
	\end{aligned}\right.
\end{align*}
the solution $u$ belongs to $H^{1+\frac{\pi}{\omega^*}-\epsilon}(\Omega)$, just falling short of $H^{1+\frac{\pi}{\omega^*}}(\Omega)$. If $\omega^*>\pi $, $u$ fails to belong to $H^2(\Omega)$. Hence, it is imperative to develop techniques to resolve such singularities.

\begin{table}[hbt]
\centering\setlength{\tabcolsep}{5pt}
\begin{threeparttable}
\caption{Singularity functions $s_{j,i}$ (suppressing the subscript $j$), depending on the boundary condition. The tuple $(r_j,\theta_j)$ refers to the local polar coordinate of the vertex $\boldsymbol{v}_j$, and $\mathbb{I}_j$ is an index set for leading singularities associated with $\boldsymbol{v}_j$.}\label{tab:sing-2d}
		\begin{tabular}{c|c|c}
			\toprule
      {$\Gamma_{j2}\backslash\Gamma_{j1}$} & Dirichlet & Neumann \\
			\midrule
			\multirow{2}{*}{Dirichlet} & \multirow{2}{*}{$ s_{1}(r,\theta_j)=r^\frac{\pi}{\omega}\sin\frac{\pi\theta}{\omega} $,} & \makecell{$ s_{\frac{1}{2}}(r,\theta)=r^\frac{\pi}{2\omega}\cos\frac{\pi\theta}{2\omega} $,\\[5pt]$\mathbb{I}=\left\{\frac{1}{2}\right\}$, if $\frac{\pi}{2}<\omega_j\leq\frac{3\pi}{2}$}\\
			\cline{3-3}
			 & $\mathbb{I}=\{1\}$&
			\makecell{\\$ s_{\frac{1}{2}}(r,\theta)=r^\frac{\pi}{2\omega}\cos\frac{\pi\theta}{2\omega} $ and\\ $ s_{\frac{3}{2}}(r,\theta)=r^\frac{3\pi}{2\omega}\cos\frac{3\pi\theta}{2\omega} $\\[5pt]$\mathbb{I}=\left\{\frac{1}{2},\frac{3}{2}\right\}$, if $\frac{3\pi}{2}<\omega\leq2\pi$} \\
			\midrule
			\multirow{2}{*}{Neumann} &  \makecell{$s_{\frac{1}{2}}(r,\theta)=r^\frac{\pi}{2\omega}\sin\frac{\pi\theta}{2\omega} $,\\[5pt]$\mathbb{I}=\left\{\frac{1}{2}\right\}$, if $\frac{\pi}{2}<\omega\leq\frac{3\pi}{2}$} & \multirow{2}{*}{$ s_{1}(r,\theta_j)=r^\frac{\pi}{\omega}\cos\frac{\pi\theta}{\omega} $,}\\
			\cline{2-2}
			&
			\makecell{$ s_{\frac{1}{2}}(r,\theta)=r^{\frac{\pi}{2\omega}}\sin\tfrac{\pi\theta}{2\omega} $ and\\ $ s_{\frac{3}{2}}(r,\theta)=r^{\frac{3\pi}{2\omega}}\sin\tfrac{3\pi\theta}{2\omega} $\\[5pt]$\mathbb{I}=\left\{\frac{1}{2},\frac{3}{2}\right\}$, if $\frac{3\pi}{2}<\omega\leq2\pi$} & $\mathbb{I}=\{1\}$\\
			\bottomrule
		\end{tabular}
	\end{threeparttable}
\end{table}

We employ a smooth cut-off function $\eta_\rho$, defined for $\rho\in(0,2]$ by  \cite{cai2001finite,cai2001solution}
\begin{align}\label{cutoff}
	\eta_{\rho}(r_j)=\left\{\begin{aligned}
		1,&& 0<r_j<\frac{\rho R}{2},\\
		\tfrac{15}{16}(\tfrac{8}{15}-\left(\tfrac{4r_j}{\rho R}-3\right)+\tfrac{2}{3}(\tfrac{4r_j}{\rho R}-3)^3-\tfrac{1}{5}(\tfrac{4r_j}{\rho R}-3)^5),&& \tfrac{\rho R}{2}\leq r_j<\rho R,\\
		0,&& r_j\geq\rho R,\\
	\end{aligned}\right.
\end{align}
where $R\in\mathbb{R}_+$ is a fixed number so that $ \eta_{\rho}$
vanishes identically on $ \partial\Omega $. In practice, we take $R$ to be small enough so that when $i\neq j$, the support of $\eta_\rho(r_i)$ does no intersect with that of $\eta_\rho(r_j)$. By construction, we have $\eta_{\rho}\in C^2([0,\infty))$. This choice of $\eta_\rho(r_j)$ preserves the zero boundary condition and also enjoys good regularity (and thus does not introduce extra singularities). In practice, any choice satisfying these desirable properties is valid. Then the solution $u$ of problem \eqref{problem} has the following singular function representation \cite{I1984The,DaugeM,Le2008e}
\begin{equation}\label{split}
u=w+\sum_{j=1}^{M}\sum_{i\in\mathbb{I}_j}\gamma_{j,i}\eta_{\rho_j}(r_j) s_{j,i}(r_j,\theta_j),\quad\mbox{with } w\in H^2(\Omega)\cap H_0^1(\Omega),
\end{equation}
where the scalars $\gamma_{j,i}\in\mathbb{R}$ are known as stress intensity factors and given by the following extraction formulas \cite[Lemma 8.4.3.1]{grisvard2011elliptic}:
\begin{equation}\label{eqn:extraction}
	\gamma_{j,i}=\frac{1}{ i\pi}\left(\int_\Omega f\eta_{\rho_j} s_{j,-i}{\rm d}x+\int_\Omega u\Delta(\eta_{\rho_j}s_{j,-i}){\rm d} x\right),
\end{equation}
where $s_{j,-i}$ denotes dual singular functions of $s_{j,i}$. Specifically, if $ s_{j,i}(r_j,\theta_j) =r_j^{\frac{i\pi}{\omega_j}}\sin \frac{i\pi\theta_j}{\omega_j} $, the dual function $s_{j,-i}$ is given by
$s_{j,-i}(r_j,\theta_j)=r_j^{-\frac{i\pi}{\omega_j}}\sin \frac{i\pi\theta_j}{\omega_j}$ \cite[p. 2639]{cai2006finite}. Moreover, the following regularity estimate on the regular part $w$ holds  \cite{cai2006finite,grisvard2011elliptic}
\begin{equation}\label{eqn:2D-apriori}
\|w\|_{H^2(\Omega)} + \sum_{j=1}^M \sum_{i\in \mathbb{I}_j}|\gamma_{j,i}| \leq c\|f\|_{L^2(\Omega)}.
\end{equation}

\subsubsection{Singularity enriched physics-informed neural network}
Now we propose  singularity enriched PINN (SEPINN) for problem \eqref{problem}, inspired by the representation \eqref{split}. The idea of singularity enrichment / splitting has been widely used in the context of FEM (see the introduction for details), and also in SSDRM \cite{HuJinZhou:2022}, which splits out explicitly the singular part due to point / line sources. We discuss only the case with one singular function in $\Omega$ satisfying the condition \eqref{condition}. The case of multiple singularities can be handled similarly. With $S=\gamma\eta_\rho s$, the regular part $w$ satisfies
\begin{align}\label{modif pro}
	\left\{\begin{aligned}
		-\Delta w &= f + \gamma\Delta(\eta_\rho s),&& \mbox{in }\Omega,\\
		w&=0,&& \mbox{on }\Gamma_D,\\
		\partial_n w&=0,&& \mbox{on }\Gamma_N,
	\end{aligned}\right.
\end{align}
where $\gamma\in\mathbb{R}$ is unknown. Since $w\in H^2(\Omega)$, it can be well approximated using PINN. The parameter $\gamma$ can be either learned together with $w$ or extracted from $w$ via \eqref{eqn:extraction}.

Based on the principle of PDE residual minimization, the solution $w^*$ of \eqref{modif pro} and the exact parameter $\gamma^*$ in \eqref{split} is a global minimizer of the following loss
\begin{equation}\label{prob}
	\mathcal{L}_{\boldsymbol\sigma}(w,\gamma)=\|\Delta w +f+ \gamma\Delta(\eta_\rho s)\|_{L^2(\Omega)}^2+\sigma_d\|w\|_{L^2(\Gamma_D)}^2+\sigma_n\left\|\partial_n w\right\|_{L^2(\Gamma_N)}^2,
\end{equation}
where the penalty weights $\boldsymbol\sigma=(\sigma_d,\sigma_n)\in\mathbb{R}^2_+$ are tunable. Following Section \ref{ssec:pinn}, we employ a DNN $w_\theta\in \mathcal{A}$ to approximate  $w^*\in H^2(\Omega)$, and treat the parameter $\gamma$ as a trainable parameter and learn it along with the DNN parameters $\theta$. This leads to an empirical loss
\begin{equation}\label{2deqn:loss-emp}
	\begin{aligned}
\widehat{\mathcal{L}}_{\boldsymbol\sigma}(w_\theta,\gamma)=&\dfrac{|\Omega|}{N_r}\sum_{i=1}^{N_r}(\Delta w_\theta(X_i)+f(X_i)+\gamma\Delta(\eta_\rho s)(X_i))^2\\
&+\sigma_d\dfrac{|\Gamma_D|}{N_d}\sum_{j=1}^{N_d}w_\theta^2(Y_j)+\sigma_n \dfrac{|\Gamma_N|}{N_n}\sum_{k=1}^{N_n}\left(\partial_n w_\theta(Z_k)\right)^2,
	\end{aligned}
\end{equation}
with i.i.d. sampling points $\{X_i\}_{i=1}^{N_r}\sim U(\Omega)$, $\{Y_j\}_{j=1}^{N_d}\sim U(\Gamma_D)$ and
$\{Z_k\}_{k=1}^{N_n}\sim U(\Gamma_N)$. Let $(\widehat \theta^*,\widehat \gamma^*)$ be a minimizer of the
empirical loss $ \widehat{L}(w_\theta,\gamma)$. Then $w_{\widehat{\theta^*}}\in\mathcal{A}$ is the DNN
approximation of the regular part $w^*$, and the approximation $\hat u$ to $u$ is given by
$\hat u = w_{\hat \theta^*} + \hat\gamma^* \eta_\rho s$.

Now we discuss the training of the loss $\widehat{\mathcal{L}}_{\boldsymbol{\sigma}}(w_\theta,\gamma) $. One can minimize $\widehat{\mathcal{L}}_{\boldsymbol{\sigma}}(w_\theta,\gamma)$ directly with respect to $\theta$ and $\gamma$, which works reasonably. However, the DNN approximation $\widehat u$ tends to have larger errors on the boundary $\partial\Omega$ than in the domain $\Omega$, but the estimated $ \widehat\gamma^* $ is often accurate. Thus we adopt a two-stage training procedure.
\begin{itemize}
  \item[(i)] At Stage 1, minimize the loss $\widehat{\mathcal{L}}_{\boldsymbol\sigma}(w_\theta,\gamma)$ for a fixed $\boldsymbol\sigma$, and obtain the minimizer $(\widehat\theta^*,\widehat \gamma^*)$.
  \item[(ii)] At Stage 2, fix $\gamma$ in $\widehat{\mathcal{L}}_{\boldsymbol\sigma}(w_\theta,\gamma)$ at $\widehat \gamma^*$, and learn $\theta$ via a path-following strategy \cite{LuJohnson:2021,HuJinZhou:2022}.
\end{itemize}
The estimate $\widehat{\gamma}^*$ depends on the choice of $\boldsymbol\sigma$, but numerically it does not vary much with the choice. Moreover, the optimal $\widehat\gamma^*$ may vanish, i.e., the solution $u$ is smooth.

Now we describe a path-following strategy to update $\boldsymbol{\sigma}$. We start with small values $\boldsymbol\sigma^{(1)}=\boldsymbol\sigma$. After each loop (i.e., finding one minimizer $\widehat\theta_k^*$), we update $\boldsymbol{\sigma}$ geometrically: $\boldsymbol{\sigma}^{(k+1)}=q\boldsymbol{\sigma}^{(k)}$, with
$q>1$. By updating $\boldsymbol{\sigma}^{(k)}$, the minimizer $\widehat{\theta}^*_k$ of the loss $\widehat{\mathcal{L}}_{\boldsymbol{\sigma}^{(k)}}(w_\theta,\widehat{\gamma}^*)$ also approaches that of problem \eqref{prob}, and the path-following strategy enforces the boundary conditions progressively, which is beneficial to obtain good approximations, since when $\boldsymbol{\sigma}$ is large, the optimization problem is known to be numerically stiff. Note that the minimizer $\widehat\theta_{k+1}^*$ of the $\boldsymbol{\sigma}^{(k+1)}$-problem (i.e., minimizing $\widehat{\mathcal{L}}_{\boldsymbol\sigma^{(k+1)}}(w_\theta,\widehat{\gamma}^*)$) can be initialized to $\widehat{\theta}_k^*$ of the $\boldsymbol\sigma^{(k)}$-problem to warm start the training process. Hence, for each fixed $\boldsymbol\sigma^{(k)}$ (except $\boldsymbol\sigma^{(1)}$), the initial parameter configuration is close to the optimal one, and the training only requires few iterations to reach convergence. The overall procedure is shown in Algorithm \ref{algorithm:2d} for 2D problems with corner singularities and / or mixed boundary conditions. The stopping condition at line 3 of the algorithm reads: the iteration terminates whenever the maximum parameter vector $\boldsymbol{\sigma}^*$  is reached or the validating error falls below a certain threshold.

\begin{algorithm}[H]
\caption{SEPINN for 2D problems}\label{algorithm:2d}
\begin{algorithmic}[1]
\State Set $\boldsymbol\sigma^{(1)}$, and obtain the minimizer $(\widehat{\theta}^*,\widehat{\gamma}^*)$ of the loss $\widehat{\mathcal{L}}_{\boldsymbol\sigma^{(1)}}(w_\theta,\gamma)$.
\State Set $k=1$, $\widehat{\theta}_{0}^*=\widehat{\theta}^*$, and increasing factor $q>1$.
\While{Stopping condition not met}
	\State Find a minimizer $\widehat{\theta}_{k}^*$ of the loss $\widehat{\mathcal{L}}_{\boldsymbol\sigma^{(k)}}(w_\theta,\widehat{\gamma}^*)$ (initialized to $\widehat{\theta}_{k-1}^*$).
   \State Update $\boldsymbol\sigma$ by $\boldsymbol\sigma^{(k+1)} =q\boldsymbol\sigma^{(k)}$, and $ k \leftarrow k+1$.
\EndWhile
\State Output the SEPINN approximation $\widehat{u}=w_{\widehat{\theta}_{k-1}^*}+\widehat{\gamma}^*\eta s$.
	\end{algorithmic}
\end{algorithm}

PINN exhibits the so-called frequency principle during the training: the low-frequency components are learned at a faster rate than the high-frequency ones \cite{WangPerdikaris:2022jcp}. In this vein, SEPINN harnesses this fact: by extracting the singular part, the regular part is smoother and thus by the frequency bias, it easier to learn.
\begin{remark}
In this work we have focused on enforcing boundary conditions with the penalty method, due to its ease of implementation. {Yet the penalty approach} incurs the consistency error and restricts the norm in which the error can be bounded. In practice one may also enforce the boundary conditions exactly using {\rm(}approximate{\rm)} distance functions for domains with simple geometry \cite{SukumarSrivastava:2022}.
However,  {the construction of distance functions for a general polygonal domain is fairly complicated}, and thus we do not pursue the idea in this work.
\end{remark}

\subsection{Three-dimensional problem}\label{3d domain}
Now we develop SEPINN for the 3D Poisson problem with edge singularities.
\subsubsection{Singular function representation}\label{3d:singular repre}
There are several different types of geometric singularities in the 3D case, and each case has to be dealt with separately \cite{1999Singularities}. We only study edge singularities, which cause strong solution singularities. Indeed, the $H^2(\Omega)$-regularity of the solution $u$ of problem \eqref{problem} is not affected by the presence of conic points  \cite{1990Singularities,grisvard2011elliptic,KozlovMazya:2001}. Now we state the precise setting. Let $\Omega_0\subset \mathbb{R}^2$ be a polygonal domain as in Section \ref{2d domain}, and $\Omega=\Omega_0\times(0,l)$.
Like before, let $\boldsymbol{v}_j,j=1,2,\cdots,M$, be the vertices of $\Omega_0$ whose interior angles $\omega_j,j=1,2,\cdots,M,$ satisfy \eqref{condition}, and let $\Gamma_{z_1}=\Omega_0\times\{0\}$ and $\Gamma_{z_2}=\Omega_0\times\{l\}$. We employ a complete orthogonal system $\{Z_{j,n}\}_{n=0}^\infty$ of $ L^2(0, l) $, given in Table \ref{table 2.3}.

\begin{table}[h]
	\centering
	\setlength{\tabcolsep}{5pt}\begin{threeparttable}  \caption{The orthogonal basis $\{Z_{j,n}(z)\}_{n=0}^\infty$ of $L^2(0,l)$, for the $j$th wedge, with different boundary conditions on the surfaces $\Gamma_{z_1}$ and $\Gamma_{z_2}$.\label{table 2.3}}
		\begin{tabular}{c|c|c}
			\toprule
			{$\Gamma_{z_2}\backslash \Gamma_{z_1}$} & Dirichlet & Neumann \\
			\midrule
			Dirichlet & \makecell{$Z_{j,n}(z)=\sin(\xi_{j,n} z)$,\\ $\xi_{j,n}=\frac{n\pi}{l}$, $n\in \mathbb{N}\cup\{0\}$} & 		\makecell{$ Z_{j,0}(z)=0$,\\[5pt]$ Z_{j,n}(z)=\cos(\xi_{j,n} z)$, \\[5pt] $\xi_{j,n}=(n-\tfrac{1}{2}) \tfrac{\pi}{l}$, $n\in \mathbb{N}$}    \\
			\midrule
			Neumann &
			\makecell{$ Z_{j,0}(z)=0$,\\[5pt]$ Z_{j,n}(z)=\sin(\xi_{j,n}z)$, \\[5pt] $\xi_{j,n}=(n-\tfrac{1}{2}) \tfrac{\pi}{l}$, $n\in\mathbb{N}$}
			&
			\makecell{$Z_{j,n}(z)=\cos(\xi_{j,n}z)$, \\ $\xi_{j,n}=\tfrac{n\pi}{l}$, $n\in \mathbb{N}\cup\{0\}$}\\
			\bottomrule
		\end{tabular}
	\end{threeparttable}
\end{table}

Now we assume that near each edge $\boldsymbol{v}_j \times(0,l) $, the domain $ \Omega $ coincides with a 3D wedge
$G_j=\{(x_j,y_j,z)\in\Omega:0<r_j<R_j,0<\theta_j<\omega_j,0<z<l\},$
where $ (r_j, \theta_j) $ are local polar coordinates (of the local Cartesian coordinates $ (x_j, y_j) $), cf. Fig. \ref{fig:sketch}(b) for a sketch. The functions $ u\in L^2(\Omega) $ and $ f\in L^2(\Omega) $ from problem \eqref{problem} can then be represented by the following convergent Fourier series in the 3D wedge $G_j$ (by suppressing the subscript $j$):
\begin{align}\label{fourier u 3d}
	u(x,y,z)&=\frac{1}{2}u_0(x,y)Z_0(z)+\sum_{n=1}^\infty u_n(x,y)Z_n(z),\\
\label{fourier f 3d}
	f(x,y,z)&=\frac{1}{2}f_0(x,y)Z_0(z)+\sum_{n=1}^\infty f_n(x,y)Z_n(z),
\end{align}
where $\{u_n\}_{n\in\mathbb{N}}$ and $\{f_n\}_{n\in\mathbb{N}}$ are defined on the 2D domain $\Omega_0$ by
\begin{align*}
    u_n(x,y)=\dfrac{2}{l}\int_0^lu(x,y,z)Z_n(z){\rm d}z
    \quad \mbox{and}\quad
 f_n(x,y)=\dfrac{2}{l}\int_0^lf(x,y,z)Z_n(z){\rm d}z .
\end{align*}
Substituting \eqref{fourier u 3d} and \eqref{fourier f 3d} into \eqref{problem} gives countably many 2D elliptic problems:
\begin{align}\label{3d:fourier equation}
	\left\{\begin{aligned}
		-\Delta u_n+\xi_{j,n}^2u_n&=f_n,&& \mbox{in }\Omega_0,\\
		u_n&=0,&& \mbox{on }\Gamma_D,\\
		\partial_n u_n&=0,&& \mbox{on }\Gamma_N.
	\end{aligned}\right.
\end{align}

Then problem \eqref{problem} can be analyzed via the 2D problems. Below we describe the edge behavior of the weak solution $ u\in H^1(\Omega) $ \cite[Theorem 2.1]{nkemzi2021singular}. The next theorem gives a crucial decomposition of $u\in H^{1+\frac{\pi}{\omega^*}-\epsilon}$ for every $\epsilon>0$. The functions $ \Psi_{j,i} $ are the so-called edge flux intensity functions.
\begin{theorem}\label{theorem:3d split}
For any fixed $f\in L^2(\Omega)$, let $u\in H^1(\Omega)$ be the unique weak solution to  problem \eqref{problem}. Then there exist unique functions $\Psi_{j,i}\in H^{1-\lambda_{j,i}}(0,l)$ of the variable $z$ such that $ u $ can be split into a sum of a regular part $ w \in H^2(\Omega)$ and a singular part $ S $ with the following properties:
\begin{subequations}
    \begin{align}\label{3dsplit}
		u&=w+S,\quad S=\sum_{j=1}^M\sum_{i\in\mathbb{I}_j}S_{j,i}(x_j,y_j,z),\\
        \label{3d singular fun}
        	S_{j,i}(x_j,y_j,z)&=(T_j(r_j,z)*\Psi_{j,i}(z))\eta_{\rho_j}(r_j)s_{j,i}(r_j,\theta_j),
        \end{align}
	\end{subequations}
where the functions $ T_j $ are fixed Poisson's kernels,  and the symbol $ * $ denotes the convolution in $ z\in(0,l) $. Moreover, there exists a constant $ C >0  $ independent of
$ f\in L^2(\Omega) $ such that
$\|w\|_{H^2(\Omega)} \leq C\|f\|_{L^2(\Omega)}$.
\end{theorem}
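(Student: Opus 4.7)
The plan is to reduce the 3D edge problem to a family of 2D corner problems via the partial Fourier series \eqref{fourier u 3d}--\eqref{fourier f 3d} and then lift the 2D splittings of Section \ref{2d:singular repre} back to $\Omega$. Concretely, for each $n$ the coefficient $u_n$ solves the 2D problem \eqref{3d:fourier equation} on $\Omega_0$ with source $f_n\in L^2(\Omega_0)$. Since the principal part is the Laplacian and the lower-order term $\xi_{j,n}^2 u_n$ is an $L^2$ perturbation, the corner-singularity theory \cite{grisvard2011elliptic,DaugeM,cai2001finite} invoked in \eqref{split}--\eqref{eqn:2D-apriori} yields a splitting
\begin{equation*}
   u_n \;=\; w_n \;+\; \sum_{j=1}^M\sum_{i\in\mathbb{I}_j}\gamma_{j,i}^{(n)}\,\eta_{\rho_j}(r_j)\,s_{j,i}(r_j,\theta_j),
\end{equation*}
with $w_n\in H^2(\Omega_0)\cap V$ (where $V$ encodes the mixed boundary conditions) and with the stress intensity factors $\gamma_{j,i}^{(n)}$ obtained from the extraction formula \eqref{eqn:extraction} applied to \eqref{3d:fourier equation}.

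Next I would reassemble the Fourier series. Putting the $w_n$ together gives a candidate regular part $w(x,y,z) = \tfrac12 w_0 Z_0(z) + \sum_{n\geq 1} w_n(x,y) Z_n(z)$ and, for each corner index $(j,i)$, one obtains a formal singular term $\eta_{\rho_j}(r_j) s_{j,i}(r_j,\theta_j)\, \Phi_{j,i}(r_j,z)$, where $\Phi_{j,i}(r_j,z)$ absorbs the $n$-dependent contributions from the lower-order term. The crucial observation is that near the edge the $r$-dependence of the stress intensity coefficients stemming from the $\xi_{j,n}^2 u_n$ term can be written, using the explicit fundamental solution of $(-\partial_r^2-r^{-1}\partial_r+\lambda_{j,i}^2 r^{-2}+\xi^2)$ in the Mellin/Fourier variable $\xi$, as a convolution in $z$ against a kernel $T_j(r_j,z)$ of Poisson type. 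Taking the inverse transform and collecting terms, one identifies the edge flux intensity function $\Psi_{j,i}(z)$ (encoding the $z$-trace of the intensity factors) so that the singular contribution acquires precisely the form \eqref{3d singular fun}.

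The regularity claims then reduce to two series estimates. For the trace regularity $\Psi_{j,i}\in H^{1-\lambda_{j,i}}(0,l)$, I would verify that the Fourier coefficients of $\Psi_{j,i}$ with respect to $\{Z_{j,n}\}$ decay like $|\gamma_{j,i}^{(n)}|$, and then use \eqref{eqn:2D-apriori} applied to \eqref{3d:fourier equation} together with the fact that $\sum_n \xi_{j,n}^{2(1-\lambda_{j,i})}\|f_n\|_{L^2(\Omega_0)}^2 \lesssim \|f\|_{L^2(\Omega)}^2$ whenever $f\in L^2(\Omega)$ (which follows from Parseval on the $Z_{j,n}$ basis). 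For the $H^2$ bound on $w$, the same style of computation yields $\sum_n \|w_n\|_{H^2(\Omega_0)}^2 (1+\xi_{j,n}^4) \lesssim \|f\|_{L^2(\Omega)}^2$, which together with Parseval gives $w\in H^2(\Omega)$ with the asserted constant. Uniqueness of $(w,\Psi_{j,i})$ follows from the uniqueness of the 2D splitting combined with the completeness of $\{Z_{j,n}\}$ in $L^2(0,l)$.

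\emph{Main obstacle.} The routine part is the mode-by-mode 2D splitting; the real work is showing that the $z$-dependent stress intensity coefficients aggregate into a \emph{convolution} against a fixed Poisson-type kernel rather than an abstract $n$-dependent series, and doing so while tracking the sharp regularity index $1-\lambda_{j,i}$ for $\Psi_{j,i}$. This requires identifying $T_j$ as the kernel of the parameter-dependent corner Green's function for $-\Delta+\xi^2$ in the Mellin variable conjugate to $\log r_j$, and carefully estimating remainder terms uniformly in the Fourier frequency $\xi_{j,n}$; the endpoint Sobolev index makes these estimates delicate and prevents a naive Hilbert-sum argument.
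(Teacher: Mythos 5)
The paper does not prove this theorem; it is quoted verbatim from \cite[Theorem~2.1]{nkemzi2021singular}, with the companion Theorem~\ref{theorem: 3dsplit2} likewise cited from \cite[Theorem~2.2]{nkemzi2021singular} and \cite{2012Flux}. So there is no in-paper proof to compare against, but your sketch does follow the strategy used in that literature (partial Fourier reduction to 2D corner problems, mode-by-mode splitting, reassembly), and it is worth assessing on its own terms.

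There is one genuine gap in the argument. You assert that the regularity $\Psi_{j,i}\in H^{1-\lambda_{j,i}}(0,l)$ follows because $\sum_n \xi_{j,n}^{2(1-\lambda_{j,i})}\|f_n\|_{L^2(\Omega_0)}^2 \lesssim \|f\|_{L^2(\Omega)}^2$ ``whenever $f\in L^2(\Omega)$''. That inequality is false: since $\lambda_{j,i}<1$, the exponent $2(1-\lambda_{j,i})$ is positive, and Parseval on the $Z_{j,n}$ basis only gives $\sum_n\|f_n\|^2 \lesssim \|f\|^2$. Requiring the weighted sum to be finite would impose additional $z$-regularity on $f$, which the theorem does not assume. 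The mechanism that actually produces the correct decay is on the stress-intensity side, not the source side: the $n$-th modal 2D problem is $(-\Delta+\xi_{j,n}^2)u_n=f_n$, and for this \emph{modified} Helmholtz operator the corner coefficient satisfies a $\xi$-uniform gain $|\gamma_{j,i,n}|\lesssim \xi_{j,n}^{-(1-\lambda_{j,i})}\|f_n\|_{L^2(\Omega_0)}$. Summing then gives exactly the bound $\sum_n\xi_{j,n}^{2(1-\lambda_{j,i})}|\gamma_{j,i,n}|^2\lesssim\|f\|^2$ stated in Theorem~\ref{theorem: 3dsplit2}, which is what $\Psi_{j,i}\in H^{1-\lambda_{j,i}}(0,l)$ means. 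You cannot reach this via the naive Parseval route you propose.

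Relatedly, treating $\xi_{j,n}^2 u_n$ as an ``$L^2$ perturbation'' and applying the pure-Laplacian splitting \eqref{split}--\eqref{eqn:2D-apriori} mode by mode loses uniformity as $n\to\infty$. The correct per-mode singular function is the modified Helmholtz one, $e^{-\xi_{j,n}r_j}\eta_{\rho_j}s_{j,i}$ (cf.\ the 2D Helmholtz splitting in the paper's supplement), and the $H^2$ bound on each $w_n$ must be established with explicit $\xi_{j,n}$-dependence, $|w_n|_{H^2(\Omega_0)}+\xi_{j,n}|w_n|_{H^1(\Omega_0)}+\xi_{j,n}^2\|w_n\|_{L^2(\Omega_0)}\lesssim\|f_n\|_{L^2(\Omega_0)}$, before $w\in H^2(\Omega)$ can be concluded from the anisotropic Parseval identity. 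You do ultimately invoke the Poisson-kernel structure, so you are aware of the $\xi$-dependence, but your opening reduction presents it as a secondary correction rather than as the ingredient that makes the modal estimates uniform; as written the sketch does not establish the claimed regularity of either $w$ or $\Psi_{j,i}$.

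Finally, you are right that the argument for $w\in H^2(\Omega)$ and the convolution/Poisson-kernel identification is the genuinely hard content; the paper sidesteps this entirely by citation, and a complete proof along your lines would have to carry out the $\xi$-uniform Mellin/corner analysis from \cite{nkemzi2021singular} and \cite{2012Flux} in detail.
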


The explicit form of the function $ T_j(r_j,z)*\Psi_{j,i}(z) $ and the formula for the coefficients $\gamma_{j,i,n}$ are given below \cite[Theorem 2.2]{nkemzi2021singular} \cite[pp. 179--182]{2012Flux}.
\begin{theorem}\label{theorem: 3dsplit2}
The coefficients $ \Phi_{j,i}(x_j,y_j,z)=T_j(r_j,z)*\Psi_{j,i}(z) $ of the singularities in \eqref{3d singular fun} can be represented by Fourier series in $ z $ and with respect to the orthogonal system $ \{Z_{n}(z) \}_{n=0}^\infty $:
\begin{align}
  T_j(r_j,z)&=\dfrac{1}{2}Z_0(z)+\sum_{n=1}^\infty{\rm e}^{-\xi_{j,n}r_j}Z_n(z),\nonumber\\
  \Psi_{j,i}(z)&=\dfrac{1}{2}\gamma_{j,i,0}Z_0(z)+\sum_{n=1}^\infty\gamma_{j,i,n}Z_n(z),\nonumber\\
  \Phi_{j,i}(x_j,y_j,z)&=\dfrac{1}{2}\gamma_{j,i,0}Z_0(z)+\sum_{n=1}^\infty\gamma_{j,i,n}{\rm e}^{-\xi_{j,n}r_j}Z_n(z),\label{expansion:phi}
\end{align}
where $ \gamma_{j,i,n} $ are given by
$ \gamma_{j,i,n}=\frac{2}{l\omega_j\lambda_{j,i}}\int_{G_j}f_j^*{\rm e}^{\xi_{j,n}r_j}s_{j,-k}(r_j,\theta_j)Z_n(z){\rm d}x{\rm d}y{\rm d}z$, and
$f_j^*=f\eta_{\rho_j}-u(\frac{\partial^2\eta_{\rho_j}}{\partial r_j^2}+(2\xi_{j,n}+\frac{1}{r_j})\frac{\partial\eta_{\rho_j}}{\partial r_j}+(2\xi_{j,n}^2+\frac{\xi_{j,n}}{r_j})\eta_{\rho_j})-2\frac{\partial u}{\partial r_j}(\frac{\partial \eta_{\rho_j}}{\partial r_j}+\xi_{j,n}\eta_{\rho_j}).$
Moreover there exists a constant $ C>0 $ independent of $ f $ such that
\begin{equation}
|\gamma_{j,i,0}|^2+\sum_{n=1}^\infty\xi_{j,n}^{2\left(1-\lambda_{j,i}\right)}|\gamma_{j,i,n}|^2\leq C\|f\|_{L^2(G_j)}.
\end{equation}
\end{theorem}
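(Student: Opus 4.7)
The plan is to reduce the 3D problem to a countable family of 2D modified Helmholtz problems via Fourier expansion in $z$, apply the 2D singular function decomposition (as in Section \ref{2d:singular repre}) to each mode $u_n$, and then re-assemble the coefficient function $\Phi_{j,i}$ from the modal stress intensity factors $\gamma_{j,i,n}$.

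First, I would expand $u$ and $f$ as in \eqref{fourier u 3d}--\eqref{fourier f 3d} using the orthogonal basis $\{Z_{j,n}\}_{n\ge 0}$ of $L^2(0,l)$ from Table \ref{table 2.3}, which is chosen so that the modes $u_n$, $f_n$ inherit the boundary conditions on $\Gamma_{z_1}\cup\Gamma_{z_2}$ termwise. Substituting into \eqref{problem} produces the modal system \eqref{3d:fourier equation}. Near each vertex $\boldsymbol{v}_j$ of $\Omega_0$, separation of variables for the modified Helmholtz operator $-\Delta + \xi_{j,n}^2$ produces homogeneous solutions $I_{\lambda_{j,i}}(\xi_{j,n}r_j)\phi_{j,i}(\theta_j)$, whose leading behavior as $r_j\downarrow 0$ is a scalar multiple of $r_j^{\lambda_{j,i}}\phi_{j,i}(\theta_j)$. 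A convenient normalization of the singular profile that simultaneously encodes the exponential decay due to $\xi_{j,n}^2$ is $\mathrm{e}^{-\xi_{j,n} r_j}s_{j,i}(r_j,\theta_j)$. Consequently, the 2D singular function decomposition of each $u_n$ takes the form
\begin{equation*}
u_n = w_n + \sum_{j=1}^M\sum_{i\in\mathbb{I}_j}\gamma_{j,i,n}\,\mathrm{e}^{-\xi_{j,n} r_j}\eta_{\rho_j}(r_j)s_{j,i}(r_j,\theta_j),\qquad w_n\in H^2(\Omega_0).
\end{equation*}

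Second, summing these decompositions via \eqref{fourier u 3d} and comparing with \eqref{3dsplit}--\eqref{3d singular fun} identifies, for each $(j,i)$, the coefficient function as
\begin{equation*}
\Phi_{j,i}(x_j,y_j,z)=\tfrac12\gamma_{j,i,0}Z_0(z)+\sum_{n=1}^\infty\gamma_{j,i,n}\mathrm{e}^{-\xi_{j,n}r_j}Z_n(z),
\end{equation*}
which is precisely \eqref{expansion:phi} and confirms the kernel $T_j(r_j,z)=\tfrac12 Z_0(z)+\sum_{n\ge 1}\mathrm{e}^{-\xi_{j,n}r_j}Z_n(z)$ and the edge flux density $\Psi_{j,i}(z)=\tfrac12\gamma_{j,i,0}Z_0(z)+\sum_{n\ge 1}\gamma_{j,i,n}Z_n(z)$. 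The regular part is then $w=\tfrac12 w_0 Z_0+\sum_{n\ge 1}w_n Z_n$, whose $H^2(\Omega)$-regularity is guaranteed by Theorem \ref{theorem:3d split}.

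Third, the closed-form expression for $\gamma_{j,i,n}$ is derived via the modified-Helmholtz analogue of the 2D extraction formula \eqref{eqn:extraction}. The dual singular function for the operator $-\Delta+\xi_{j,n}^2$ at the vertex $\boldsymbol{v}_j$ is $\mathrm{e}^{\xi_{j,n}r_j}s_{j,-i}(r_j,\theta_j)$. Testing \eqref{3d:fourier equation} against $\eta_{\rho_j}\mathrm{e}^{\xi_{j,n}r_j}s_{j,-i}$, integrating by parts twice over $\Omega_0\cap\{r_j<R_j\}$ and using the singularities of $s_{j,\pm i}$ to pick out the $\gamma_{j,i,n}$ residue yields the stated formula after computing $\Delta(\eta_{\rho_j}\mathrm{e}^{\xi_{j,n}r_j}s_{j,-i})$ in polar coordinates; the cross terms produced by $\partial_{r_j}\eta_{\rho_j}$ and the $\xi_{j,n}$-dependent factors from $\mathrm{e}^{\xi_{j,n}r_j}$ collect into exactly the definition of $f_j^*$. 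Integrating in $z$ against $Z_n(z)$ and using orthonormality produces the triple integral over $G_j$.

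Fourth, for the weighted bound, the key observation is the Parseval characterization
\begin{equation*}
\|\Psi_{j,i}\|_{H^{1-\lambda_{j,i}}(0,l)}^2\asymp |\gamma_{j,i,0}|^2+\sum_{n=1}^\infty \xi_{j,n}^{2(1-\lambda_{j,i})}|\gamma_{j,i,n}|^2,
\end{equation*}
which follows from the fact that $\{Z_n\}_{n\ge 0}$ diagonalizes a self-adjoint second-order operator on $(0,l)$ with eigenvalues $\xi_{j,n}^2$, so that fractional Sobolev norms are equivalent to weighted $\ell^2$ norms of Fourier coefficients. Combining this equivalence with $\|\Psi_{j,i}\|_{H^{1-\lambda_{j,i}}(0,l)}\le C\|f\|_{L^2(\Omega)}$ (itself a consequence of the a priori estimate in Theorem \ref{theorem:3d split} together with $\|w\|_{H^2(\Omega)}\le C\|f\|_{L^2(\Omega)}$) and localizing to $G_j$ yields the claimed bound.

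The main obstacle I anticipate is justifying the mode-by-mode singularity decomposition for the modified Helmholtz operator with the precise exponential weight $\mathrm{e}^{-\xi_{j,n} r_j}$, together with the termwise differentiations required to interchange the summation in $n$ with the Laplacian when verifying the 3D identity. Uniformity in $n$ of the constants in the 2D decomposition (so that the series for $w$ converges in $H^2$) and the convergence of $\sum_n \mathrm{e}^{-\xi_{j,n}r_j}Z_n(z)$ near $r_j=0$ require careful tail estimates leveraging the exponential factor; these are the technical heart of the argument.
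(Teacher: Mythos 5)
Your outline is correct and follows essentially the same route as the proof the paper relies on: the paper does not prove this theorem itself but imports it from the cited references (Nkemzi's Theorem 2.2 and the flux-intensity monograph), and that argument proceeds exactly as you describe --- Fourier reduction in $z$ to the modal modified Helmholtz problems \eqref{3d:fourier equation}, a mode-wise 2D singular decomposition with profile $\mathrm{e}^{-\xi_{j,n}r_j}s_{j,i}$ (matching the modified-Bessel asymptotics $I_{\lambda_{j,i}}(\xi_{j,n}r_j)\sim c\,r_j^{\lambda_{j,i}}$), extraction of $\gamma_{j,i,n}$ by testing against the dual function $\eta_{\rho_j}\mathrm{e}^{\xi_{j,n}r_j}s_{j,-i}$, and the equivalence of the $H^{1-\lambda_{j,i}}(0,l)$ norm of $\Psi_{j,i}$ with the weighted $\ell^2$ sum for the final bound. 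The two technical points you flag at the end --- uniformity in $n$ of the modal decomposition constants and the justification of termwise differentiation of the series near $r_j=0$ --- are precisely where the cited proof does its real work, so your plan is sound but would need those estimates carried out to be complete.
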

Note that the formula for $\gamma_{j,i,n}$ involves a singular integral and numerically inconvenient to evaluate. We discuss the case of only one edge with one singular function below. Then the solution $u$ can be split into
\begin{equation}\label{split:3d}
	u=w+\Phi\eta_\rho s .
\end{equation}
Since the functions $\Phi\eta_\rho s$ and $\partial_n(\Phi\eta_\rho s)$ vanish on $\Gamma_D$ and $\Gamma_N$, respectively, $w$ solves
\begin{align}\label{3d modified}
	\left\{\begin{aligned}
		-\Delta w &= f + \Delta (\Phi\eta_\rho s),&& \mbox{in }\Omega,\\
		w&=0,&& \mbox{on }\Gamma_D,\\
		\partial_n w&=0,&& \mbox{on }\Gamma_N.
	\end{aligned}\right.
\end{align}
This forms the basis of SEPINN for 3D problems with edge singularities. Below we describe two strategies for constructing SEPINN approximations, i.e., SEPINN-C based on a cutoff approximation of the infinite series and SEPINN-N based on multiple DNN approximations.

\subsubsection{SEPINN -- Cutoff approximation}
The expansion \eqref{expansion:phi} of $\Phi$ in \eqref{3d modified} involves infinitely many unknown scalar coefficients $\{\gamma_{j,i,n}\}_{n=0}^\infty$. In practice, it is infeasible to learn all of them. However, since the series is convergent, we may truncate it to a finite number of terms: the edge flux intensity function $ \Phi_{j,i}(x_j,y_j,z) $ from  \eqref{expansion:phi} is approximated by the truncation
\begin{equation}\label{expansion}
	\Phi_{j,i}^N(x_j,y_j,z)=\dfrac{1}{2}\gamma_{j,i,0} Z_0(z)+\sum_{n=1}^N\gamma_{j,i,n}{\rm e}^{-\xi_{j,n}r_j}Z_n(z), \quad \mbox{with } N\in \mathbb{N}.
\end{equation}
The approximate singular function $S_{j,i}^N$ is given by
\begin{equation}\label{truncating}
	S_{j,i}^N(x_j,y_j,z)=\Phi_{j,i}^N(x_j,y_j,z)\eta_{\rho_j}(r_j)s_{j,i}(r_j,\theta_j).
\end{equation}
In view of the splitting \eqref{3dsplit} and truncation \eqref{truncating}, the approximate regular part $w$ satisfies
with $\tilde f = f + \gamma_0\Delta(\eta_\rho s) + \sum_{n=1}^N \gamma_n\Delta({\rm e}^{-\xi_n r}Z_n(z)\eta_\rho s)$,
\begin{align}\label{3d modif pro}
	\left\{\begin{aligned}
		-\Delta w &= \tilde f,&& \mbox{in }\Omega,\\
		w&=0,&& \mbox{on }\Gamma_D,\\
		\partial_nw&=0,&& \mbox{on }\Gamma_N,
	\end{aligned}\right.
\end{align}
with $\gamma_i, i=0,1,\cdots,N$, being $N+1$ unknown parameters. Let $w^N$ be the solution of \eqref{3d modif pro} and $w^*$ the solution of \eqref{3d modified}. The next result shows that when $N$ is large enough, the error $w^N-w^*$ can be made small, and so is the error between $u^N$ and $u^*$, which underpins the truncation method.
\begin{theorem}\label{theorem:trun}
Let	$w^N$ be the solution of \eqref{3d modif pro} and $S^N$ be the truncated singular function. Let $u^N=w^N+S^N$ and let $u^*$ be the solution of \eqref{problem}. Then there holds
\begin{equation}
		\|u^N-u^*\|_H^1( \Omega)\leq CN^{-1}\|f\|_{L^2(G)}.
\end{equation}
\end{theorem}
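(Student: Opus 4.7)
The plan is to split the error into the contribution from truncating the series for $\Phi$ and the induced change in the regular part, and to show that each is $O(N^{-1})$ in $H^1$ using the weighted summability of the edge coefficients $\{\gamma_{j,i,n}\}$ from Theorem~\ref{theorem: 3dsplit2}. Concretely, I would write $u^N - u^* = (w^N - w^*) + (S^N - S^*)$, with $S^* := \Phi\eta s$ the full singular part and $S^N := \Phi^N\eta s$ its truncation, so that (suppressing the indices $(j,i)$) the tail has the product form
\[
S^* - S^N \;=\; \eta(r)\,s(r,\theta)\sum_{n>N}\gamma_n \rme^{-\xi_n r} Z_n(z),
\]
where $s(r,\theta)\sim r^\lambda$ near the edge.

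The heart of the proof is an $H^1$ bound on this tail. I would use $L^2(0,l)$-orthogonality of $\{Z_n\}$ to decouple the sum after squaring and integrating in $z$, then integrate in $(r,\theta)$ over the wedge $G$ using the scaling identity $\int_0^R r^{2\lambda+1}\rme^{-2\xi_n r}\rmd r\le C\xi_n^{-(2\lambda+2)}$, obtained by the change of variables $\tau = 2\xi_n r$. Inserting the weighted summability of Theorem~\ref{theorem: 3dsplit2} gives
\[
\|S^*-S^N\|_{L^2(\Omega)}^2 \;\le\; C\sum_{n>N}|\gamma_n|^2\xi_n^{-(2\lambda+2)} \;\le\; C\xi_{N+1}^{-4}\sum_{n>N}\xi_n^{2(1-\lambda)}|\gamma_n|^2 \;\le\; CN^{-4}\|f\|_{L^2(G)}^2.
\]
Each derivative $\partial_z$ or $\partial_r$ brings an extra factor $\xi_n$ (from $Z_n'$ or $\partial_r\rme^{-\xi_n r}$) that costs two orders in the radial integration, so by the same computation $\|\nabla(S^*-S^N)\|_{L^2(\Omega)}\le CN^{-1}\|f\|_{L^2(G)}$, and therefore $\|S^*-S^N\|_{H^1(\Omega)}\le CN^{-1}\|f\|_{L^2(G)}$.

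For the regular part, $e_w := w^N - w^*$ solves $-\Delta e_w = \Delta(S^N - S^*)$ in $\Omega$ with $e_w = 0$ on $\Gamma_D$ and $\partial_n e_w = 0$ on $\Gamma_N$. Testing with $e_w$ and integrating by parts (using that $e_w$ and $S^N-S^*$ vanish on $\Gamma_D$ and that $\partial_n e_w$ and $\partial_n(S^N-S^*)$ vanish on $\Gamma_N$), one gets $\|\nabla e_w\|_{L^2}^2 = -\int_\Omega \nabla(S^N-S^*)\cdot\nabla e_w\,\rmd x$, hence $\|\nabla e_w\|_{L^2}\le \|\nabla(S^N-S^*)\|_{L^2}$. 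Combined with Poincar\'e's inequality (since $|\Gamma_D|>0$), this yields $\|e_w\|_{H^1(\Omega)}\le CN^{-1}\|f\|_{L^2(G)}$, and the triangle inequality delivers $\|u^N-u^*\|_{H^1(\Omega)}\le CN^{-1}\|f\|_{L^2(G)}$.

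The main obstacle is the $H^1$ bound on the Fourier tail. The weighted summability supplied by Theorem~\ref{theorem: 3dsplit2} carries only the weight $\xi_n^{2(1-\lambda)}$ with $\lambda<1$, whereas each spatial derivative of the tail produces a bare factor $\xi_n$ outside the sum. To recover the missing powers of $\xi_n$ one must exploit the joint effect of the exponential decay $\rme^{-\xi_n r}$ and the vanishing $s\sim r^\lambda$ near the edge, which together make the radial integrals decay like $\xi_n^{-(2\lambda+2)}$; the balance is precisely what yields the rate $N^{-1}$ rather than something slower. A secondary point to check is that transferring the local estimate on the wedge $G$ to all of $\Omega$ via the cutoff $\eta$ does not degrade this rate, and that the terms arising from derivatives of $\eta$ (which are supported away from the edge where $\Phi-\Phi^N$ is smooth) are in fact smaller than the dominant contribution.
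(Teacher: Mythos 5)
Your proposal is correct and follows essentially the same route as the paper: the same decomposition $u^N-u^*=(w^N-w^*)+(S^N-S^*)$, an $H^1$ bound on the truncated singular tail, and an energy/duality argument showing $\|w^N-w^*\|_{H^1(\Omega)}\le C\|\nabla(S^N-S^*)\|_{L^2(\Omega)}$ (your integration by parts against $e_w$ is equivalent to the paper's bound via the $H^{-1}_{\Gamma_D}(\Omega)$ norm of $\Delta(S^N-S^*)$). The one difference is that the paper simply cites the tail estimate $\|S^N-S^*\|_{H^1(G)}\le CN^{-1}\|f\|_{L^2(G)}$ from \cite[Lemma 3.3]{nkemzi2021singular}, whereas you prove it directly from the weighted summability of Theorem \ref{theorem: 3dsplit2} via orthogonality in $z$ and the radial scaling $\int_0^R r^{2\lambda+1}{\rm e}^{-2\xi_n r}\,{\rm d}r\le C\xi_n^{-(2\lambda+2)}$; your bookkeeping of the powers of $\xi_n$ (losing $\xi_n^2$ per derivative against a gain of $\xi_n^{-(2\lambda+2)}$ from the radial integral, balanced against the available weight $\xi_n^{2(1-\lambda)}$) is sound and makes the argument self-contained.
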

\begin{proof}
By \cite[Lemma 3.3]{nkemzi2021singular}, there exists a constant $ C >0 $ independent of $f$ such that
\begin{equation}\label{inequality:s}
\|S^N-S^*\|_{H^1(G)}\leq CN^{-1}\|f\|_{L^2(G)}.
\end{equation}
Since $w^N$ solves \eqref{3d modif pro} and $w^*$ solves \eqref{3d modified}, $w^*-w^N$ satisfies problem \eqref{problem}
with the source $f^N=\sum_{n=N+1}^\infty \gamma_n\Delta({\rm e}^{-\xi_n r}Z_n(z)\eta_\rho s)$. By elliptic regularity theory, we have
\begin{align}		
\|w^*-w^N\|_{H^1(\Omega)}\leq C\|f^N\|_{H_{\Gamma_D}^{-1}(\Omega)}
		\leq C\|\Delta (S^N-S^*)\|_{H_{\Gamma_D}^{-1}(\Omega)},
\end{align}
where the notation $H_{\Gamma_D}^{-1}(\Omega)$ denotes the dual space of $H_{\Gamma_D}^1(\Omega)=\{v\in H^1(\Omega): v=0 \mbox{ on }\Gamma_D\}$. Upon integration by parts, we get
\begin{align*}
		&\|\Delta (S^N-S^*)\|_{H_{\Gamma_D}^{-1}(\Omega)}=\sup_{v\in H_{\Gamma_D}^1(\Omega), \|v\|_{H^1(\Omega)}\leq 1}\left|\int_\Omega\Delta (S^N-S^*)v{\rm d}\mathbf{x}\right|\\
		=&\sup_{v\in H_{\Gamma_D}^1(\Omega),\|v\|_{H^1(\Omega)}\leq 1}\left|\int_\Omega\nabla (S^N-S^*)\cdot\nabla v{\rm d} {x}\right|
		\leq\|S^N-S^*\|_{H^1(\Omega)}
        =\|S^N-S^*\|_{H^1(G)}.
\end{align*}
Combining this estimate with \eqref{inequality:s} yields the desired assertion.
\end{proof}

Following Section \ref{ssec:pinn}, we approximate  $w^N$ by an element $w_\theta \in \mathcal{A}$. Like in the 2D case, we view the parameters $\boldsymbol\gamma_N:=( \gamma_0, \ldots,\gamma_N)$ as trainable parameters and learn them along with DNN parameters $\theta$.
Thus, we employ the following empirical loss
\begin{equation*}
	\begin{aligned}
		\widehat{\mathcal{L}}_{\boldsymbol\sigma}(w_\theta;\boldsymbol{\gamma}_N)\!\!=\frac{|\Omega|}{N_r}\!\sum_{i=1}^{N_r}\!(\Delta w_\theta(X_i)\!\!+\!\!\tilde f(X_i))^2\!+\!\sigma_d\frac{|\Gamma_D|}{N_d}\!\sum_{j=1}^{N_d}\!w_\theta^2(Y_j)\!\!+\!\!\sigma_n\frac{|\Gamma_N|}{N_n}\!\sum_{k=1}^{N_n}\!\!\left({\partial_n w_\theta}(Z_k)\right)^2,
	\end{aligned}
\end{equation*}
with i.i.d. sampling points $\{X_i\}_{i=1}^{N_r}\sim U(\Omega)$, $\{Y_j\}_{j=1}^{N_d}\sim U(\Gamma_D)$ and $\{Z_k\}_{k=1}^{N_n}\sim U(\Gamma_N)$. The empirical loss $\widehat{\mathcal{L}}_{\boldsymbol\sigma}(w_\theta;\boldsymbol{\gamma}_N) $ can be minimized using the
two-stage procedure as in the 2D case.

\subsubsection{SEPINN -- Neural networks approximation}
There is actually a direct way to resolve the singular part $S$, i.e., using a DNN to approximate $\Phi$ in \eqref{3dsplit}. We term the resulting method SEPINN-N. This strategy eliminates the necessity of explicitly knowing the expansion basis and relieves us from lengthy derivations. Thus, compared with SEPINN-C, it is more direct and simpler to implement. The downside is an increase in the number of parameters that need to be learned. Specifically, let $\mathcal{B}$ to be a DNN function class with a fixed architecture (possibly different from $\mathcal{A}$) and $\zeta$ its parameterization.  Then a DNN $\Phi_\zeta\in\mathcal{B}$ is employed to approximate $\Phi$ in \eqref{split:3d}, where the DNN parameters $\zeta$ are also learned. The splitting is then given by
\begin{equation}
	u=w+\Phi_\zeta\eta_\rho s.
\end{equation}
Since we cannot guarantee $\Phi_\zeta=0$ on $\Gamma_D$ or $\partial_n\Phi_\zeta=0$ on $\Gamma_N$, the boundary conditions of $w$ have to be modified accordingly (noting $ {\partial_n (\eta_\rho s)}=0 $):
\begin{align}
	\left\{\begin{aligned}
		-\Delta w &= f + \Delta (\Phi_\zeta\eta_\rho s),&& \mbox{in }\Omega,\\
		w&=-\Phi_\zeta\eta_\rho s,&& \mbox{on }\Gamma_D,\\
		\partial_n w&=-\partial_n(\Phi_\zeta)\eta_\rho s,&& \mbox{on }\Gamma_N.
	\end{aligned}\right.
\end{align}
Like before, we can obtain the following empirical loss
\begin{equation}\label{eqn2:loss-emp}
	\begin{aligned}
		&\widehat{\mathcal{L}}_{\boldsymbol\sigma}(w_\theta;\Phi_\zeta)=\dfrac{|\Omega|}{N_r}\sum_{i=1}^{N_r}\left(\Delta w_\theta(X_i)+ f(X_i) +\Delta(\Phi_\zeta\eta_\rho s)(X_i)\right)^2\\
  &+\sigma_d\dfrac{|\Gamma_D|}{N_d}\sum_{j=1}^{N_d}(w_\theta(Y_j)+\Phi_\zeta\eta_\rho s(Y_j))^2
		+\sigma_n \dfrac{|\Gamma_N|}{N_n}\sum_{k=1}^{N_n}\left(\partial_n w_\theta(Z_k)+\partial_n(\Phi_\zeta)\eta_\rho s(Z_k)\right)^2,
	\end{aligned}
\end{equation}
with i.i.d. sampling points $\{X_i\}_{i=1}^{N_r}\sim U(\Omega)$, $\{Y_j\}_{j=1}^{N_d}\sim U(\Gamma_D)$ and $\{Z_k\}_{k=1}^{N_n}\sim U(\Gamma_N)$.
The implementation of SEPINN-N is direct, since both DNNs $w_\theta$ and $\Psi_\zeta$ are learned, and the resulting optimization problem can be minimized directly.

\section{Error analysis}\label{sec:error}
Now we discuss the error analysis of SEPINN developed in Section \ref{sec:SEPINN}, following the strategies established in the recent works \cite{JiaoLai:2022cicp,LuChenLu:2021,HuJinZhou:2022}, in order to provide theoretical guarantee of SEPINN. We only analyze the 2D problem in Section \ref{2d domain}. Let $u^* = w^* + \gamma^* \eta_\rho s$ be the exact solution to problem \eqref{problem}, in which case $(w^*,\gamma^*)$ is also a global minimizer of the loss $\mathcal{L}_{\boldsymbol\sigma}(w,\gamma)$ (with a zero loss value).
Moreover, we assume that $w^*\in H^3(\Omega)$ and $|\gamma^*|\le B_\gamma$, cf. \eqref{eqn:2D-apriori}. The $H^3(\Omega)$ regularity of the smooth part $w^*$ is needed for controlling the error of approximating $w$ in the $H^2(\Omega)$ norm using DNNs. If the assumption does not hold, we can split out additional singular function(s); see the argument in Section \ref{2d:singular repre}. Several preliminary results used in the analysis are given in the appendix.

The following approximation property holds \cite[Proposition 4.8]{GuhringRaslan:2021}. The notation $(s=2)$ equals to 1 if $s=2$ and zero otherwise.
\begin{lemma}\label{lem:tanh-approx}
Let $s\in\mathbb{N}\cup\{0\}$ and $p\in[1,\infty]$ be fixed, and $v\in W^{k,p}(\Omega)$ with $k\geq  s+1$. Then for any tolerance $\epsilon>0$, there exists at least one $v_\theta$ of depth $\mathcal{O}\big(\log(d+k)\big)$, with $|\theta|_{\ell^0}$ bounded by $\mathcal{O}\big(\epsilon^{-\frac{d}{k-s-\mu (s=2)}}\big)$ and  $|\theta|_{\ell^\infty}$ by $\mathcal{O}(\epsilon^{-2-\frac{2(d/p+d+s+\mu(s=2))+d/p+d}{k-s-\mu (s=2)}})$, where $\mu>0$ is arbitrarily small, such that
\begin{equation*}
     \|v-v_\theta\|_{W^{s,p}(\Omega)} \leq \epsilon.
\end{equation*}
\end{lemma}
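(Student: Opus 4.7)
The plan is to follow the standard polynomial-approximation-plus-network-emulation scheme, adapted from Yarotsky's ReLU construction to the analytic tanh activation, as in Gühring--Raslan. The result is not elementary, so I would break the construction into four conceptually independent modules.

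First, I would perform a \emph{local polynomial approximation}. Tile $\Omega$ (assumed Lipschitz, say a cube for simplicity; the general case follows by a standard reflection / extension argument) with axis-aligned cubes $Q_i$ of sidelength $h\sim N^{-1}$, and on each $Q_i$ use the averaged Taylor polynomial of degree $k-1$ to obtain a polynomial $p_i$ with
\begin{equation*}
    \|v-p_i\|_{W^{s,p}(Q_i)} \lesssim h^{k-s}\,\|v\|_{W^{k,p}(\widetilde Q_i)},
\end{equation*}
by the Bramble--Hilbert lemma, where $\widetilde Q_i$ is a slightly enlarged patch. Summing over cubes gives a global error $\lesssim h^{k-s}$ in $W^{s,p}(\Omega)$. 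This fixes the target $h$ (hence $N$) in terms of the desired tolerance $\epsilon$.

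Second, I would build \emph{tanh emulators for multiplication and monomials}. Using the Taylor expansion of $\varrho(x)=\tanh(x)$ at $0$, the combination $x^{2}\approx \tfrac{1}{2\delta^{2}}(\varrho(\delta x)+\varrho(-\delta x)+2\varrho(0)\cdot 0\text{-bias})$ (appropriately centered) gives an approximation of $x^{2}$ with error tunable via $\delta$, and hence an approximation of $xy=\tfrac12((x+y)^{2}-x^{2}-y^{2})$. By iterating this two-input multiplier one obtains a tanh network $m_\alpha$ emulating the monomial $x^\alpha$ with $|\alpha|\le k-1$; the key quantitative statement is that the error of $m_\alpha$ can be made arbitrarily small in $W^{s,\infty}$, at the cost of larger inner weights. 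Combining the $m_\alpha$'s with coefficients of the local $p_i$ yields tanh emulators $\widetilde p_i$ of each local polynomial.

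Third, I would introduce a \emph{tanh-approximated partition of unity} $\{\widetilde\phi_i\}$ subordinate to $\{Q_i\}$. One takes a fixed smooth bump built from tanh (a translated/scaled difference of two sigmoids), approximates it to any required Sobolev accuracy, and scales/translates it per cube. The final approximant is
\begin{equation*}
    v_\theta \;=\; \sum_i \widetilde\phi_i \,\widetilde p_i,
\end{equation*}
assembled as a single tanh network by stacking sub-networks in parallel and summing outputs in the last linear layer. The $W^{s,p}$ error is split, via the Leibniz rule, into (a) polynomial approximation error from Step 1, (b) emulation error of $\widetilde p_i$ and $\widetilde\phi_i$ from Steps 2--3, and (c) cross terms controlled by the boundedness of the pieces. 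Equilibrating all contributions against $\epsilon$ pins down $\delta$ and the network size, and a careful bookkeeping of the number of inner multipliers and the magnitude of the weights produces exactly the stated depth $\mathcal{O}(\log(d+k))$, sparsity $\mathcal{O}(\epsilon^{-d/(k-s-\mu(s=2))})$, and weight bound $\mathcal{O}(\epsilon^{-2-\frac{2(d/p+d+s+\mu(s=2))+d/p+d}{k-s-\mu(s=2)}})$.

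The main obstacle is Step 2 in the case $s=2$. Controlling second derivatives of a product of approximate factors through the Leibniz rule requires uniform control of the emulator and two derivatives simultaneously; the $W^{2,\infty}$ error of the tanh-emulated multiplier only decays \emph{almost} at the desired rate, losing an arbitrarily small exponent $\mu>0$. This is precisely what forces the $\mu(s=2)$ term in both the sparsity and the weight-magnitude exponent. Once that quantitative multiplication lemma is in hand, the assembly into the full network, the error aggregation via Leibniz/triangle inequalities, and the bookkeeping of depth, sparsity and weights are routine.
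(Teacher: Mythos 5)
The paper does not prove this lemma at all: it is quoted verbatim, with constants, from \cite[Proposition 4.8]{GuhringRaslan:2021}, so there is no in-paper argument to compare against. Your outline is a faithful reconstruction of the proof strategy of that cited reference --- localization via averaged Taylor polynomials and the Bramble--Hilbert lemma, tanh emulation of products and monomials, an approximate partition of unity subordinate to the tiling, error aggregation through the Leibniz rule, and the correct attribution of the $\mu(s=2)$ loss to the $W^{2,\infty}$ control of the emulated multiplier --- so I would classify it as essentially the same approach as the source the paper relies on. One caution on the only explicit formula in your sketch: since $\varrho=\tanh$ is odd, the combination $\varrho(\delta x)+\varrho(-\delta x)$ vanishes identically and $\varrho''(0)=0$, so the square emulator must be built from a second-order central difference centered at a point $x_0$ with $\varrho''(x_0)\neq 0$ (or from the odd Taylor coefficients of $\tanh$, constructing odd monomials first); your parenthetical ``appropriately centered'' acknowledges this, but the displayed expression as written is degenerate.
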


For $\epsilon>0$, with $d=2$, $k=3$ and $s=2$, by Lemma \ref{lem:tanh-approx},  there exists a DNN
\begin{equation}\label{eqn:NN-set}
v_\theta \in \mathcal{N}_ \varrho(C, C\epsilon^{-\frac{2}{1-\mu}},C\epsilon^{-2-\frac{16+2\mu}{1-\mu}}) =: \mathcal{W}_\epsilon,
\end{equation}
such that $ \|w^*-v_\theta  \|_{H^2(\Omega)} \le \epsilon$.
Also let $I_\gamma = [-B_\gamma,B_\gamma]$. Let $(\widehat w_\theta,\widehat \gamma)$ be a minimizer of $\widehat{\mathcal{L}}_{\boldsymbol\sigma}(w_\theta,\gamma)$, cf. \eqref{2deqn:loss-emp} over $\mathcal{W}_\epsilon\times I_\gamma$, and set
$\widehat u = \widehat w_\theta + \widehat \gamma \eta_\rho s.$
The next lemma gives a decomposition of the error $\| u^* - \widehat u \|_{L^2(\Omega)}$.

\begin{lemma}\label{lem:err-decomp}
For any $\epsilon>0$,
let $(\widehat w_\theta, \widehat \gamma ) \in \mathcal{W}_\epsilon \times I_\gamma$ be a minimizer to the loss $\widehat{\mathcal{L}}_{\boldsymbol\sigma}(w_\theta,\gamma)$. Then there holds
\begin{equation*}
\begin{aligned}
&\| u^* - \widehat u \|_{L^2(\Omega)}^2  \le
c \min(\sigma_d, \sigma_n)^{-1}\mathcal{L}_{\boldsymbol\sigma}(\widehat w_\theta,\widehat \gamma) \\
 \le & c \min(\sigma_d, \sigma_n)^{-1} \Big((1+\sigma_d+\sigma_n)\epsilon^2
+ \sup_{(w_\theta,\gamma)\in \mathcal{W}_\epsilon\times I_\gamma} | \mathcal{L}_{\boldsymbol\sigma}( w_\theta, \gamma) - \widehat{\mathcal{L}}_{\boldsymbol\sigma}(w_\theta,  \gamma)  |\Big).
\end{aligned}
\end{equation*}
where the constant $c$ is independent of $\boldsymbol\sigma$.
\end{lemma}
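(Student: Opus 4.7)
The plan is to chain two independent arguments: a stability / coercivity bound that controls $\|u^*-\widehat u\|_{L^2(\Omega)}^2$ by the population loss, followed by a standard quasi-best approximation + generalization split to control the population loss at $(\widehat w_\theta,\widehat\gamma)$ by the approximation error and the uniform deviation $\sup|\mathcal{L}_{\boldsymbol\sigma}-\widehat{\mathcal{L}}_{\boldsymbol\sigma}|$.

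For the first inequality, write $e := u^*-\widehat u = (w^*-\widehat w_\theta)+(\gamma^*-\widehat\gamma)\eta_\rho s$. Because the cut-off $\eta_\rho$ vanishes near $\partial\Omega\setminus\{\bsx_j\}$ and, by construction (cf.\ Table \ref{tab:sing-2d}), $s$ itself vanishes on $\Gamma_D$ and has zero normal derivative on $\Gamma_N$ near the singular vertex, the singular enrichment $\gamma\eta_\rho s$ satisfies the homogeneous boundary conditions exactly. Hence $e$ is a solution of a Poisson problem with non-homogeneous data, whose interior residual equals $\Delta\widehat w_\theta+f+\widehat\gamma\Delta(\eta_\rho s)$, Dirichlet trace equals $-\widehat w_\theta|_{\Gamma_D}$, and Neumann trace equals $-\partial_n\widehat w_\theta|_{\Gamma_N}$. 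A standard mixed-boundary $L^2$-stability estimate (recalled in the appendix preliminaries) then yields
\begin{equation*}
\|e\|_{L^2(\Omega)}^2 \le c\Big(\|\Delta\widehat w_\theta+f+\widehat\gamma\Delta(\eta_\rho s)\|_{L^2(\Omega)}^2+\|\widehat w_\theta\|_{L^2(\Gamma_D)}^2+\|\partial_n\widehat w_\theta\|_{L^2(\Gamma_N)}^2\Big).
\end{equation*}
Matching terms with $\mathcal{L}_{\boldsymbol\sigma}(\widehat w_\theta,\widehat\gamma)$ and absorbing the $\sigma_d^{-1},\sigma_n^{-1}$ factors into $\min(\sigma_d,\sigma_n)^{-1}$ gives the first claimed inequality.

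For the second inequality, pick the network $v_\theta\in\mathcal{W}_\epsilon$ from Lemma \ref{lem:tanh-approx} with $\|w^*-v_\theta\|_{H^2(\Omega)}\le\epsilon$, and use $(v_\theta,\gamma^*)\in\mathcal{W}_\epsilon\times I_\gamma$ as a comparator. Writing $\Delta_{\mathrm{gen}}:=\sup_{(w_\theta,\gamma)\in\mathcal{W}_\epsilon\times I_\gamma}|\mathcal{L}_{\boldsymbol\sigma}(w_\theta,\gamma)-\widehat{\mathcal{L}}_{\boldsymbol\sigma}(w_\theta,\gamma)|$ and using the empirical optimality of $(\widehat w_\theta,\widehat\gamma)$, the standard chain
\begin{equation*}
\mathcal{L}_{\boldsymbol\sigma}(\widehat w_\theta,\widehat\gamma)\le\widehat{\mathcal{L}}_{\boldsymbol\sigma}(\widehat w_\theta,\widehat\gamma)+\Delta_{\mathrm{gen}}\le\widehat{\mathcal{L}}_{\boldsymbol\sigma}(v_\theta,\gamma^*)+\Delta_{\mathrm{gen}}\le \mathcal{L}_{\boldsymbol\sigma}(v_\theta,\gamma^*)+2\Delta_{\mathrm{gen}}
\end{equation*}
reduces the task to estimating $\mathcal{L}_{\boldsymbol\sigma}(v_\theta,\gamma^*)$. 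Since $w^*$ solves \eqref{modif pro} with parameter $\gamma^*$ and zero boundary data, the residual satisfies $\Delta v_\theta+f+\gamma^*\Delta(\eta_\rho s)=\Delta(v_\theta-w^*)$, so its $L^2$-norm is bounded by $\|v_\theta-w^*\|_{H^2(\Omega)}\le\epsilon$; the Dirichlet and Neumann trace terms equal $\|v_\theta-w^*\|_{L^2(\Gamma_D)}$ and $\|\partial_n(v_\theta-w^*)\|_{L^2(\Gamma_N)}$, each controlled by $C\|v_\theta-w^*\|_{H^2(\Omega)}\le C\epsilon$ via the trace theorem. Collecting the three contributions gives $\mathcal{L}_{\boldsymbol\sigma}(v_\theta,\gamma^*)\le C(1+\sigma_d+\sigma_n)\epsilon^2$, which combined with the first inequality completes the proof.

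The main delicate point is the mixed-boundary $L^2$-stability used in the first step: the estimate must allow the Dirichlet boundary data to sit in $L^2(\Gamma_D)$ (not $H^{1/2}$), so a direct application of the variational framework is not possible and one needs the very-weak formulation or a duality / lifting argument, as provided by the preliminary lemmas referenced in the appendix. The remainder is bookkeeping: ensuring that $\eta_\rho s$ respects the boundary conditions (so the splitting is consistent), that the approximation bound for $v_\theta$ is in $H^2$ (needed for both the Laplacian residual and the Neumann trace), and that all norms are aggregated with the correct $\sigma$-dependence.
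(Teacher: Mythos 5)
Your proposal is correct and follows essentially the same route as the paper: the first inequality via an $L^2$-stability estimate for the Poisson problem with $L^2$ boundary data (which the paper makes explicit by constructing a harmonic lifting $\zeta$ of the boundary traces, invoking the very-weak regularity bound of Berggren, and applying an energy argument to $e+\zeta$), and the second via the comparator $(v_\theta,\gamma^*)$ with the standard empirical-optimality chain and the trace theorem. The only cosmetic discrepancy is that the stability estimate you defer to is not in the appendix but is proved inline in the paper via that lifting construction.
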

\begin{proof}
For any $(w_\theta,\gamma) \in \mathcal{W}_\epsilon \times I_\gamma$,
let $  u = w_\theta + \gamma \eta_\rho s$, and denote the corresponding overall error by $e = u^*-  u$. By the trace theorem, we have
\begin{equation*}
\mathcal{L}_{\boldsymbol\sigma}(w_\theta,\gamma)   = \|  \Delta e \|_{L^2(\Omega)}^2
 + \sigma_d \| e \|_{L^2(\Gamma_D)}^2 + \sigma_n \| \partial_n e \|_{L^2(\Gamma_N)}^2 \le
 c  (1+\sigma_d + \sigma_n)  \| e \|_{H^2(\Omega)}^2.
\end{equation*}
To treat the nonzero boundary conditions of $w_\theta$, we define the extension $\zeta$ by
\begin{equation*}
\left\{\begin{aligned}-\Delta \zeta &=0 , &&\text {in } \Omega,\\
\zeta &=w_\theta , &&\text {on } \Gamma_D,\\
\partial_n \zeta &= \partial_n w_\theta  ,& &\text {on } \Gamma_N.
\end{aligned}\right.
\end{equation*}
Then the following elliptic regularity estimate holds \cite[Theorem 4.2, p. 870]{Berggren:2004}
\begin{equation}\label{eqn:stab-hm}
\|  \zeta  \|_{L^2(\Omega)} \le c \big(\| w_\theta \|_{L^2(\Gamma_D)} + \|\partial_n w_\theta \|_{L^2(\Gamma_N)} \big).
\end{equation}
This estimate bounds the consistency error due to the boundary penalty.
Let $\tilde e = e + \zeta$. Then it satisfies
\begin{equation*}
\left\{\begin{aligned}
-\Delta \tilde e &=  - \Delta e  , &&\text {in } \Omega,\\
\tilde e &= 0 , && \text {on } \Gamma_D,\\
\partial_n \tilde e &= 0, && \text {on } \Gamma_N.
\end{aligned}\right.
\end{equation*}
Since $\Delta e \in L^2(\Omega)$, the Poincar\'e inequality and the standard energy argument imply
$$ \|  \tilde e  \|_{L^2(\Omega)} \le c \| \nabla \tilde e  \|_{L^2(\Omega)} \le c \| \Delta e  \|_{L^2(\Omega)}.$$
This, the stability estimate \eqref{eqn:stab-hm} and the triangle inequality lead to
\begin{equation*}
\|   e  \|_{L^2(\Omega)}^2 \le c\big(\|   \tilde  e  \|_{L^2(\Omega)}^2  +  \|  \zeta  \|_{L^2(\Omega)}^2\big)
\le c \min(\sigma_d, \sigma_n)^{-1}\mathcal{L}_{\boldsymbol\sigma}(w_\theta,\gamma) .
\end{equation*}
This proves the first inequality of the lemma.
Next, by Lemma \ref{lem:tanh-approx} and the assumption $w^* \in H^3(\Omega)$,
there exists  $ w_{\bar \theta} \in \mathcal{W}_\epsilon$ such that
$\| w_{\bar\theta} - w^* \|_{H^2(\Omega)} \le \epsilon$. Let $\bar u = w_{\bar\theta} + \gamma^* \eta_\rho s$. Then we derive
\begin{align*}
\mathcal{L}_{\boldsymbol\sigma}( w_{\bar\theta},\gamma^*) & = \| \Delta(w_{\bar\theta} - w^*) \|_{L^2(\Omega)}^2
   + \sigma_d \| w_{\bar\theta} - w^* \|_{L^2(\Gamma_D)}^2 + \sigma_n \| \partial_n (w_{\bar\theta} - w^*) \|_{L^2(\Gamma_N)}^2\\
&\leq   c (1+\sigma_d + \sigma_n) \| w_{\bar\theta} - w^*  \|_{H^2(\Omega)}^2 \le  c (1+\sigma_d + \sigma_n) \epsilon^2.
\end{align*}
Thus by the minimizing property of $(\widehat w_\theta,\widehat\gamma)$, we arrive at
\begin{equation*}
\begin{aligned}
 \mathcal{L}_{\boldsymbol\sigma}(\widehat w_\theta,\widehat \gamma)
  &\le  \big|\mathcal{L}_{\boldsymbol\sigma}(\widehat w_\theta,\widehat \gamma)  -\widehat{\mathcal{L}}_{\boldsymbol\sigma}(\widehat w_\theta,\widehat \gamma) \big|
      +  \big| \widehat{\mathcal{L}}_{\boldsymbol\sigma}( w_{\bar\theta}, \gamma^*) -  \mathcal{L}_{\boldsymbol\sigma}( w_{\bar\theta};  \gamma^*) \big|
      +   \mathcal{L}_{\boldsymbol\sigma}( w_{\bar\theta},  \gamma^*) \\
  &\le 2 \sup_{(w_\theta,\gamma)\in \mathcal{W}_\epsilon \times I_\gamma} | \mathcal{L}_{\boldsymbol\sigma}( w_\theta, \gamma) - \widehat{\mathcal{L}}_{\boldsymbol\sigma}( w_\theta, \gamma)|
  +   c (1+\sigma_d + \sigma_n)  \epsilon^2.
\end{aligned}
\end{equation*}
This completes the proof of the lemma.
\end{proof}

\begin{remark}
The error estimate in Lemma  \ref{lem:err-decomp} is given in terms of the $L^2(\Omega)$ norm. This is due to the use of the penalty method for treating the zero Dirichlet boundary condition, cf. \eqref{eqn:stab-hm}. This is a form of the consistency error, and it essentially prevents one from obtaining error estimates in the $H^1(\Omega)$ norm. The estimate indicates that one should take the parameter $\boldsymbol\sigma$ sufficiently large in order to have small consistency errors. See also the works \cite{JiaoLai:2021,MullerZeinhofer:2022,HuJinZhou:2022} for detailed discussions on the consistency error due to penalization for the Dirichlet boundary condition in the context of the deep Ritz method.
\end{remark}

Next, we bound the error $\mathcal{E}_{stat}=\sup_{(w_\theta,\gamma)\in \mathcal{W}_\epsilon \times I_\gamma} | \mathcal{L}_{\boldsymbol\sigma}( w_\theta, \gamma) - \widehat{\mathcal{L}}_{\boldsymbol\sigma}(w_\theta, \gamma)|$, which arises from approximating the integrals by Monte Carlo. By the triangle inequality, we have the splitting
\begin{equation}\label{eqn:err-sta}
\begin{split}
\mathcal{E}_{stat}
 &\le \sup_{(w_\theta,\gamma)\in \mathcal{W}_\epsilon \times I_\gamma} |\Omega|\Big| \frac{1}{N_r} \sum_{i=1}^{N_r} h_r(X_i;w_\theta,\gamma) - \mathbb{E}_X (h_r(X;w_\theta,\gamma)) \Big|\\
 &\quad +  \sup_{w_\theta\in \mathcal{W}_\epsilon}  \sigma_d |\Gamma_D| \Big|  \frac{1}{N_d} \sum_{j=1}^{N_d} h_d(Y_j;w_\theta) - \mathbb{E}_Y (h_d(Y;w_\theta))  \Big|\\
  &\quad +  \sup_{w_\theta\in \mathcal{W}_\epsilon}  \sigma_n |\Gamma_N| \Big|  \frac{1}{N_n} \sum_{k=1}^{N_n} h_n(Z_k;w_\theta) - \mathbb{E}_Y (h_n(Z;w_\theta))  \Big|,
 \end{split}
\end{equation}
with
$h_r(x;w_\theta,\gamma) = (\Delta w_\theta +f+ \gamma\Delta(\eta_\rho s))^2(x)$ for $x\in \Omega$, $h_d(y;w_\theta) = |w_\theta(y)|^2$ for $y \in \Gamma_D$ and $h_n(z;w_\theta) = |\partial_n w_\theta(z)|^2 $ for $z\in \Gamma_N$. Thus, we define the following three function classes
$\mathcal{H}_r = \{h_r(w_\theta,\gamma): w_\theta \in  \mathcal{W}_\epsilon, \gamma\in I_\gamma\},$ $ \mathcal{H}_d  = \{h_d(w_\theta): w_\theta \in  \mathcal{W}_\epsilon  \}$ and $\mathcal{H}_n  = \{h_n(w_\theta): w_\theta \in  \mathcal{W}_\epsilon\}$.

To bound the error components in the decomposition \eqref{eqn:err-sta}, we employ Rademacher complexity of the DNN function classes $\mathcal{H}_r$, $\mathcal{H}_d$ and $\mathcal{H}_n$, which is then bounded using Dudley's formula in Lemma \ref{lem:Dudley} and Lipschitz continuity of the functions in these DNN function classes in Lemma \ref{lem:fcn-Lip}. These technical details are given in the appendix. Then we can state the following bound on the quadrature error.
\begin{lemma}\label{lem:err-stat}
For any small $\tau $, with probability at least
$1- 3 \tau$, there holds
\begin{equation*}
\sup_{( w_\theta,\gamma)\in\mathcal{W}_\epsilon \times I_\gamma} | \mathcal{L}_{\boldsymbol\sigma}( w_\theta, \gamma) - \widehat{\mathcal{L}}_{\boldsymbol\sigma}( w_\theta,  \gamma)  | \le c(e_r + \sigma_d e_d + \sigma_n e_n),
\end{equation*}
with $c=c(\|f\|_{L^\infty(\Omega)},\|\Delta (\eta_\rho s)\|_{L^\infty(\Omega)})$, and $e_r$, $e_d$ and $e_n$  defined by
\begin{align*}
  e_r & \le c \frac{L^2 B_\theta^{4L} N_\theta^{4L-4} \big(N_\theta^\frac12\big( \log^\frac12 B_\theta + \log^\frac12 N_\theta + \log^\frac12 N_r)  + \log^\frac12 \frac{1}{\tau}\big)}{\sqrt{N_r}}, \\
 e_d & \le c \frac{B_\theta^2 N_\theta^2 \big(N_\theta^{\frac12} (\log^\frac12 B_\theta+ \log^\frac12 N_\theta + \log^\frac12 N_d) + \log^\frac12\frac{1}{\tau} \big)}{\sqrt{N_d}}, \\
 e_n &\le c \frac{B_\theta^{2L} N_\theta^{2L-2} \big(N_\theta^{\frac12} (\log ^\frac12B_\theta + \log^\frac12 N_\theta + \log^\frac12 N_n) + \log^\frac12 \frac{1}{\tau} \big)}{\sqrt{N_n}}.
 \end{align*}
\end{lemma}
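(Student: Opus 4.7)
The starting point is the three-term splitting \eqref{eqn:err-sta}, which reduces the global statistical error $\mathcal{E}_{stat}$ to three independent empirical-process suprema, one each over the function classes $\mathcal{H}_r$, $\mathcal{H}_d$, and $\mathcal{H}_n$. Each of these is of the generic form $\sup_{h\in\mathcal{H}}|n^{-1}\sum_{i=1}^n h(\xi_i)-\mathbb{E}[h(\xi)]|$ with $\{\xi_i\}$ i.i.d.\ uniform on $\Omega$, $\Gamma_D$, or $\Gamma_N$, respectively. I will bound each supremum separately and then combine them by a union bound over the three failure events, which is exactly what produces the overall confidence level $1-3\tau$.

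\textbf{Symmetrization and concentration.} For each of the three classes $\mathcal{H}\in\{\mathcal{H}_r,\mathcal{H}_d,\mathcal{H}_n\}$ I will first establish a uniform $L^\infty$ bound on the integrands. This bound follows from $f\in L^\infty(\Omega)$, $\Delta(\eta_\rho s)\in L^\infty(\Omega)$, $|\gamma|\le B_\gamma$, and layerwise estimates for the tanh-network $w_\theta$ and its first and second derivatives on the parameter box $\{|\theta|_{\ell^\infty}\le B_\theta\}$. Given boundedness, a standard symmetrization argument followed by McDiarmid's inequality delivers, with probability at least $1-\tau$,
\begin{equation*}
\sup_{h\in\mathcal{H}}\Big|\frac{1}{n}\sum_{i=1}^n h(\xi_i)-\mathbb{E}[h(\xi)]\Big|\;\le\;2\,\mathfrak{R}_n(\mathcal{H})+\|h\|_{L^\infty}\sqrt{\frac{2\log(1/\tau)}{n}},
\end{equation*}
where $\mathfrak{R}_n(\mathcal{H})$ is the Rademacher complexity of $\mathcal{H}$. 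Applying this to each of the three classes and summing gives precisely the form $c(e_r+\sigma_d e_d+\sigma_n e_n)$ advertised in the statement, provided that $\mathfrak{R}_n$ satisfies the correct bounds on each piece.

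\textbf{Rademacher complexity via Dudley plus covering numbers.} To control each $\mathfrak{R}_n(\mathcal{H})$ I will invoke Dudley's entropy integral (Lemma \ref{lem:Dudley})
\begin{equation*}
\mathfrak{R}_n(\mathcal{H})\;\le\;C\int_0^{\mathrm{diam}(\mathcal{H})}\sqrt{\frac{\log\mathcal{N}(\delta,\mathcal{H},\|\cdot\|_{L^\infty})}{n}}\,\mathrm{d}\delta,
\end{equation*}
and estimate the covering number $\mathcal{N}(\delta,\mathcal{H},\|\cdot\|_{L^\infty})$ by transferring a covering of the parameter box $\{|\theta|_{\ell^\infty}\le B_\theta,\,|\theta|_{\ell^0}\le N_\theta\}\subset\mathbb{R}^{N_\theta}$ through the parameter-Lipschitz bounds for the maps $\theta\mapsto h_r(\cdot;w_\theta,\gamma)$, $\theta\mapsto h_d(\cdot;w_\theta)$ and $\theta\mapsto h_n(\cdot;w_\theta)$ supplied by Lemma \ref{lem:fcn-Lip}. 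A parameter cover of radius $\delta'$ has cardinality $(B_\theta/\delta')^{N_\theta}$; with a Lipschitz constant $\Lambda$ this produces a $\Lambda\delta'$-cover of $\mathcal{H}$, so $\log\mathcal{N}(\delta,\mathcal{H},\|\cdot\|_{L^\infty})\lesssim N_\theta\log(\Lambda B_\theta/\delta)$. Inserting into Dudley's integral gives the characteristic factor $\Lambda\,\bigl(N_\theta^{1/2}(\log^{1/2}B_\theta+\log^{1/2}N_\theta+\log^{1/2}n)+\log^{1/2}(1/\tau)\bigr)/\sqrt n$, which matches the claimed form once the right $\Lambda$ is used for each class.

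\textbf{Main obstacle.} The decisive (and only nontrivial) step is obtaining the parameter-Lipschitz constants $\Lambda$ for the three classes, because these are what determine the prefactors $B_\theta^{4L}N_\theta^{4L-4}$, $B_\theta^{2L}N_\theta^{2L-2}$, and $B_\theta^2 N_\theta^2$. For $h_d=w_\theta^2$ a single layer of the chain rule and one absorption of $\|w_\theta\|_{L^\infty}$ suffices, yielding $\Lambda_d\lesssim B_\theta^2 N_\theta^2$. For $h_n=(\partial_n w_\theta)^2$ I must differentiate the DNN once in space and once in a parameter direction; a layerwise induction on the tanh-network (together with the smoothness bounds $\|\tanh^{(k)}\|_{L^\infty}\le 1$, $k=0,1,2$) gives $\|\partial_n w_\theta\|_{L^\infty}\lesssim B_\theta^L N_\theta^{L-1}$ and a Lipschitz constant of the same order in $\theta$, whence $\Lambda_n\lesssim B_\theta^{2L}N_\theta^{2L-2}$ after squaring. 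The hardest case is $h_r$, which involves two spatial derivatives (the Laplacian) so that the same layerwise argument now costs two powers of the per-layer weight bound, giving $\|\Delta w_\theta\|_{L^\infty}\lesssim B_\theta^{2L}N_\theta^{2L-2}$ and, after squaring and including the $\gamma$-dependence, $\Lambda_r\lesssim L^2 B_\theta^{4L}N_\theta^{4L-4}$ (the extra $L^2$ reflects the $L$ possible positions at which the spatial derivative can act in each of the two derivative expansions). Once these three Lipschitz constants are in hand, combining them with the Dudley/covering bound and the McDiarmid step, and taking a union bound over the three tail events, yields the stated inequality.
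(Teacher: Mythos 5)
Your proposal is correct and follows essentially the same route as the paper: the three-term splitting \eqref{eqn:err-sta}, the symmetrization/McDiarmid bound (Lemma \ref{lem:PAC}), Dudley's entropy integral (Lemma \ref{lem:Dudley}) with covering numbers of the parameter box transferred through the Lipschitz estimates of Lemma \ref{lem:fcn-Lip}, and a union bound over the three events. The only slight imprecision is attributing the prefactors $B_\theta^{4L}N_\theta^{4L-4}$ etc.\ to the Lipschitz constants $\Lambda$; in the paper these come from the uniform bounds $M_{\mathcal{H}}$ (the upper limit of Dudley's integral), while $\Lambda$ enters only inside the logarithms --- but your layerwise computations produce the correct orders regardless.
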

\begin{proof}
Fix $m \in \mathbb{N }$, $B_\theta \in [1, \infty)$, $\epsilon \in  (0,1)$,
and $\mathbb{B}_{B_\theta} := \{x\in\mathbb{R}^m:\ |x|_{\ell^\infty}\leq B_\theta\}$. Then by \cite[Prop. 5]{CuckerSmale:2002},
$ \log \mathcal{C}(\mathbb{B}_{B_\theta},|\cdot|_{\ell^\infty},\epsilon)\leq m\log (4B_\theta\epsilon^{-1})$.
The Lipschitz continuity estimates in Lemmas \ref{lem:NN-Lip} and \ref{lem:fcn-Lip} imply
\begin{align*}
 \log \mathcal{C}(\mathcal{H}_r,\|\cdot\|_{L^{\infty}(\Omega)},\epsilon)&\leq
\log \mathcal{C}(\Theta \times I_\gamma,\max(|\cdot|_{\ell^\infty},|\cdot|) ,\Lambda_{r}^{-1}\epsilon)  \leq cN_\theta \log(4B_\theta \Lambda_r\epsilon^{-1}),
\end{align*}
with $\Lambda_{r} = c N_\theta L^3 W^{5L-5}B_\theta^{5L-3}$. By Lemma \ref{lem:fcn-Lip},
we have $M_{\mathcal{H}_r} = c L^2 W^{4L-4}B_\theta^{4L}$. Then setting $s=n^{-\frac12}$ in Lemma \ref{lem:Dudley} and using the facts $1\leq B_\theta$, $1\leq L$ and $1\leq W \leq N_\theta$, $1\leq L\leq c\log 5$ (for $d=2$ and $k=3$), cf. Lemma \ref{lem:tanh-approx}, lead to
\begin{align*}
\mathfrak{R}_n(\mathcal{H}_r)
 \leq&4n^{-\frac12} +12n^{-\frac12}\int^{M_{\mathcal{H}_r}}_{n^{-\frac12}}{\big(cN_\theta \log(4B_\theta \Lambda_{r}\epsilon^{-1})\big)}^{\frac12}\ {\rm d}\epsilon\\
 \leq& 4n^{-\frac12}+12n^{-\frac12}M_{\mathcal{H}_r}\big(cN_\theta \log(4B_\theta\Lambda_r n^{\frac12})\big)^\frac12 \\
 \leq&4n^{-\frac12}+cn^{-\frac12} W^{4L-4}B_\theta^{4L} N_\theta^{\frac12}\big(\log^\frac12 B_\theta+\log^\frac12 \Lambda_{r}+\log^\frac12 n\big)\\
 \leq& c n^{-\frac12} B_\theta^{4L} N_\theta^{4L-\frac{7}{2}} \big( \log^\frac12 B_\theta + \log^\frac12 N_\theta + \log^\frac12 n \big).
\end{align*}
Similarly, repeating the preceding argument leads to
\begin{align*}
 \mathfrak{R}_n(\mathcal{H}_d)&\le c n^{-\frac12} B_\theta^2 N_\theta^{\frac52} (\log^\frac12 B_\theta + \log^\frac12 N_\theta + \log^\frac12 n),\\
 \mathfrak{R}_n(\mathcal{H}_n)&\le c n^{-\frac12} B_\theta^{2L} N_\theta^{2L-\frac32} (\log^\frac12 B_\theta + \log^\frac12 N_\theta + \log^\frac12 n).
 \end{align*}
Finally, the desired result follows from Lemma \ref{lem:PAC}.
\end{proof}

Then combining Lemma \ref{lem:err-decomp} with Lemma \ref{lem:err-stat} yields the following error estimate. Thus, by choosing the numbers $N_r$, $N_d$ and $N_n$ of sampling points sufficiently large, the $L^2(\Omega)$ error of the SEPINN approximation can be made about $O(\epsilon^2)$.
\begin{theorem}\label{thm:err}
Fix a tolerance $\epsilon>0$, and let $(\widehat w_{\theta}, \widehat \gamma)\in \mathcal{W}_\epsilon \times I_\gamma $ be a minimizer to the empirical loss $\widehat{\mathcal{L}}_{\boldsymbol\sigma}(w_\theta,\gamma)$ in \eqref{2deqn:loss-emp}.
Then for any small $\tau$, with the statistical errors  $e_r$, $e_d$ and $e_n$ from Lemma \ref{lem:err-stat}, we have with probability at least  $1- 3 \tau$, the following error estimate holds
\begin{equation*}
\| u^* - \widehat u\|_{L^2(\Omega)}^2 \le c \min(\sigma_d,\sigma_n)^{-1} \big((1+\sigma_d + \sigma_n)\epsilon^2 + e_r + \sigma_d e_d + \sigma_n e_n \big),
\end{equation*}
where the constant $c$ is independent of $\boldsymbol\sigma$.
\end{theorem}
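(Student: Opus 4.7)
The plan is to combine the two main preparatory lemmas in the preceding subsection, since Theorem \ref{thm:err} is essentially the concatenation of Lemma \ref{lem:err-decomp} (error decomposition into approximation and statistical parts) with Lemma \ref{lem:err-stat} (bound on the statistical part). There is no new analytic content to produce; the task is to write the chain of inequalities cleanly and to track the probability budget.

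First, I would invoke Lemma \ref{lem:err-decomp} applied to the empirical minimizer $(\widehat w_\theta,\widehat\gamma)\in\mathcal{W}_\epsilon\times I_\gamma$. This yields, deterministically in the sampling,
\begin{equation*}
\|u^*-\widehat u\|_{L^2(\Omega)}^2 \le c\min(\sigma_d,\sigma_n)^{-1}\Big((1+\sigma_d+\sigma_n)\epsilon^2 + \sup_{(w_\theta,\gamma)\in\mathcal{W}_\epsilon\times I_\gamma}\bigl|\mathcal{L}_{\boldsymbol\sigma}(w_\theta,\gamma)-\widehat{\mathcal{L}}_{\boldsymbol\sigma}(w_\theta,\gamma)\bigr|\Big),
\end{equation*}
where the first summand comes from the existence of a DNN $w_{\bar\theta}\in\mathcal{W}_\epsilon$ within $\epsilon$ of $w^*$ in $H^2(\Omega)$ (Lemma \ref{lem:tanh-approx} with $d=2$, $k=3$, $s=2$, using the assumption $w^*\in H^3(\Omega)$), and the second summand is the uniform quadrature gap on the hypothesis class.

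Second, I would apply Lemma \ref{lem:err-stat} to control the supremum in the displayed inequality. That lemma provides, with probability at least $1-3\tau$ (the factor $3$ arising from a union bound over the three independent i.i.d.\ sample sets $\{X_i\}$, $\{Y_j\}$, $\{Z_k\}$),
\begin{equation*}
\sup_{(w_\theta,\gamma)\in\mathcal{W}_\epsilon\times I_\gamma}\bigl|\mathcal{L}_{\boldsymbol\sigma}(w_\theta,\gamma)-\widehat{\mathcal{L}}_{\boldsymbol\sigma}(w_\theta,\gamma)\bigr|\le c(e_r+\sigma_d e_d+\sigma_n e_n),
\end{equation*}
with $e_r$, $e_d$, $e_n$ as in the lemma. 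Substituting this bound into the previous inequality and absorbing constants into $c$ (which is independent of $\boldsymbol\sigma$, as in Lemma \ref{lem:err-decomp}) yields the asserted estimate on the same high-probability event.

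The only subtlety worth flagging, rather than an obstacle, is that the constant $c$ in Lemma \ref{lem:err-stat} depends on $\|f\|_{L^\infty(\Omega)}$ and $\|\Delta(\eta_\rho s)\|_{L^\infty(\Omega)}$, so one should note that these quantities are fixed by the problem data and are not hidden $\boldsymbol\sigma$-dependence; with this remark, the proof is simply the two-line combination above.
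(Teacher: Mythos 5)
Your proposal matches the paper's argument exactly: the theorem is stated immediately after Lemma \ref{lem:err-stat} with the one-line justification that it follows by combining Lemma \ref{lem:err-decomp} with Lemma \ref{lem:err-stat}, which is precisely the two-step substitution you carry out. Your additional remarks on the probability budget and on the provenance of the constant $c$ are accurate and consistent with the paper.
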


\begin{remark}
According to Theorem \ref{thm:err}, the parameter $\boldsymbol\sigma$ has to be tuned carefully in order to optimally achieve the error: a too small $\boldsymbol{\sigma}$ incurs large consistency errors, while a too large $\boldsymbol{\sigma}$ incurs big statistical errors {\rm(}on approximating the integrals on the boundary{\rm)}.
\end{remark}

\section{Numerical experiments}\label{sec:experiment}
Now we present numerical examples to illustrate  SEPINN,
and compare it with several existing PINN type solvers. Note that our main goal is to illustrate the improved approximation accuracy by singularity enrichment, instead of verifying the convergence rate. Indeed, even for the standard PINN (or any other neural solvers), numerically verifying the theoretical rate remains a daunting challenge \cite{SiegelHong:2023}. In the training, $N_r=10,000$ points in the domain $\Omega$
and $N_b=800$ points on the boundary $\partial\Omega$ are selected  uniformly at random to form the empirical loss $\widehat{\mathcal{L}}_{\boldsymbol{\sigma}}$, unless
otherwise specified. In the path-following (PF) strategy,
we take an increasing factor $q=1.5$. All numerical experiments were conducted on a personal laptop
(Windows 10, with RAM 8.0GB, Intel(R) Core(TM) i7-10510U CPU, 2.3 GHz), with Python 3.9.7, with PyTorch. The gradient of the DNN output $w_\theta(x)$ with respect to the input ${x}$ (i.e.,
spatial derivative) and that of the loss $\widehat{\mathcal{L}}_{\boldsymbol\sigma}$ to $\theta$ are computed via automatic differentiation \cite{Baydin:2018} using \texttt{torch.autograd}.

For SEPINN and SEPINN-C (based on cutoff), we minimize the loss $\widehat{\mathcal{L}}_{\boldsymbol\sigma}$ in two stages: first determine the coefficients $\boldsymbol{\gamma}_N$, and then reduce the boundary
error and refine the DNN approximation $w_\theta$ (of the regular part $w$) by the PF strategy. We use different optimizers at
these two stages. First, we minimize the loss $\widehat{\mathcal{L}}_{\boldsymbol{\sigma}}(w_\theta,\boldsymbol\gamma)$ in both $\theta$ and $\boldsymbol{\gamma}$ using Adam
\cite{KingmaBa:2015} (from the SciPy library), with the default setting (tolerance: 1.0e-8, no
box constraint, maximum iteration number: 1000); then, we minimize $\widehat{\mathcal{L}}_{\boldsymbol\sigma}(w_\theta, \widehat{\boldsymbol\gamma}^*)$ (fixing $\boldsymbol{\gamma}$ at $\widehat{\boldsymbol{\gamma}}^*$ fixed)
using limited memory BFGS (L-BFGS) \cite{ByrdLu:1995}, with the default setting (tolerance: 1.0e-9,
no box constraint, maximum iteration number:  2500). For SEPINN-N, we employ only L-BFGS \cite{ByrdLu:1995}. We have employed different optimizers for SEPINN-C and SEPINN-N: in SEPINN-C, the influence of the DNN parameters $\theta$ and stress intensity factors $\boldsymbol{\gamma}$ on the loss $\widehat{\mathcal{L}}_{\boldsymbol{\sigma}}(w_\theta,\boldsymbol{\gamma})$ differ markedly, and one may use different learning rate for them to compensate the influences. Once the parameter $\boldsymbol{\gamma}$ is fixed, the loss $\widehat{\mathcal{L}}_{\boldsymbol{\sigma}}(w_\theta,\boldsymbol{\gamma})$ can bem efficiently minimized via L-BFGS.
To measure the accuracy of an approximation $\hat w$ of $w^*$, we use the relative
$L^2(\Omega)$-error  $e=\|w^*-\hat w\|_{L^2(\Omega)}/\|w^*\|_{L^2(\Omega)}$, with the error computed using  sampling points in the domain $\Omega$. The stopping
condition of the PF strategy is set to $e<\text{1.00e-3}$ and $\sigma_d^{(k)},\sigma_n^{(k)}\leq \sigma^*$, for
some fixed $\sigma^*>0$. The first condition ensures that $\hat w$ can achieve the desired
accuracy and the second condition terminates the iteration after a finite number of loops. The detailed hyper-parameter setting of the PF strategy is listed in Table \ref{tab:hyper-path}. Throughout the training the box constraint on the DNN parameter is not imposed, since numerically it is observed that the DNN parameters stay bounded during the entire training process. The Python code
for reproducing the numerical experiments will be made available at \url{https://github.com/hhjc-web/SEPINN.git}. The zip file of the complete set of codes is available in the supplementary material.

\begin{table}[hbt!]
\centering
\begin{threeparttable}
\caption{The hyper-parameters for the PF strategy for SEPINN (2D) and SEPINN-C and SEPINN-N (3D) for the examples. The notation $\widehat\gamma^*$ denotes the estimated stress intensity factor, and $e$ the prediction error. }\label{tab:hyper-path}
\begin{tabular}{|c|ccccccc|}
\toprule
Example & $\sigma^*$ & $\boldsymbol{\sigma}^{(1)}$ &  $\boldsymbol{\sigma}^{(K)}$ & $\widehat\gamma^*$ &  epoch &  time(s) & $e$\\
\midrule
 \ref{exam:2d-lshape} & 1200 & 100 & 1139.1 & 1.0001 & 13.4k & 690 & 1.84e-3\\
\midrule
\ref{exam:2d-mix} & 800 &
$\begin{pmatrix}
    100\\100
\end{pmatrix}$ & $\begin{pmatrix}
    759.4\\759.4
\end{pmatrix}$ & 1.0033 & 3.7k & 109 & 4.62e-3\\
\midrule
\ref{exam:3d-lshape} (SE-C) & 4000 & 400 & 3037.5 & Table \ref{table:para} & 8.3k & 4161 & 5.04e-2\\
\ref{exam:3d-lshape} (SE-N) & 5000 & 400 & 4556.3 & -- -- & 13.2k & 2095 & 3.80e-2\\
 \midrule
 \ref{exam:3d-singularities} (SE-C) &  1000 &
 $\begin{pmatrix}
    100\\100
\end{pmatrix}$ & $\begin{pmatrix}
    759.4\\759.4
\end{pmatrix}$ & Fig. \ref{fig:para2} & 10.4k & 10071 & 2.08e-2\\
 \ref{exam:3d-singularities} (SE-N) & 4000 & $\begin{pmatrix}
    100\\400
\end{pmatrix}$ & $\begin{pmatrix}
    759.4\\3037.5
\end{pmatrix}$ & -- -- & 21.2k & 6493 & 3.83e-2\\
\midrule
\ref{exam:eigen} &  600 & 50 & 569.5 & Table \ref{table:eigen} & 15.8k & 1396 & -- --\\
\bottomrule
\end{tabular}
\end{threeparttable}
\end{table}

First, we showcase the approach on an L-shape domain \cite[Example 5.2]{cai2001solution}.
\begin{example}\label{exam:2d-lshape}
The domain $\Omega=(-1,1)^2\backslash \left(\left[0,1\right)\times\left(-1,0\right]\right)$. Set $\rho=1$ and $R=\frac{1}{2}$ in \eqref{cutoff}, the source
	$$ f=\left\{\begin{aligned}
\sin (2 \pi x)\left[2 \pi^2\left(y^2+2 y\right)\left(y^2-1\right)-\left(6 y^2+6 y-1\right)\right]-\Delta\left(\eta_\rho s\right),\,\,-1 \leq y \leq 0,\\
		\sin (2 \pi x)\left[2 \pi^2\left(-y^2+2 y\right)\left(y^2-1\right)-\left(-6 y^2+6 y+1\right)\right]-\Delta\left(\eta_\rho s\right),\,\,0 \leq y \leq 1,
	\end{aligned}\right. $$
with the singular function $s=r^{\frac{2}{3}} \sin \left(\frac{2 \theta}{3}\right)$, and $\Gamma_D=\partial\Omega$. The exact solution $u$ of the problem is given by $u=w+\eta_\rho s$, with the regular part $w$ given by
\begin{equation}
w=\begin{cases}
			\sin (2 \pi x)\left(\frac{1}{2} y^2+y\right)\left(y^2-1\right),&-1 \leq y \leq 0, \\
			\sin (2 \pi x)\left(-\frac{1}{2} y^2+y\right)\left(y^2-1\right),&0 \leq y \leq 1.
		\end{cases}
\end{equation}
\end{example}

\def\figheight {1.85cm}
\begin{figure}[hbt!]
	\centering \setlength{\tabcolsep}{0pt}
	\begin{tabular}{cccccc}
		\includegraphics[height=\figheight]{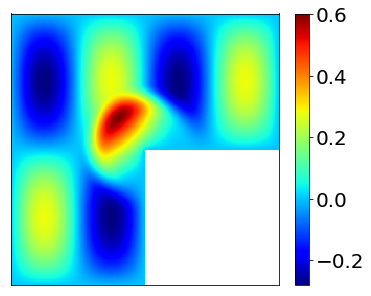}& \includegraphics[height=\figheight]{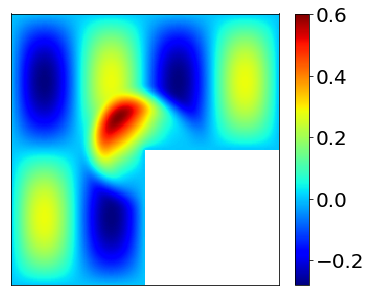}& \includegraphics[height=\figheight]{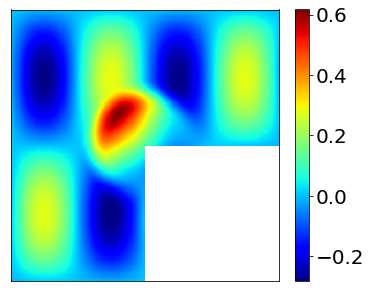}&
		\includegraphics[height=\figheight]{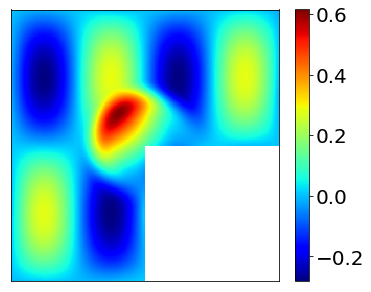}&
		\includegraphics[height=\figheight]{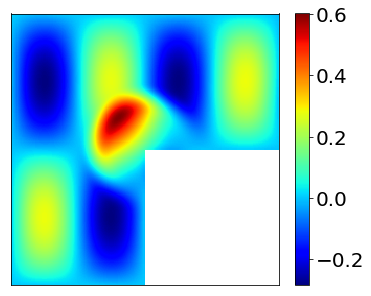}& \includegraphics[height=\figheight]{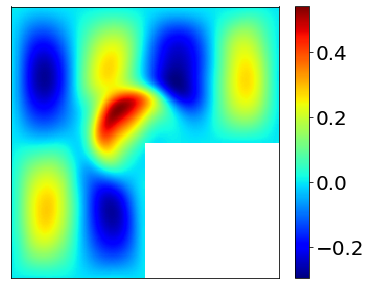}\\
		&\includegraphics[height=\figheight]{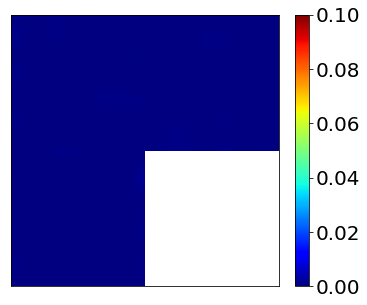} &
		\includegraphics[height=\figheight]{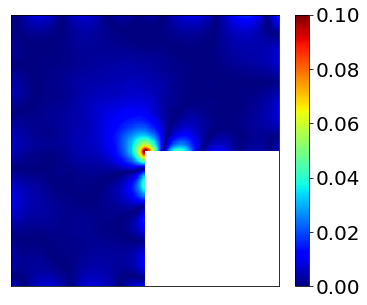} &
		\includegraphics[height=\figheight]{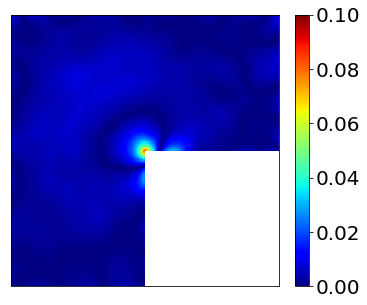} &
		\includegraphics[height=\figheight]{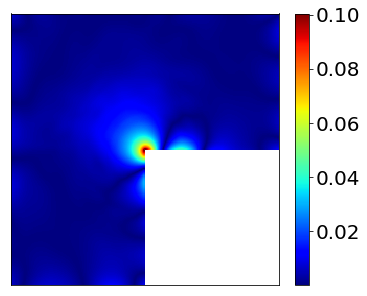} & \includegraphics[height=\figheight]{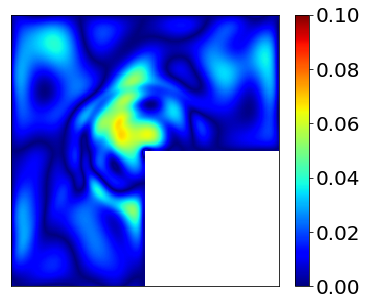}\\
		exact & SEPINN & SAPINN & FIPINN & PINN & DRM
	\end{tabular}
	\caption{The numerical approximations for Example \ref{exam:2d-lshape} by SEPINN, SAPINN, FIPINN, PINN and DRM. From top to bottom: DNN approximation and pointwise error.} \label{fig:exam:2d-lshape}
\end{figure}

\begin{figure}[hbt!]
	\centering\setlength{\tabcolsep}{0pt}
	\begin{tabular}{cc}
		\includegraphics[height=4.8cm]  {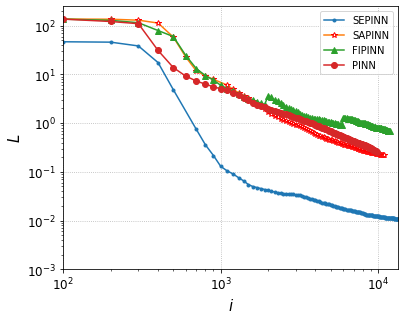} &  \includegraphics[height=4.8cm]  {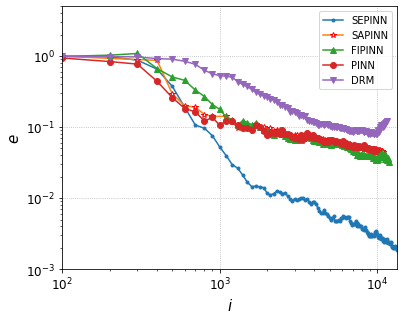}\\
		(a) $\widehat{ \mathcal{L}}$ vs $i$ & (b) $e$ vs $i$
	\end{tabular}
	\caption{\label{fig:exam:2d-lshape compare} Training dynamics for SEPINN and benchmark methods: {\rm(a)} the decay of the empirical loss $\widehat{\mathcal{L}}$ versus the iteration index $i$ {\rm(}counted along the path-following trajectory{\rm)} and {\rm(b)} the error $e$ versus $i$.}
\end{figure}

In this example, the regular part $w$ belongs to $H^2(\Omega)$ but not to $H^3(\Omega)$, and $u$ lies in $H^1(\Omega)$ but not in $H^2(\Omega)$. In SEPINN, we employ a 2-20-20-20-1 DNN (3 hidden layers, each having 20 neurons).
The first stage of the PF strategy gives an estimate $\widehat\gamma^*=1.0001$, and the final prediction error $e$ after the second stage is  $1.84{\rm e}\text{-}3$.
Fig. \ref{fig:exam:2d-lshape} shows that the pointwise error of the SEPINN approximation is  small and the accuracy around the singularity at the reentrant corner is excellent. In contrast, applying PINN and DRM directly fails to yield satisfactory results near the reentrant corner because of the presence of the singular term $r^\frac{2}{3}\sin\frac{2}{3}\theta$, consistent with the approximation theory of DNNs to singular functions \cite{GuhringRaslan:2021}. DRM shows larger errors over the whole domain, not just in the vicinity of the corner. By adaptively adjusting the empirical loss, SAPINN and FIPINN can yield more accurate approximations than PINN, but the error around the singularity is still large. Numerically, FIPINN can adaptively add sampling points near the corner but does not give high concentration, which limits
the accuracy of the final DNN approximation. For all the methods, the largest error occurs near the boundary $\partial\Omega$.

To shed further insights into the methods, we show in Fig. \ref{fig:exam:2d-lshape compare} the training dynamics of the empirical loss $\widehat{\mathcal{L}}$ and relative error $e$, where $i$ denotes the total iteration index along with the PF loops. In Fig. \ref{fig:exam:2d-lshape compare}, we have omitted the loss curve for DRM, since its value is negative. For all methods, the loss $\widehat{\mathcal{L}}$ and error $e$ both decay steadily as the iteration proceeds, indicating stable convergence of the optimizer, but SEPINN enjoys the fastest decay and smallest error $e$, due to the improved regularity of $w^*$. The final error $e$ saturates at around $ 10^{-3} $ for SEPINN and $10^{-2}$ for PINN, SAPINN and FIPINN, but only $10^{-1}$ for DRM. Indeed the accuracy of neural PDE solvers tends to stagnate at a level of $10^{-2}\sim 10^{-3}$ \cite{RAISSI2019686,yu2018deep,Zang:2020,CuomoSchiano:2022}.

We next investigate a mixed boundary value problem \cite[Example 1]{cai2006finite}.
\begin{figure}[hbt!]
	\centering \setlength{\tabcolsep}{0pt}
	\begin{tabular}{cccccc}
		\includegraphics[height=\figheight]{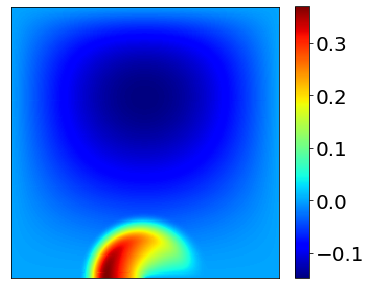}&
        \includegraphics[height=\figheight]{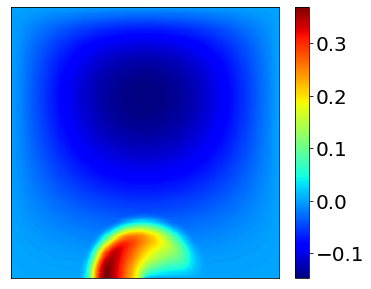}& \includegraphics[height=\figheight]{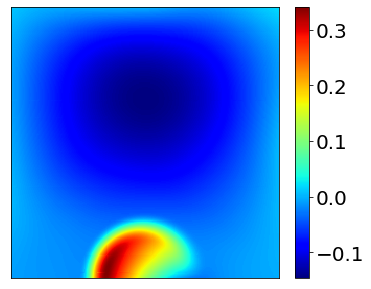}&
		\includegraphics[height=\figheight]{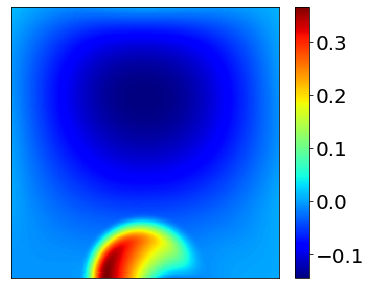}&
		\includegraphics[height=\figheight]{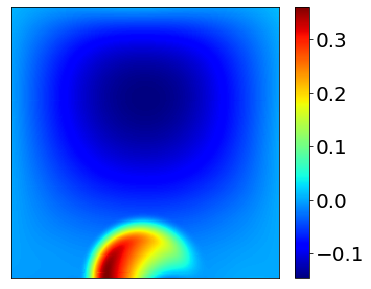}&
            \includegraphics[height=\figheight]{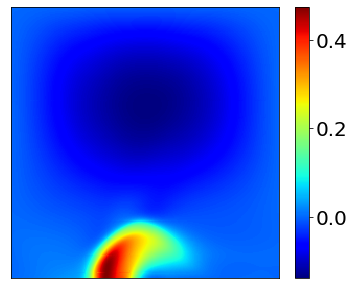}\\
		&\includegraphics[height=\figheight]{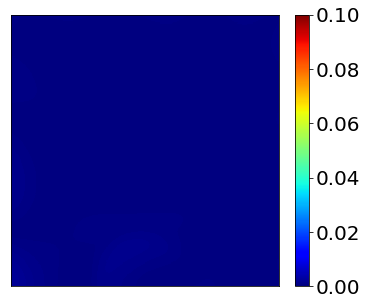} &
		\includegraphics[height=\figheight]{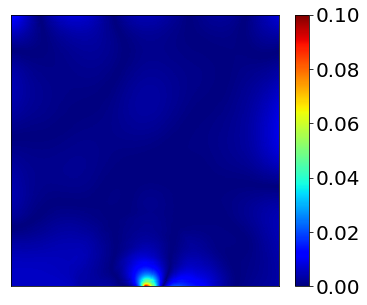} &
		\includegraphics[height=\figheight]{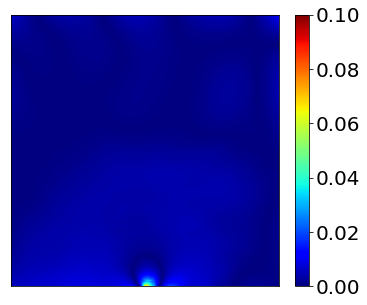} &
		\includegraphics[height=\figheight]{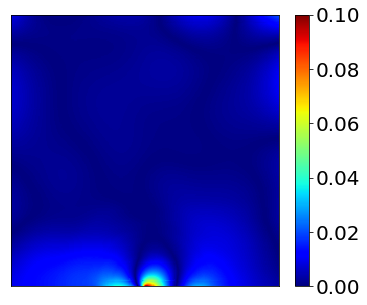}&
            \includegraphics[height=\figheight]{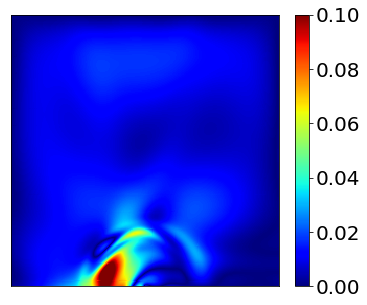}\\
	exact	& SEPINN & SAPINN & FIPINN & PINN & DRM
	\end{tabular}
	\caption{The numerical approximations of Example \ref{exam:2d-mix} by the proposed SEPINN {\rm(}error: $\text{4.62e-3}${\rm)}, SAPINN {\rm(}error: $\text{4.40e-2}${\rm)}, FIPINN {\rm(}error: $\text{3.65e-2}${\rm)}, PINN {\rm(}error: $\text{7.33e-2}${\rm)} and DRM {\rm(}error: $\text{1.86e-1}${\rm)}. From top to bottom: DNN approximation and pointwise error.} \label{fig:exam:2d-mix}
\end{figure}

\begin{example}\label{exam:2d-mix}
The domain $\Omega$ is the unit square $\Omega=(0,1)^2$, $\Gamma_N=\{(x,0):x\in(0,\frac{1}{2})\}$ and $\Gamma_D=\partial\Omega\backslash \Gamma_N$. The singular function
$s=r^{\frac{1}{2}}\sin\frac{\theta}{2}$ in the local polar coordinate $ (r,\theta) $ at $ (\frac{1}{2}, 0) $. Set $\rho=1$ and $R=\frac{1}{4}$ in \eqref{cutoff}, the source $f=-\sin(\pi x)(-\pi^2y^2(y-1)+6y-2)-\Delta(\eta_\rho s).$ The exact solution $u=\sin(\pi x)y^2(y-1)+\eta_\rho s$, with the regular part $w=\sin(\pi x)y^2(y-1)$ and stress intensity factor $\gamma=1$.
\end{example}

This problem has a geometric singularity at the point $ (\frac{1}{2}, 0) $, where the boundary condition changes from Dirichlet to Neumann with an interior angle $ \omega = \pi $. We employ a 2-10-10-10-1 DNN (with 3 hidden layers, each having 10 neurons). The first stage of the PF strategy gives an estimate $\widehat\gamma^*=1.0033$, and the prediction error $e$ after the second stage is 4.62e-3. The singularity at the crack point is accurately resolved by SEPINN, cf. Fig. \ref{fig:exam:2d-mix}, which shows also the approximations by DRM and other PINN techniques. The overall solution accuracy is very similar to Example \ref{exam:2d-lshape}, and SEPINN achieves the smallest error. SAPINN and FIPINN improve the standard PINN, but still suffer from large errors near the singularity. Like before, the maximum error occurs near the boundary $\partial\Omega$, especially near the singularity point.

In addition, we have experimented with different architectures (depth, width) of the neural networks for the example. The training dynamics of the loss $\widehat{\mathcal{L}}$ and the relative error $e$ with four different NNs are shown in Fig. \ref{fig:exam:2d-mix-net}. The loss $\widehat{\mathcal{L}}$ for the 2-20-20-20-20-20-1 network cannot stabilize at a lower level than the 2-20-20-20-1 network. Thus, increasing the size of the NN or the number of sampling points alone does not necessarily lead to a smaller error for the approximation, as observed earlier in \cite{JinLiLu:2022}. It remains to develop practical guidelines to select suitable architecture balancing good accuracy and computational expense.

\begin{figure}[hbt!]
\centering\setlength{\tabcolsep}{0pt}
\begin{tabular}{cc}
    \includegraphics[height=5.5cm]  {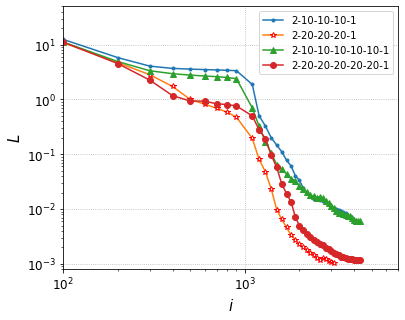} &
    \includegraphics[height=5.5cm]  {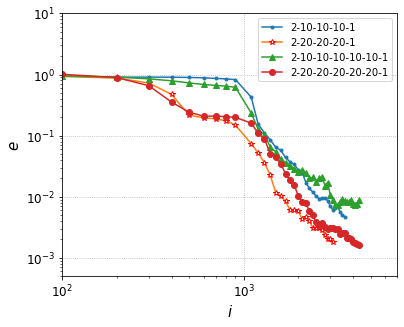} \\
    (a) $L$ vs $i$ & (b) $e$ vs $i$
\end{tabular}
 \caption{\label{fig:exam:2d-mix-net} The training dynamics for Example \ref{exam:2d-mix} with four different NNs: {\rm(a)} the loss $L$ versus the iteration index $i$ and {\rm(b)} the error $e$ versus the iteration index $i$.}
\end{figure}

The next example is a 3D Poisson equation adapted from \cite[Example 1]{nkemzi2021singular}.
\begin{example}\label{exam:3d-lshape}
Let $\Omega_0=(-1,1)^2\backslash\ \left(\left[0,1\right)\times\left(-1,0\right]\right)$, and the domain $\Omega=\Omega_0\times(-1,1)$. Define
$\Phi(r,z)=-2\arctan\dfrac{{\rm e}^{-\pi r}\sin \pi z}{1+{\rm e}^{-\pi r}\cos \pi z}=2\sum_{n=1}^\infty \dfrac{(-1)^n{\rm e}^{-n\pi r}\sin n\pi z}{n}$,
and set $\rho=1$ and $R=\frac{1}{2}$ in \eqref{cutoff}, the source $f=6x(y-y^3)(1-z^2) + 6y(x-x^3)(1-z^2) + 2(y-y^3)
(x-x^3)-\Delta(\Phi\eta_\rho s)$, with the singular function $s=r^{\frac{2}{3}} \sin (\frac{2 \theta}{3})$, and a zero
Dirichlet boundary condition.  The exact solution $u$ is given by $u=(x-x^3)(y-y^3)(1-z^2)+\Phi\eta_\rho s$.
\end{example}

The coefficients $\gamma_n^*$ are given by $\gamma_0^*=0$ and $\gamma_n^*=(-1)^n
\frac{2}{n}$ for $n\in\mathbb{N}$. In SEPINN-C (cutoff), we take a truncation level $N=20$ to approximate the first $N+1$
coefficients in the series for $\Phi(r,z)$, and a  3-10-10-10-1 DNN to approximate $w$.
In the first stage, we employ a learning rate $1.0{\rm e}\text{-}3$ for the DNN parameters $\theta$ and $8.0{\rm e}\text{-}3$ for
coefficients $\boldsymbol\gamma_N$, and the prediction error $e$ after the second stage is 5.04e-2. We present the slices at $z=\frac{1}
{2}$ and $ z=\frac{1}{4} $ in Fig. \ref{fig:exam:3d-lshape}. The true and estimated values of  $\boldsymbol{\gamma}_n$ are given in Table
\ref{table:para}: the first few terms of the expansion are well approximated, but
as the index $n$ gets larger, the approximations of $\widehat{\gamma}_n^*$ becomes less accurate. However, the precise mechanism for the observation remains unclear.  To shed insights, in Table \ref{table:converging rate}, we show the relative errors $e$ of $\hat w$, $e_S$ of the singular part $\widehat S = \widehat{\gamma_0}\eta_\rho s + \sum_{n=1}^N \widehat{\gamma_n}{\rm e}^{-\xi_n r}Z_n(z)\eta_\rho s$, and $e_u$ of the approximation $\hat u=\hat{w}+\widehat S$, defined by $e_u=\|u^*-\hat u\|_{L^2(\Omega)}/\|u^*\|_{L^2(\Omega)}$, and likewise for $e_S$. The absolute errors are also given in order to give the full picture. Note that $e$ and $e_u$ exhibit very similar behavior, and both tend to stagnate, which is expected due to the slow decay of the coefficients $\gamma_n^*$. The error $e_S$ decays steadily as $N$ increases. Thus, the non-steady convergence of $e_u$ is due to the approximation of $w$, which might be due to the optimization error during the training.

Next we present the SEPINN-N approximation. We use two 4-layer DNNs, both of 3-10-10-10-1, for $w$ and $\Phi$. The PF strategy is run with a maximum 2500 iterations for each fixed $\boldsymbol{\sigma}$, and the final prediction error $e$ is 3.80e-2.
The maximum error of the SEPINN-N approximation is slightly smaller, and both can give excellent approximations.

\begin{table}[hbt!]
\setlength{\tabcolsep}{3pt}
\centering
\begin{threeparttable}
    \caption{ The estimated values of the parameters $\gamma_n$ for Example \ref{exam:3d-lshape}, with five significant digits. \label{table:para}}
	\centering
		\begin{tabular}{c|ccccccc}
			\toprule

			 & $\gamma_0$ & $\gamma_1$ & $\gamma_2$ & $\gamma_3$ & $\gamma_4$ & $\gamma_5$ & $\gamma_6$ \\
			\hline
			exact & 0.000{\rm e}0 & -2.000{\rm e}0 & 1.000{\rm e}0 &  -6.667{\rm e}-1  &  5.000{\rm e}-1  & -4.000{\rm e}-1 & 3.333{\rm e}-1\\
			\hline
			predicted & -7.793{\rm e}-5 & -2.000{\rm e}0 & 1.006{\rm e}0 & -6.578{\rm e}-1 & 5.135{\rm e}-1 & -4.238{\rm e}-1 & 3.271{\rm e}-1 \\
			\midrule
			 & $\gamma_7$ & $\gamma_8$ & $\gamma_9$ & $\gamma_{10}$ & $\gamma_{11}$ & $\gamma_{12}$ & $\gamma_{13}$\\
			\hline
			exact  & -2.857{\rm e}-1 & 2.500{\rm e}-1 &  -2.222{\rm e}-1  & 2.000{\rm e}-1  & -1.818{\rm e}-1 & 1.667{\rm e}-1 & -1.538{\rm e}-1\\
			\hline
			predicted& -3.612{\rm e}-1 & 2.410{\rm e}-1 & -2.384{\rm e}-1 & 2.724{\rm e}-1 & -1.855{\rm e}-1& 3.700e-1 & -1.045{\rm e}-2\\
			\midrule

			  & $\gamma_{14}$ & $\gamma_{15}$ & $\gamma_{16}$ & $\gamma_{17}$ & $\gamma_{18}$ & $\gamma_{19}$ & $\gamma_{20}$\\
			\hline
			exact  & 1.429{\rm e}-1 & -1.333{\rm e}-1  &  1.250{\rm e}-1  & -1.176{\rm e}-1 & 1.111{\rm e}-1 & -1.053{\rm e}-1 & 1.000{\rm e}-1\\
			\hline
			predicted  & 2.761{\rm e}-1 & -2.082{\rm e}-1 & 2.867{\rm e}-1 & 1.239{\rm e}-3& 1.961{\rm e}-1 & 7.774{\rm e}-2 & 4.955{\rm e}-1\\
	        \bottomrule
		\end{tabular}
\end{threeparttable}
\end{table}

\begin{table}[hbt!]
	\centering
 \begin{threeparttable}
     \caption{\label{table:converging rate} Convergence of the SEPINN-C approximation with respect to the truncation level $N$ for Example \ref{exam:3d-lshape}.}
	\begin{tabular}{c|cccc}
		\toprule
		$N$ & 5 & 10 & 15 & 20\\
        \midrule
        $e_{abs}$ & 1.76 & 0.797 & 1.42 & 0.937 \\
        \hline
		  $e$ & 7.14{\rm e}-2 & 3.23{\rm e}-2 & 5.77{\rm e}-2 & 3.80{\rm e}-2\\
        \midrule
        $e_{u,abs}$ & 1.78 & 0.835 & 1.44 & 0.951 \\
        \hline
        $e_u$ & 5.12{\rm e}-2 & 2.40{\rm e}-2 & 4.17{\rm e}-2 & 2.73{\rm e}-2\\
        \midrule
        $e_{S,abs}$ & 0.625 & 0.271 & 0.245 & 0.146 \\
        \hline
        $e_S$ & 2.55{\rm e}-2 & 1.11{\rm e}-2 & 1.00{\rm e}-2 & 5.95{\rm e}-3\\
		\bottomrule
  	\end{tabular}
 \end{threeparttable}
\end{table}

\begin{figure}[hbt!]
\centering\setlength{\tabcolsep}{0pt}
	\begin{tabular}{ccccc}
		\includegraphics[height=1.9cm]  {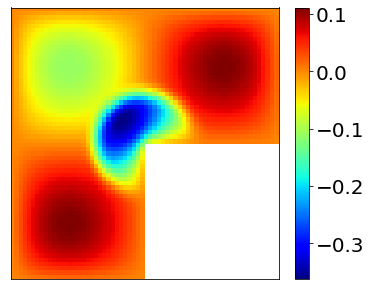} & \includegraphics[height=1.9cm]  {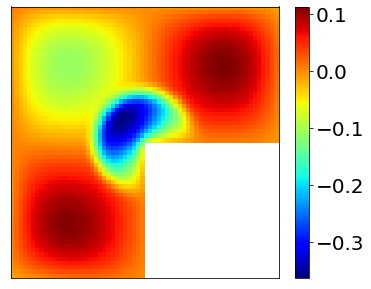} & \includegraphics[height=1.9cm]  {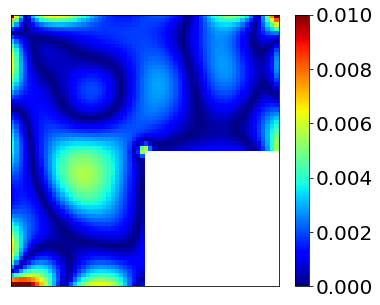} & \includegraphics[height=1.9cm]  {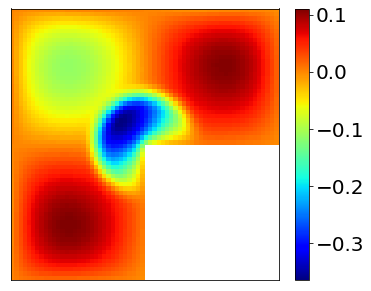} & \includegraphics[height=1.9cm]  {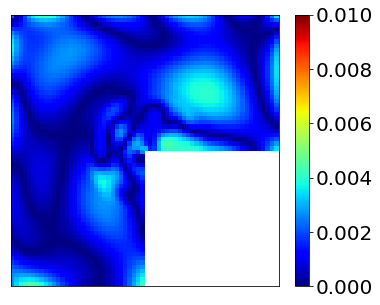}\\
		\includegraphics[height=1.9cm]  {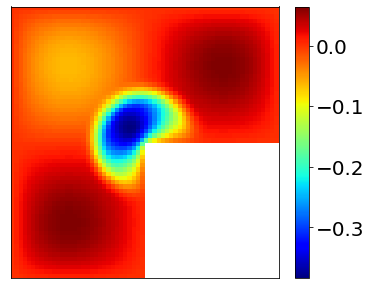} & \includegraphics[height=1.9cm]  {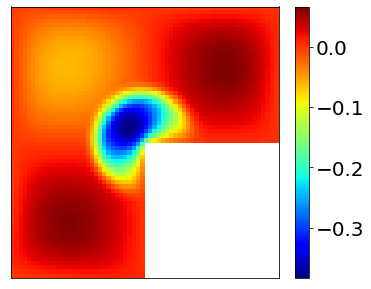} & \includegraphics[height=1.9cm]  {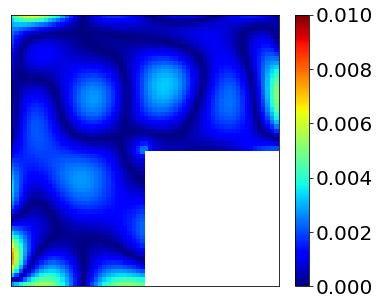} & \includegraphics[height=1.9cm]  {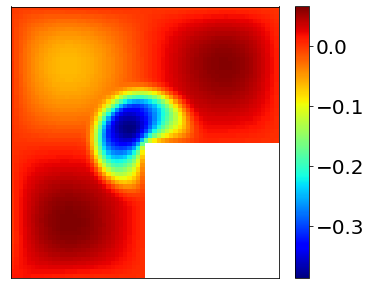} & \includegraphics[height=1.9cm]  {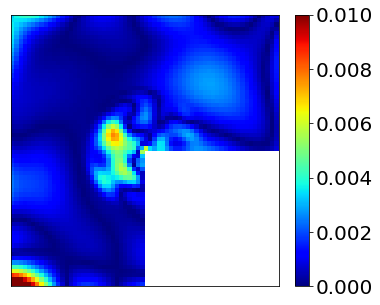}\\
		{\tiny(a)  exact} & {\tiny (b) SEPINN-C}  & {\tiny(c) error, SEPINN-C }& {\tiny (d) SEPINN-N} & {\tiny (e) error, SEPINN-N}
	\end{tabular}
	\caption{\label{fig:exam:3d-lshape} The SEPINN-C and SEPINN-N approximations for Example \ref{exam:3d-lshape},  slices at $z=\frac{1}{2}$ $($top$)$ and $z=\frac{1}{4}$ $($bottom$)$.}
\end{figure}

Fig. \ref{fig:exam:3d-lshape compare} compares the training dynamics for SEPINN-C and SEPINN-N. Fig. \ref{fig:exam:3d-lshape compare} (a) shows the convergence for the fist few flux intensity factors $\boldsymbol\gamma$,  all initialized to 1, which are far from the optimal values. Nonetheless, the algorithm converges to the optimal one steadily. Within a few hundred of iterations, the iterates approximate the exact one well. To accurately approximate $w$, more iterations are needed. Fig. \ref{fig:exam:3d-lshape compare} (b) shows the training dynamics for the DNN $\Phi_\zeta$, where $e_\Phi=\|\Phi^*-\Phi_\zeta\|_{L^2(G)}/\|\Phi^*\|_{L^2(G)}$ ($G$ is the support of the cut-off function $\eta_\rho$ and $\Phi^*$ is the exact one). We study the $L^2(G)$ error instead of the $L^2(\Omega)$ error, since $\eta_\rho$ localizes its influence to $G$. The error $e_\Phi$ eventually decreases to $ 10^{-2} $. The entire training process of the two methods is similar, cf.  Figs. \ref{fig:exam:3d-lshape compare} (c) and (d). SEPINN-N takes more iterations than SEPINN-C, but SEPINN-C training takes longer: SEPINN-C requires evaluating the coefficient $\gamma_n$, which incurs taking Laplacian of singular terms, whereas in SEPINN-N, all parameters are trained together.

\begin{figure}[hbt]
	\centering\setlength{\tabcolsep}{0pt}
	\begin{tabular}{cccc}
		\includegraphics[width=0.52\textwidth]  {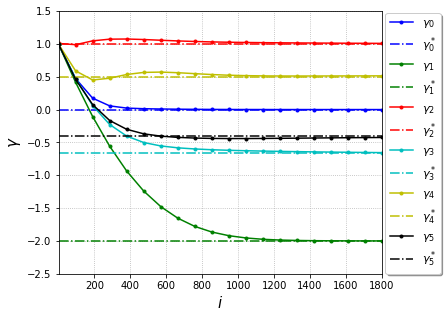} &
		\includegraphics[width=0.47\textwidth]  {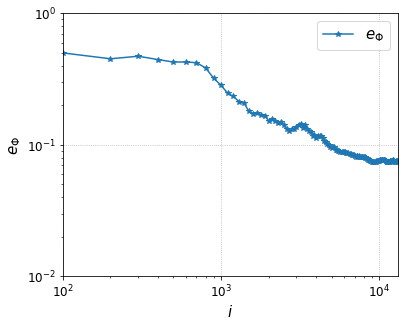} \\
       (a) $\gamma$ vs $i$ & (b) $e_\Phi$ vs $i$ \\ \includegraphics[width=0.47\textwidth]  {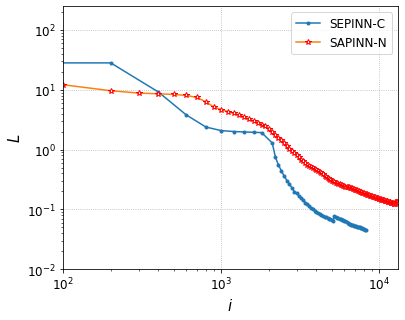} &
	    \includegraphics[width=0.47\textwidth]  {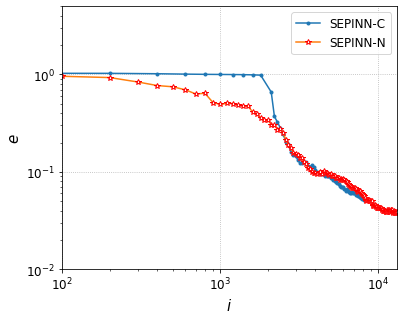}\\		
        (c) $\widehat{\mathcal{L}}$ vs $i$ & (d) $e$ vs $i$
	\end{tabular}
	\caption{\label{fig:exam:3d-lshape compare} The training dynamics of SEPINN-C and SEPINN-N: {\rm(a)} the variation of first few coefficients versus interation index $i$, {\rm(b)} the error $e_{\Phi}$ versus iteration index $i$, {\rm(c)} the decay of the loss $\widehat{\mathcal{L}}$ versus iteration index $i$, {\rm(d)} the error $e$ versus iteration index $i$.}
\end{figure}

The next example involves a combination of four singularities.
\begin{example}\label{exam:3d-singularities}
Let the domain $\Omega=(-\pi,\pi)^3$, $\Gamma_D=\{(x,-\pi,z):x\in(-\pi,0),z\in(-\pi,\pi)\}\cup\{(-\pi,y,z):y\in(-\pi,0),z\in(-\pi,\pi)\}\cup\{(x,\pi,z):x\in(0,\pi),z\in(-\pi,\pi)\}\cup\{(\pi,y,z):y\in(0,\pi),z\in(-\pi,\pi)\}$ and $\Gamma_D=\partial\Omega\backslash \Gamma_N$. Let the vertices $ \boldsymbol{v}_1:(0,-\pi) $, $ \boldsymbol{v}_2:(\pi,0) $, $ \boldsymbol{v}_3:(0,\pi) $ and $ \boldsymbol{v}_4:(-\pi,0) $. This problem has four geometric singularities at boundary edges $ \boldsymbol{v}_j\times(-\pi,\pi) $, where the type of the boundary condition changes from Dirichlet to Neumann with interior angles $ \omega_j =\pi,$ $ j=1,2,3,4$. Set
$ \Phi_j(r_j,z)=r-\ln(2\cosh r_j-2\cos z)=\sum_{n=1}^\infty \dfrac{2}{n}{\rm e}^{-nr_j}\cos nz$, $ j=1,2,3,4$,
and set $\rho_j=1$ and $R=\frac{1}{2}$ in \eqref{cutoff}, the source
    $	f=((\frac{4}{\pi^2}-\frac{12x^2}{\pi^4})(1-\frac{y^2}{\pi^2})^2 + (\frac{4}{\pi^2}-\frac{12y^2}{\pi^4})(1-\frac{x^2}{\pi^2})^2 + (1-\frac{x^2}{\pi^2})^2(1-\frac{y^2}{\pi^2})^2)\cos z-\sum_{j=1}^4\Delta(\Phi_j\eta_{\rho_j} s_j)$
with the singular functions $s_j=r_j^{\frac{1}{2}} \cos(\frac{ \theta_j}{2})$ for $j=1,3$ and $s_j=r_j^{\frac{1}{2}} \sin(\frac{ \theta_j}{2})$ for $j=2,4$. The exact solution $u$ is given by
   $ u=(1-\frac{x^2}{\pi^2})^2(1-\frac{y^2}{\pi^2})^2\cos z+\sum_{j=1}^4\Phi_j\eta_{\rho_j} s_j$.
\end{example}

\newcommand{\fight}{2.2cm}
\begin{figure}[hbt!]
	\centering\setlength{\tabcolsep}{0pt}
	\begin{tabular}{ccccc}
		\includegraphics[height=\fight]  {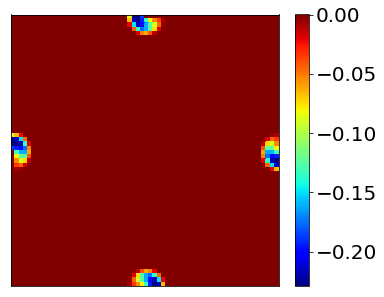} & \includegraphics[height=\fight]  {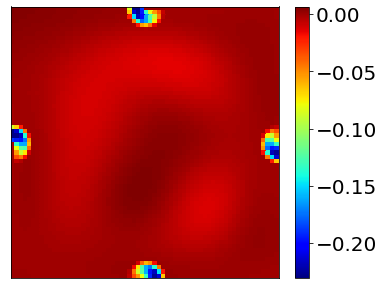} & \includegraphics[height=\fight]  {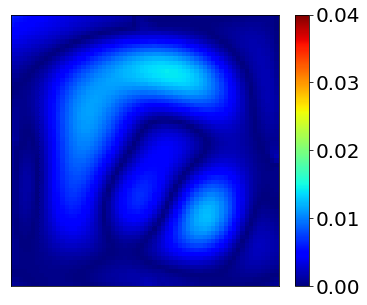} & \includegraphics[height=\fight]  {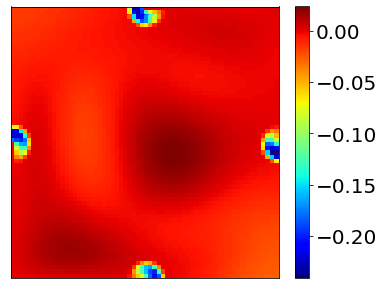} & \includegraphics[height=\fight]  {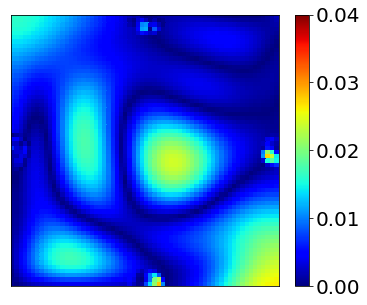}\\
		\includegraphics[height=\fight]  {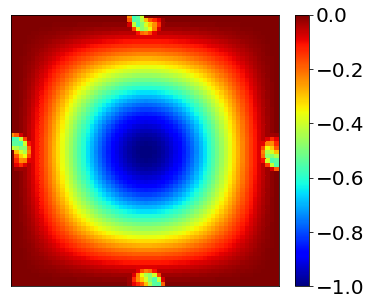} & \includegraphics[height=\fight]  {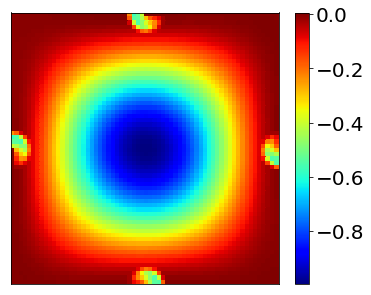} & \includegraphics[height=\fight]  {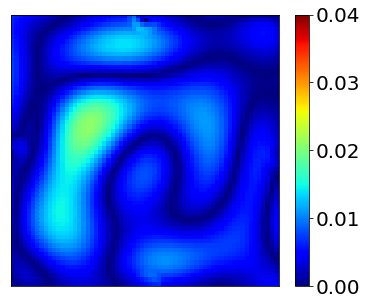} & \includegraphics[height=\fight]  {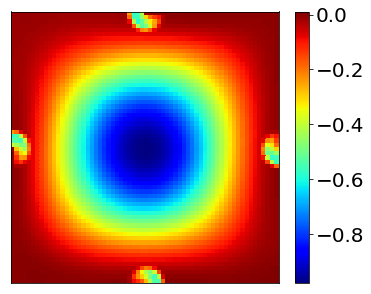} & \includegraphics[height=\fight]  {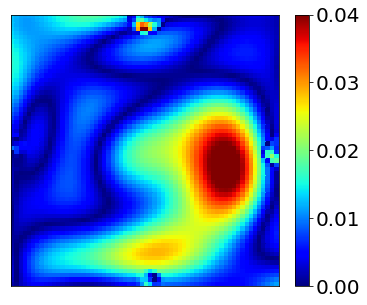}\\
		{\tiny(a)  exact} &{\tiny (b)  SEPINN-C}  &{\tiny (c)  error, SEPINN-C} &{\tiny  (d) SEPINN-N} &{\tiny (e)  error, SEPINN-N}
	\end{tabular}
	\caption{\label{fig:exam:3d-singularities} The SEPINN-C and SEPINN-N approximations for Example \ref{exam:3d-singularities},   slices at $z=\frac{\pi}{2}$ {\rm(}top{\rm)} and $z=\pi$ {\rm(}bottom{\rm)}.}
\end{figure}

This example requires learning more  parameters regardless of the method: for SEPINN-C, we have to expand  four singular functions and learn their coefficients, whereas for SEPINN-N, we employ five networks to approximate $w$ and $\Phi_{\zeta,j}\ (j=1,2,3,4)$. We take the number of sampling points $N_d=800$ and $N_n=1200$ on the boundaries $\Gamma_D$ and $\Gamma_N$, respectively, and $N_r=10000$ in the domain $\Omega$.

First we present the SEPINN-C approximation. Note that $\gamma_0^*=0$ and $\gamma_n^*=\frac{2}{n}$ for $n\in\mathbb{N}$. In SEPINN-C, we take a truncation level $N=15$ for all four singularities.
In the first stage, we take a learning rate $2.0\text{e-}3$ for $\theta$ and for coefficients $r_1=r_3=1.1\text{e-}2$, $r_2=r_4=7.0\text{e-}3$. The final prediction error $e$ is 2.08e-2, and the estimated coefficients $\widehat{\boldsymbol\gamma}^*$ are shown in Fig. \ref{fig:para2}. The first few coefficients are well approximated, but the high-order ones are less accurate.

\begin{figure}[hbt!]
    \centering
	\includegraphics[width=.5\textwidth]{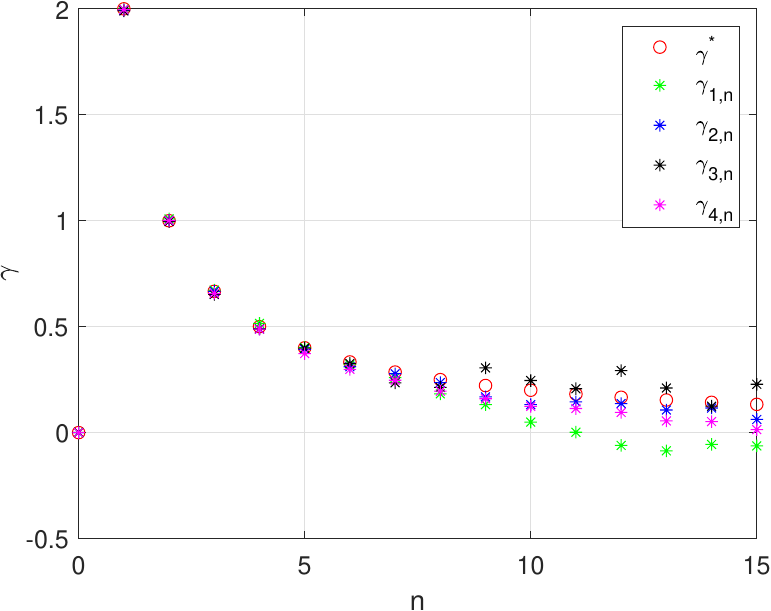}
	\caption{\label{fig:para2} The comparison between true and estimated values of the parameters $\gamma_{j,n}$ for Example \ref{exam:3d-singularities}.}
\end{figure}

Next we show the SEPINN-N approximation, obtained with five 4-layer 3-10-10-10-1 DNNs to approximate $w$ and $\Phi_{\zeta,j}\ (j=1,2,3,4)$ separately. The training process suffers from the following problem: using only L-BFGS tends to be trapped into a local minimum of the loss $\widehat{\mathcal{L}}_{\boldsymbol\sigma}$, which persists even after adjusting extensively the hyper-parameters. Therefore, we first train the DNNs with Adam (learning rate $r=4.0\text{e-}3$) for 1000 iterations and then switch to L-BFGS (learning rate $r=0.2$) for a maximum 4000 iterations. This training strategy greatly improves the accuracy. The prediction error $e$ after the second stage is 3.83e-2. The approximation is fairly accurate but slightly less accurate than that by SEPINN-C in Fig. \ref{fig:exam:3d-singularities} in both $L^\infty(\Omega)$ and $L^2(\Omega)$ norms.

Last, we illustrate SEPINN on the Laplacian eigenvalue problem.
\begin{example}\label{exam:eigen}
Let $\Omega=(-1,1)^2\backslash(\left[0,1\right)\times\left(-1,0\right])$. Consider the following Dirichlet Laplace eigenvalue problem:
$-\Delta u = \mu u$, in $\Omega$ and $u=0$ on $ \partial\Omega$,
where $ \mu >0$ is the eigenvalue and $u\not\equiv0$ is the corresponding eigenfunction.
\end{example}

The eigenvalue problem on an  L-shaped domain has been studied extensively \cite{doi:10.1137/0704008,doi:10.1137/120878446,YUAN20091083}. The eigenfunctions
may have singularity around the reentrant corner, but the analytic forms appear unavailable: the first eigenfunction $ u_1$ has a leading singular term $ r^{\frac{2}{3}}\sin(\frac{2}{3}\theta)$, the second
one $ u_2 $ has $ r^{\frac{4}{3}}\sin(\frac{4}{3}\theta)$ \cite{doi:10.1137/0704008}, and the third $u_3$ is analytic, given by  $ u_3(x_1,x_2) =\sin(\pi x_1)\sin(\pi x_2) $. We compute the first two leading eigenpairs $(\mu_1,u_1)$ and $(\mu_2,u_2)$. The preceding discussions indicate $u_1\in H^1(\Omega)$ and $u_2\in H^2(\Omega)$, and we split the leading singularity from both functions in order to benefit from SEPINN, i.e.,
$u_i=w_i+\gamma_i\eta_\rho s$,
with $ s=r^{\frac{2}{3}}\sin(\frac{2\theta}{3}) $ and $w_i$ is approximated by a DNN.
Following the ideas in \cite{10.1162/necoa01583} and SEPINN, we employ the following loss
\begin{align}
	\mathcal{L}_{\boldsymbol\sigma}(w_1,&w_2;\gamma_1,\gamma_2)
	=\sum_{i=1}^{2}\big(\|\Delta (w_i+\gamma_i\eta_\rho s)+\psi(u_i)(w_i+\gamma_i\eta_\rho s)\|_{L^2(\Omega)}^2+\sigma_1\|w_i\|_{L^2(\partial\Omega)}^2\nonumber\\
   &+\alpha\big|\|w_i+\gamma_i\eta_\rho s\|_{L^2(\Omega)}^2-1 \big |+\nu_i\psi(u_i)\big)
   +\beta\left|(w_1+\gamma_1\eta_\rho s,w_2+\gamma_2\eta_\rho s)_{L^2(\Omega)}\right|.\nonumber
\end{align}
where $\alpha$, $\nu_i(i=1,2)$ and $\beta$ are hyper-parameters and $\psi(u_i)$ is the Rayleigh quotient:
\begin{equation}\label{rayleigh}
\psi(u_i)={\|\nabla u_i\|^2_{L^2(\Omega)}} / {\|u_i\|^2_{L^2(\Omega)}},\quad i=1,2,
\end{equation}
which estimates the eigenvalue $\mu_i$ using the eigenfunction $u_i$, by Rayleigh's principle. We employ an alternating iteration method: we first approximate the eigenfunction $u_i$ by minimizing the loss $\mathcal{L}_\sigma$ and then update the eigenvalue $\mu_i$ by \eqref{rayleigh}, which is then substituted back into $\mathcal{L}_\sigma$. These two steps are repeated until convergence.

In SEPINN, we employ two 2-10-10-10-10-10-10-1 DNNs to approximate the regular parts $w_1$ and $w_2$, and take $\alpha=100$, $\beta=135$, $\nu_1=0.02$ and $\nu_2=0.01$, and determine the parameter $\sigma_d$ by the PF strategy. We use Adam with a learning rate $ 2\text{e-}3 $ for all DNN parameters and $\boldsymbol\gamma$.
Table \ref{table:eigen} shows that singularity enrichment helps solve the eigenvalue problem. Indeed, we can approximate $u_1$ better and get more accurate eigenvalue estimates. Note that during the training process of the standard PINN, the DNN approximation actually directly approaches $u_2$ and cannot capture $u_1$, since it cannot be resolved accurately. Even with a larger penalty $\nu_1$ for the first eigenvalue, this does not improve the accuracy, while $u_2$ can be approximated well due to the better regularity of $u_2$, i.e., $u_2\in H^2(\Omega)$.

\begin{table}[H]
\centering
\begin{threeparttable}
\caption{\label{table:eigen} The estimated eigenvalues by PINN and SEPINN for Example \ref{exam:eigen}.}
	\centering
	\begin{tabular}{ccccc}
		\toprule
		eigenvalue & reference \cite{doi:10.1137/0704008} & SEPINN & stress intensity factor & PINN \\
		\midrule
		$\mu_1$ & 9.6397 & 9.6226 & $\gamma_1$=1.82{\rm e}-2 & 10.4840\\
		\hline
		$\mu_2$ & 15.1973 & 15.2100 & $\gamma_2$=8.71{\rm e}-3 & 15.0462\\
		\bottomrule
	\end{tabular}
\end{threeparttable}
\end{table}

\section{Conclusions}\label{sec:concl}
In this work, we have developed a family of novel neural solvers for second-order elliptic boundary value problems with geometric singularities, e.g., corner singularity and mixed boundary conditions in the 2D case, and edge singularities in the 3D case. The basic idea is to enrich the ansatz space spanned by deep neural networks, by incorporating suitable singular functions. These singular functions are designed to capture  leading singularities of the exact solution. In so doing, we can obtain more accurate approximations.  We discuss several variants of the method, depending on the specific scenarios, and discuss the extensions to the eigenvalue problem. Additionally, we provide preliminary theoretical guarantees of the approach.  Numerical experiments indicate that the approach is indeed highly effective and flexible, and works for a broad range of problem settings.

There are several directions for further research. First, one interesting question is to extend the approach to more general singularities, e.g., conic singularities, singularities along curved edges, rotating solids $\Omega=\Omega_0\times(0,l)$ (with $\Omega_0$ depending on the height) and higher-dimensional cases, as well as more complex problems, e.g., Maxwell system and linear elasticity system. The key is to identify the singularity functions. Note that in principle, the singularity functions can be approximate, e.g., computed using FEM. Second, it is of importance to study more closely the optimization aspects of the approach. It is already challenging for standard neural solvers \cite{Krishnapriyan:2021,WangPerdikaris:2022jcp}, the issue appears even more delicate for SEPINN, as indicated by the numerical experiments. Third, it is of much interest to design techniques that allow imposing the boundary conditions exactly, which would facilitate deriving stronger and better estimates.

\section*{Acknowledgements} The authors are grateful to two anonymous referees for their many constructive comments which have led to a significant improvement of the quality of the paper.

\appendix

\section{Technical preliminaries}
In this appendix, we collect several preliminary results that are used in the error analysis of the proposed SEPINN. The Rademacher complexity \cite{AnthonyBartlett:1999,BartlettMendelson:2002} measures the complexity of a collection of functions
by the correlation between function values with Rademacher random variables, i.e., with probability $P(\omega=1)=P(\omega=-1)=\frac12$.
\begin{definition}\label{def: Rademacher}
Let $\mathcal{F}$ be a real-valued function class defined on the domain $D$ and $\xi=\{\xi_j\}_{j=1}^n$ be i.i.d. samples from the distribution $\mathcal{U}(D)$.
Then the Rademacher complexity $\mathfrak{R}_n(\mathcal{F})$ of $\mathcal{F}$ is defined by
\begin{equation*}
\mathfrak{R}_n(\mathcal{F})=\mathbb{E}_{\xi,\omega}\bigg{[}\sup_{f\in\mathcal{F}}\ n^{-1}\bigg{\lvert}\ \sum_{j=1}^{n}\omega_j f(\xi_j)\ \bigg{\rvert} \bigg{]},
\end{equation*}
where $\omega=\{\omega_j\}_{j=1}^n$ are i.i.d Rademacher random variables.
\end{definition}

Then we have the following PAC-type generalization bound \cite[Theorem 3.1]{Mohri:2018}.
\begin{lemma}\label{lem:PAC}
Let $X_1,\ldots,X_n$ be a set of i.i.d. random variables, and let $\mathcal{F}$
be a function class defined on $D$ such that $\sup_{f\in\mathcal{F}}\|f\|_{L^\infty(D)}\leq M_\mathcal{F}<\infty$. Then for any $\tau\in(0,1)$, with probability at least
$1-\tau$:
\begin{equation*}
  \sup_{f\in \mathcal{F}}\bigg|n^{-1}\sum_{j=1}^n f(X_j)-\mathbb{E}[f(X)]\bigg| \leq 2\mathfrak{R}_n(\mathcal{F}) + 2M_\mathcal{F}\sqrt{\frac{\log\frac{1}{\tau}}{2n}}.
\end{equation*}
\end{lemma}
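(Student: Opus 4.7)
The plan is to establish this standard uniform deviation bound via the two classical ingredients of empirical process theory: McDiarmid's bounded differences inequality, which yields concentration of the supremum around its mean, and symmetrization via ghost samples, which bounds that mean by the Rademacher complexity. Setting $\Phi(X_1,\ldots,X_n) := \sup_{f\in\mathcal{F}} |n^{-1}\sum_{j=1}^n f(X_j) - \mathbb{E}[f(X)]|$, I would split the argument into two steps: (i) show that $\Phi$ concentrates sharply around $\mathbb{E}[\Phi]$, and (ii) bound $\mathbb{E}[\Phi] \le 2\mathfrak{R}_n(\mathcal{F})$. Summing the two contributions gives exactly the stated inequality.

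\textbf{Concentration step.} The key observation is that $\Phi$ satisfies the bounded differences property with constants $c_j = 2M_\mathcal{F}/n$. Indeed, replacing $X_j$ by an independent $X_j'$ changes each individual empirical average by at most $2M_\mathcal{F}/n$, since $\|f\|_{L^\infty(D)} \le M_\mathcal{F}$; taking the supremum over $f \in \mathcal{F}$ and using $|\sup a - \sup b| \le \sup|a-b|$ preserves the bound. McDiarmid's inequality then gives $\mathbb{P}(\Phi \ge \mathbb{E}[\Phi] + t) \le \exp(-nt^2/(2M_\mathcal{F}^2))$, and choosing $t = 2M_\mathcal{F}\sqrt{\log(1/\tau)/(2n)}$ makes the right-hand side equal to $\tau$. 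Hence, with probability at least $1-\tau$, $\Phi \le \mathbb{E}[\Phi] + 2M_\mathcal{F}\sqrt{\log(1/\tau)/(2n)}$.

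\textbf{Symmetrization step.} I would introduce an independent ghost sample $X_1',\ldots,X_n'$ distributed identically to $X_1,\ldots,X_n$. Writing $\mathbb{E}[f(X)] = \mathbb{E}_{X'}[n^{-1}\sum_j f(X_j')]$ and pulling the inner expectation outside the supremum via Jensen's inequality yields $\mathbb{E}[\Phi] \le \mathbb{E}[\sup_{f\in\mathcal{F}} |n^{-1}\sum_{j=1}^n (f(X_j) - f(X_j'))|]$. Since each summand $f(X_j)-f(X_j')$ is symmetrically distributed, I can insert i.i.d.\ Rademacher signs $\omega_j$ into the sum without altering the joint law, and a triangle inequality split then produces two copies of $\mathfrak{R}_n(\mathcal{F})$, giving $\mathbb{E}[\Phi] \le 2\mathfrak{R}_n(\mathcal{F})$. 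Combining this with the concentration bound yields the lemma.

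The only subtle point is the measurability of the supremum $\Phi$, which is required to apply the probabilistic tools rigorously. This holds under mild separability assumptions on $\mathcal{F}$, trivially satisfied in the downstream use where $\mathcal{F}$ is parametrized by a compact subset of $\mathbb{R}^{N_\theta}$ with continuous dependence on the parameters. Neither the bounded differences check nor the symmetrization step presents any real obstacle; the lemma is a black-box tool, and its interest here lies entirely in its downstream deployment in Lemma \ref{lem:err-stat}, where it is combined with Dudley's entropy integral formula and the Lipschitz estimates of Lemmas \ref{lem:NN-Lip} and \ref{lem:fcn-Lip} to produce explicit statistical error bounds for the SEPINN loss discrepancy.
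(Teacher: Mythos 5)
Your proof is correct; the paper does not actually supply an argument for this lemma but simply cites it as Theorem~3.1 of Mohri, Rostamizadeh and Talwalkar (2018), and the McDiarmid-plus-symmetrization argument you give (with the bounded-differences constant $2M_\mathcal{F}/n$ to account for $\|f\|_{L^\infty}\le M_\mathcal{F}$, and Rademacher signs inserted thanks to the symmetric distribution of $f(X_j)-f(X_j')$) is precisely the standard proof underlying that reference. Your constant bookkeeping also checks out: $t=M_\mathcal{F}\sqrt{2\log(1/\tau)/n}=2M_\mathcal{F}\sqrt{\log(1/\tau)/(2n)}$, and because the paper's Definition of $\mathfrak{R}_n$ already places the absolute value inside the supremum, the two-sided bound follows directly without a union bound over $\pm f$.
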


To apply Lemma \ref{lem:PAC}, we bound Rademacher complexities of the function classes $\mathcal{H}_r$, $\mathcal{H}_d$ and $\mathcal{H}_n$.
This follows from Dudley's formula in Lemma \ref{lem:Dudley}.
The next lemma gives useful boundedness and Lipschitz continuity of the DNN function class in terms of $\theta$; see \cite[Lemma 3.4 and Remark 3.3]{JinLiLu:2022} and \cite[Lemma 5.3]{Jin:DNN-Control}.
The estimates also hold when $L^\infty(\Omega)$ is replaced with $L^\infty(\Gamma_D)$ or $L^\infty(\Gamma_N)$. The notation $L^\infty(\Omega;\mathbb{R}^d)$ denotes the $L^\infty(\Omega)$ norm for $\mathbb{R}^d$-valued functions. Note that $\|v_\theta\|_{L^\infty(\Omega)}$ is independent of the depth $L$ since the activation function $ \varrho$ satistifies $\| \varrho\|_{L^\infty(\mathbb{R})}\leq 1$.
\begin{lemma}\label{lem:NN-Lip}
Let $L$, $W$ and $B_\theta$ be the depth, width and maximum weight bound of a DNN function class $\mathcal{W}$, with $N_\theta$ nonzero weights. Then for any $v_\theta\in\mathcal{W}$, the following estimates hold
\begin{enumerate}
\item[{\rm(i)}] $\|v_\theta\|_{L^\infty(\Omega)}\leq WB_\theta$,   $\|v_\theta-v_{\tilde{\theta}}\|_{L^\infty(\Omega)}\leq 2LW^LB_\theta^{L-1}|\theta-\tilde\theta|_{\ell^\infty}$;
\item[{\rm(ii)}] $\|\nabla v_\theta\|_{L^\infty(\Omega; \mathbb{R}^d)}\leq \sqrt{2}W^{L-1}B_\theta^L$,
$\|\nabla (v_\theta-v_{\tilde{\theta}})\|_{L^\infty(\Omega; \mathbb{R}^d)}\leq  \sqrt{2}L^2W^{2L-2}B_\theta^{2L-2}|\theta-\tilde\theta|_{\ell^\infty}$;
\item[{\rm(iii)}] $\|\Delta v_\theta\|_{L^\infty(\Omega)}\leq 2LW^{2L-2} B_\theta^{2L} $,
$\|\Delta (v_\theta-v_{\tilde{\theta}})\|_{L^\infty(\Omega)}\leq  8N_\theta L^2W^{3L-3}B_\theta^{3L-3}|\theta-\tilde\theta|_{\ell^\infty}$.
\end{enumerate}
\end{lemma}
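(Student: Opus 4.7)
The plan is to prove all six estimates by induction over the network depth, using the fact that the hyperbolic tangent activation $\varrho=\tanh$ satisfies $\|\varrho\|_{L^\infty(\mathbb{R})}\leq 1$, $\|\varrho'\|_{L^\infty(\mathbb{R})}\leq 1$, and $\|\varrho''\|_{L^\infty(\mathbb{R})}\leq c$ for an absolute constant $c$. Throughout, I will keep track of the dependence on $L$, $W$, $B_\theta$, and (for the Laplacian estimate) $N_\theta$, and I will repeatedly exploit that each hidden layer feature vector $f^{(\ell)}=\varrho(A_\ell f^{(\ell-1)}+b_\ell)$ is componentwise bounded by $1$ in absolute value for $1\leq \ell\leq L-1$.

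For the boundedness claims in (i)--(iii), first I would establish the layerwise bounds $\|f^{(\ell)}\|_{L^\infty(\Omega;\mathbb{R}^{n_\ell})}\leq 1$ for $\ell<L$. The output bound $\|v_\theta\|_{L^\infty(\Omega)}\leq WB_\theta$ then follows from $v_\theta=A_Lf^{(L-1)}+b_L$. For $\nabla v_\theta$, I differentiate $f^{(\ell)}$ in $x$ and obtain $\nabla f^{(\ell)}=\mathrm{diag}(\varrho'(A_\ell f^{(\ell-1)}+b_\ell))A_\ell \nabla f^{(\ell-1)}$; induction on $\ell$ using $|\varrho'|\leq 1$ and $\|A_\ell\|\leq WB_\theta$ gives $\|\nabla f^{(\ell)}\|_{L^\infty}\leq W^\ell B_\theta^\ell$, from which the stated gradient bound follows. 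For $\Delta v_\theta$, I would differentiate once more and use $|\varrho''|\leq c$ together with the already-established bound on $\nabla f^{(\ell)}$; each of the $d$ second derivatives introduces at most $L$ product terms from the two applications of the chain rule, yielding the $2LW^{2L-2}B_\theta^{2L}$ bound.

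For the Lipschitz estimates in $\theta$, the standard device is a telescoping argument: write
\begin{equation*}
v_\theta-v_{\tilde\theta}=\sum_{\ell=1}^{L}\bigl(v_{(\tilde\theta_1,\ldots,\tilde\theta_{\ell-1},\theta_\ell,\ldots,\theta_L)}-v_{(\tilde\theta_1,\ldots,\tilde\theta_\ell,\theta_{\ell+1},\ldots,\theta_L)}\bigr),
\end{equation*}
so that each summand differs in exactly one layer's weight/bias. For the difference in a single layer I use the mean value inequality $|\varrho(a)-\varrho(b)|\leq |a-b|$ to peel off the activation, bound the propagated perturbation through subsequent affine maps by $(WB_\theta)^{L-\ell}$, and bound the injected perturbation by $W|\theta-\tilde\theta|_{\ell^\infty}$. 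Summing over $\ell$ and absorbing lower-order factors gives the stated $2LW^LB_\theta^{L-1}$ prefactor. The gradient and Laplacian Lipschitz bounds follow from the same telescoping, but now one must differentiate the layerwise differences in $x$ once or twice using $|\varrho'|\leq 1$ and $|\varrho''|\leq c$, and apply a product-rule expansion. For the Laplacian, differentiating the two activations twice yields $O(N_\theta)$ terms coming from the various index choices of the entries where the perturbation acts, which is the source of the extra $N_\theta$ factor in (iii).

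The main technical obstacle is the bookkeeping in the Laplacian Lipschitz bound in (iii): one must simultaneously track a second $x$-derivative and a first $\theta$-derivative through $L$ layers, and the product rule produces a forest of terms whose number and magnitude must be controlled. The two cleanest ways to handle it are either (a) to compute $\partial_\theta \Delta v_\theta$ directly, bound it in $\ell^\infty$ by iterating the chain rule and invoking the previously proved bounds on $f^{(\ell)}$, $\nabla f^{(\ell)}$ and $\nabla^2 f^{(\ell)}$, and then integrate along a segment in $\theta$-space; or (b) to telescope over both layers and the sparsity pattern, which gives the factor $N_\theta$ naturally. Either way, the final constants match those stated in \cite{JinLiLu:2022} and \cite{Jin:DNN-Control}, which I would cite for the detailed verification rather than reproducing the full combinatorial accounting here.
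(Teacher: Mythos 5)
The paper gives no proof of this lemma at all: it is stated as a technical preliminary and justified solely by citing \cite[Lemma 3.4 and Remark 3.3]{JinLiLu:2022} and \cite[Lemma 5.3]{Jin:DNN-Control}. Your sketch — layerwise induction using $\|\varrho\|_{L^\infty},\|\varrho'\|_{L^\infty}\leq 1$ for the boundedness claims, and a telescoping decomposition over layers for the Lipschitz-in-$\theta$ claims, with the $N_\theta$ factor in (iii) arising from the product-rule bookkeeping — is exactly the standard argument underlying those references, and since you too defer the final combinatorial verification to the same citations, your route coincides with the paper's.
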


Lemma \ref{lem:NN-Lip} implies the uniform boundedness and Lipschitz continuity of functions in $\mathcal{H}_r$, $\mathcal{H}_d$ and $\mathcal{H}_n$.
\begin{lemma}\label{lem:fcn-Lip}
There exists a constant $c$ depending on $\| f \|_{L^\infty(\Omega)}$, $\|\Delta(\eta_\rho s)\|_{L^\infty(\Omega)}$ and $B_\gamma$ such that
\begin{align*}
  \|h( w_\theta,\gamma)\|_{L^\infty(\Omega)} &\le c L^2 W^{4L-4}B_\theta^{4L}, \quad \forall h \in \mathcal{H}_r,\\
  \|h(w_\theta )\|_{L^\infty(\Gamma_D)} &\le c W^2B_\theta^2,\quad \forall  h \in \mathcal{H}_d,\\
   \|h( w_\theta)\|_{L^\infty(\Gamma_N)} &\le cW^{2L-2}B_\theta^{2L},\quad  \forall h \in \mathcal{H}_n.
\end{align*}
Moreover, the following Lipschitz continuity estimates hold
\begin{align*}
\|h(w_\theta,\gamma) - \tilde h(w_{\tilde\theta},\title{\gamma}) \|_{L^\infty(\Omega)} &\le cN_\theta L^3W^{5L-5}B_\theta^{5L-3}(| \theta - \tilde\theta|_{\ell^\infty}+| \gamma-\tilde \gamma|),\quad \forall  h,\tilde h \in \mathcal{H}_r,\\
\|h( w_\theta )- h(w_{\tilde\theta})\|_{L^\infty(\Gamma_D)} &\le cLW^{L+1}B_\theta^L| \theta - \tilde \theta  |_{\ell^\infty},\quad  \forall  h,\tilde h \in \mathcal{H}_d,\\
  \|h( w_\theta)-\tilde h(w_{\tilde\theta})\|_{L^\infty(\Gamma_N)} &\le c L^2 W^{3L-3} B_\theta^{3L-2}| \theta - \tilde \theta  |_{\ell^\infty}, \quad \forall h,\tilde h \in \mathcal{H}_n.
\end{align*} \end{lemma}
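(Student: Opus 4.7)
The proof reduces to combining the pointwise bounds and Lipschitz estimates for $w_\theta$, $\nabla w_\theta$, $\Delta w_\theta$ from Lemma \ref{lem:NN-Lip} with the elementary identity $a^2-b^2=(a+b)(a-b)$. Since $h_r$, $h_d$ and $h_n$ are all squares of affine expressions in the DNN and its derivatives, the uniform bounds follow immediately from the first part of Lemma \ref{lem:NN-Lip} and the triangle inequality, using $\|f\|_{L^\infty(\Omega)}<\infty$, $\|\Delta(\eta_\rho s)\|_{L^\infty(\Omega)}<\infty$ and $|\gamma|\le B_\gamma$. For instance, for $h\in\mathcal{H}_r$ one writes
\begin{equation*}
|h_r(w_\theta,\gamma)|\le \bigl(\|\Delta w_\theta\|_{L^\infty(\Omega)}+\|f\|_{L^\infty(\Omega)}+B_\gamma\|\Delta(\eta_\rho s)\|_{L^\infty(\Omega)}\bigr)^2
\end{equation*}
and applies the bound $\|\Delta w_\theta\|_{L^\infty(\Omega)}\le 2LW^{2L-2}B_\theta^{2L}$, which dominates the other two terms and yields the stated $cL^2W^{4L-4}B_\theta^{4L}$ bound. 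The estimates for $\mathcal{H}_d$ and $\mathcal{H}_n$ are analogous using $\|w_\theta\|_{L^\infty}\le WB_\theta$ and $\|\nabla w_\theta\|_{L^\infty}\le \sqrt{2}W^{L-1}B_\theta^L$.

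For the Lipschitz estimates, I set $A=\Delta w_\theta+f+\gamma\Delta(\eta_\rho s)$ and $B=\Delta w_{\tilde\theta}+f+\tilde\gamma\Delta(\eta_\rho s)$, so that $h_r(w_\theta,\gamma)-h_r(w_{\tilde\theta},\tilde\gamma)=(A+B)(A-B)$. The factor $A+B$ is uniformly bounded by $cLW^{2L-2}B_\theta^{2L}$ by the first part, while
\begin{equation*}
|A-B|\le \|\Delta(w_\theta-w_{\tilde\theta})\|_{L^\infty(\Omega)}+\|\Delta(\eta_\rho s)\|_{L^\infty(\Omega)}|\gamma-\tilde\gamma|
\end{equation*}
which is bounded by $c(N_\theta L^2W^{3L-3}B_\theta^{3L-3}|\theta-\tilde\theta|_{\ell^\infty}+|\gamma-\tilde\gamma|)$ via Lemma \ref{lem:NN-Lip}(iii). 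Multiplying gives the stated exponent $N_\theta L^3W^{5L-5}B_\theta^{5L-3}$, where the power of $B_\theta$ is $2L+(3L-3)=5L-3$ (and the $\gamma$-contribution is dominated since $B_\theta\ge 1$). The estimates for $\mathcal{H}_d$ and $\mathcal{H}_n$ proceed identically: write $w_\theta^2-w_{\tilde\theta}^2=(w_\theta+w_{\tilde\theta})(w_\theta-w_{\tilde\theta})$ and use parts (i) and (ii) of Lemma \ref{lem:NN-Lip} to reach $cLW^{L+1}B_\theta^L|\theta-\tilde\theta|_{\ell^\infty}$ and $cL^2W^{3L-3}B_\theta^{3L-2}|\theta-\tilde\theta|_{\ell^\infty}$ respectively.

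There is no real obstacle; the only subtle point is bookkeeping of the exponents in $W$, $B_\theta$ and $L$ to match the claimed rates, and being careful that $|\theta-\tilde\theta|_{\ell^\infty}$ and $|\gamma-\tilde\gamma|$ enter on the same footing in $\mathcal{H}_r$ (which is why the stated bound takes the max norm on $\mathbb{R}^{N_\theta}\times\mathbb{R}$). Everything else is a direct substitution into the algebraic identity for the difference of squares together with the triangle inequality, so the proof will be short.
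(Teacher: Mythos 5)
Your proposal is correct and follows exactly the route the paper intends: the paper's own proof is the one-line remark that all estimates are direct from Lemma \ref{lem:NN-Lip}, and your argument simply fleshes this out via the difference-of-squares factorization, the triangle inequality, and the bounds in parts (i)--(iii) of that lemma, with the exponent bookkeeping checking out in every case. No gaps.
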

\begin{proof}
All the estimates are direct from Lemma \ref{lem:NN-Lip}.
\end{proof}

Next, we state Dudley's lemma (\cite[Theorem 9]{lu2021priori}, \cite[Theorem 1.19]{wolf2018mathematical}), which bounds Rademacher complexities using covering number.
\begin{definition}
Let $(\mathcal{M},m)$ be a metric space of real valued functions, and $\mathcal{G}\subset \mathcal{M}$. A set $\{x_i\}_{i=1}^n \subset \mathcal{G}$ is called an $\epsilon$-cover of $\mathcal{G}$ if for any $x\in \mathcal{G}$, there exists an element $x_i\in \{x_i\}_{i=1}^n$ such that $m(x, x_i) \leq \epsilon$. The
$\epsilon$-covering number $\mathcal{C}(\mathcal{G}, m, \epsilon)$ is the minimum
cardinality among all $\epsilon$-covers of $\mathcal{G}$ with respect to $m$.
\end{definition}

\begin{lemma}\label{lem:Dudley}
Let $M_\mathcal{F}:=\sup_{f\in\mathcal{F}} \|f\|_{L^{\infty}(\Omega)}$, and $\mathcal{C}(\mathcal{F},\|\cdot\|_{L^{\infty}(\Omega)},\epsilon)$ be the covering number of $\mathcal{F}$. Then the Rademacher complexity $\mathfrak{R}_n(\mathcal{F})$ is bounded by
\begin{equation*}
\mathfrak{R}_n(\mathcal{F})\leq\inf_{0<s< M_\mathcal{F}}\bigg(4s\ +\ 12n^{-\frac12}\int^{M_\mathcal{F}}_{s}\big(\log\mathcal{C}(\mathcal{F},\|\cdot\|_{L^{\infty}(\Omega)},\epsilon)\big)^{\frac12}\ {\rm d}\epsilon\bigg).
\end{equation*}
\end{lemma}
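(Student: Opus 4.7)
The plan is to give the classical Dudley chaining argument, approximating each $f\in\mathcal F$ by a sequence of elements of ever-finer $\epsilon$-nets, telescoping, and applying Massart's finite-class maximal inequality at each scale.

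First, fix $s\in(0,M_\mathcal F)$ and set the geometric schedule $\epsilon_k=M_\mathcal F 2^{-k}$ for $k=0,1,2,\ldots,K$, where $K$ is the smallest integer with $\epsilon_K\le s$. For each $k$, let $\mathcal N_k\subset\mathcal F$ be a minimal $\epsilon_k$-cover in the $\|\cdot\|_{L^\infty(\Omega)}$ metric, so $|\mathcal N_k|=\mathcal C(\mathcal F,\|\cdot\|_{L^\infty(\Omega)},\epsilon_k)$, and let $\pi_k:\mathcal F\to\mathcal N_k$ be a nearest-point projection, so $\|f-\pi_k(f)\|_{L^\infty(\Omega)}\le\epsilon_k$. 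Since $\epsilon_0=M_\mathcal F$, we may take $\pi_0(f)\equiv0$.

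Second, I telescope each $f\in\mathcal F$ as
\begin{equation*}
f=\big(f-\pi_K(f)\big)+\sum_{k=1}^{K}\big(\pi_k(f)-\pi_{k-1}(f)\big),
\end{equation*}
and plug into the definition of $\mathfrak R_n(\mathcal F)$. The residual term contributes at most $\|f-\pi_K(f)\|_{L^\infty(\Omega)}\le\epsilon_K\le s$ inside the supremum, yielding an offset of order $s$. For each layer $k\ge 1$, the increment $\pi_k(f)-\pi_{k-1}(f)$ ranges over a finite family of at most $|\mathcal N_k|\cdot|\mathcal N_{k-1}|\le|\mathcal N_k|^2$ functions, each with $L^\infty(\Omega)$ norm bounded by $\epsilon_k+\epsilon_{k-1}=3\epsilon_k$. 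Massart's lemma (maximal inequality for sub-Gaussian averages over a finite family) then gives
\begin{equation*}
\mathbb E_\omega\sup_{f\in\mathcal F}n^{-1}\Big|\sum_{j=1}^n\omega_j\big(\pi_k(f)-\pi_{k-1}(f)\big)(\xi_j)\Big|\le 3\epsilon_k\sqrt{\tfrac{2\log|\mathcal N_k|^2}{n}}\le 6\epsilon_k\sqrt{\tfrac{\log|\mathcal N_k|}{n}}\cdot\sqrt{2}.
\end{equation*}

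Third, I convert the geometric sum over layers into an integral. Because $\mathcal C(\mathcal F,\|\cdot\|_{L^\infty(\Omega)},\cdot)$ is nonincreasing and $\epsilon_{k-1}-\epsilon_k=\epsilon_k$, a Riemann-sum comparison gives
\begin{equation*}
\sum_{k=1}^{K}\epsilon_k\sqrt{\log|\mathcal N_k|}\le\int_{s}^{M_\mathcal F}\sqrt{\log\mathcal C(\mathcal F,\|\cdot\|_{L^\infty(\Omega)},\epsilon)}\,\d\epsilon,
\end{equation*}
after tracking the constant from the ratio $1/2$ in the schedule. Combining the offset $s$ with the summed chaining contribution and taking the infimum over $s\in(0,M_\mathcal F)$ yields the stated bound
\begin{equation*}
\mathfrak R_n(\mathcal F)\le\inf_{0<s<M_\mathcal F}\Big(4s+12n^{-1/2}\int_{s}^{M_\mathcal F}\big(\log\mathcal C(\mathcal F,\|\cdot\|_{L^\infty(\Omega)},\epsilon)\big)^{1/2}\,\d\epsilon\Big).
\end{equation*}

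The main obstacle is bookkeeping the constants $4$ and $12$: one has to account carefully for the factor $\sqrt 2$ from Massart, the factor $3$ from $\epsilon_k+\epsilon_{k-1}$, and the geometric-to-integral conversion so that the final numerical coefficients match those in the statement. None of the steps is deep, but a single miscalibrated factor propagates through, so I would handle the chaining and the integral comparison at explicit ratio $1/2$ to pin the constants down. Apart from constants, the argument is standard and needs nothing beyond finiteness of $M_\mathcal F$ (already assumed) and sub-Gaussianity of Rademacher sums.
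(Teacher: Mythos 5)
The paper does not prove this lemma at all: it is quoted verbatim as Dudley's entropy-integral bound with citations to \cite[Theorem 9]{lu2021priori} and \cite[Theorem 1.19]{wolf2018mathematical}, so there is no in-paper argument to compare against. Your chaining proof is the standard route to exactly this statement, and its structure (geometric scales, nearest-point projections, telescoping, Massart at each layer with increment norm $\le 3\epsilon_k$ and cardinality $\le|\mathcal N_k|^2$, then a Riemann-sum comparison) is correct.

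The one place where your sketch, taken literally, would not reproduce the stated bound is the choice of the stopping index $K$, and it interacts with the constants you flag. If you stop at the \emph{smallest} $K$ with $\epsilon_K\le s$, the residual is indeed $\le s$, but the sum-to-integral comparison
$\sum_{k=1}^{K}\epsilon_k\sqrt{\log|\mathcal N_k|}\le 2\int_{\epsilon_{K+1}}^{\epsilon_1}\sqrt{\log\mathcal C(\epsilon)}\,\d\epsilon$
(which uses $\epsilon_k=2(\epsilon_k-\epsilon_{k+1})$ and the monotonicity of $\mathcal C$ on $[\epsilon_{k+1},\epsilon_k]$) then has lower limit $\epsilon_{K+1}\le s/2$, so you end up with $\int_{s/2}^{M_{\mathcal F}}$ rather than $\int_{s}^{M_{\mathcal F}}$. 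The calibration that yields the advertised constants is the opposite one: take $K$ to be the \emph{largest} index with $\epsilon_{K+1}\ge s$ (so $\epsilon_{K+2}<s\le\epsilon_{K+1}<2s$). Then the entropy integral runs from $\epsilon_{K+1}\ge s$, while the residual is only $\le\epsilon_K=2\epsilon_{K+1}<4s$ --- which is precisely where the $4s$ comes from. With that choice, the per-layer Massart bound $3\epsilon_k\sqrt{2\log(|\mathcal N_k|^2)/n}=6\epsilon_k\sqrt{\log|\mathcal N_k|/n}$ (your trailing extra $\sqrt2$ is not needed and would overshoot $12$) combines with the factor $2$ from the integral comparison to give exactly $12n^{-1/2}\int_s^{M_\mathcal F}$. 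This is the constant bookkeeping you anticipated; once the stopping rule is fixed as above, the argument closes.
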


\section{Modified Helmholtz equation}
In this supplement, we extend the proposed SEPINN to the modified Helmholtz equation equipped with mixed boundary conditions:
\begin{align}\label{helmholtz}
	\left\{\begin{aligned}
		-\Delta u +A^2u&= f,&& \mbox{in }\Omega,\\
		u&=0,&& \mbox{on }\Gamma_D,\\
		{\partial_n u}&=0,&& \mbox{on }\Gamma_N,
	\end{aligned}\right.
\end{align}
where $A>0$ is the wave number. 
It is also known as the screened Poisson problem in the literature.
\subsection{Two-dimensional case}
Following the discussion in Section \ref{2d domain}, we use the basis $\{\phi_{j,k}\}_{k=1}^\infty$ to expand the solution $u$, find the singular term and split it from the solution $u$ of problem \eqref{helmholtz}. Since $u$ cannot be expressed in terms of an elementary function, the derivation of the singularity splitting differs slightly from that for the Poisson equation. Nonetheless, a similar decomposition holds; see \cite[Theorem 3.3]{2011Singular} for the detailed proof.
\begin{theorem}
	Let $\boldsymbol{v}_j,j=1,\cdots,M$, be the vertices of $\Omega$ whose interior angles $\omega_j,j=1,\cdots,M$, satisfy \eqref{condition}. Then the unique weak solution $u\in H^1(\Omega)$ of \eqref{helmholtz} can be decomposed into 
\begin{equation*}u=w+\sum_{j=1}^{M}\sum_{i\in\mathbb{I}_j}\gamma_{j,i}\eta_{\rho_j}(r_j) {\rm e}^{-Ar_j}s_{j,i}(r_j,\theta_j),\quad \mbox{with } w\in H^2(\Omega). 
\end{equation*}
	Moreover, the following a priori estimate holds
$$|w|_{H^2(\Omega)}+A|w|_{H^1(\Omega)}+A^2\|w\|_{L^2(\Omega)}\leq C\|f\|_{L^2(\Omega)}.$$
\end{theorem}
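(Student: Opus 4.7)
The plan is to mimic the Fourier-based local analysis in Section \ref{2d:singular repre}, but now tracking the extra zeroth-order term $A^2 u$ arising from the modified Helmholtz operator. I would work vertex by vertex: fix $\boldsymbol{v}_j$ with an interior angle $\omega_j$ satisfying \eqref{condition}, and in the local sector $G_j$ expand $u$ and $f$ in the angular basis $\{\phi_{j,k}\}_{k=1}^\infty$ from Table \ref{table 2.1}, exactly as in \eqref{fourier u}--\eqref{fourier f}. Substitution into \eqref{helmholtz} yields a countable family of radial ODEs of the form
\begin{equation*}
-u_{j,k}'' - r^{-1}u_{j,k}' + \bigl(\lambda_{j,k}^2 r^{-2} + A^2\bigr) u_{j,k} = f_k, \quad 0<r<R_j,
\end{equation*}
together with the boundedness condition $|u_{j,k}(0)|<\infty$ and $u_{j,k}(R_j)=0$. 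The left-hand side is a modified Bessel operator, so the two linearly independent homogeneous solutions are $I_{\lambda_{j,k}}(Ar)$ and $K_{\lambda_{j,k}}(Ar)$; the solution fitting the boundary data is a combination involving a modified Bessel kernel. This is the essential departure from the Poisson case and accounts for the exponential factor $e^{-Ar}$.

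Next, I would isolate the singular contribution. Using the small-argument asymptotics $I_{\lambda}(Ar)\sim c\, (Ar)^\lambda$ and the variation-of-parameters formula together with the rescaled kernel for the modified Bessel equation, the radial solution splits into a smooth piece plus a coefficient times $r^{\lambda_{j,k}} e^{-Ar}$, up to higher-order remainder. Summing the contributions corresponding to $\lambda_{j,i}<1$, i.e.\ the indices $i\in\mathbb{I}_j$ given in Table \ref{tab:sing-2d}, and multiplying by the cutoff $\eta_{\rho_j}$ gives the singular ansatz
\[
S = \sum_{j=1}^M \sum_{i\in\mathbb{I}_j} \gamma_{j,i}\, \eta_{\rho_j}(r_j)\, e^{-A r_j}\, s_{j,i}(r_j,\theta_j),
\]
with scalar coefficients $\gamma_{j,i}$ obtained by extraction formulas analogous to \eqref{eqn:extraction}, adjusted by the factor $e^{-Ar}$ and using dual singular functions for the modified problem. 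Because $\eta_{\rho_j}$ has disjoint supports, each vertex can be treated independently.

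Then I would define $w := u - S$ and verify $w\in H^2(\Omega)$. The key observation is that the local Fourier computation implies $w$ has Fourier modes that either correspond to $\lambda_{j,k}\ge 1$ (already $H^2$-regular since the associated radial profiles belong to $H^2(G_j)$) or are the smooth, non-singular part of the modes with $\lambda_{j,k}<1$. Coupled with standard interior elliptic regularity for the modified Helmholtz operator away from the vertices, this gives $w\in H^2(\Omega)$. For the a priori estimate, I would apply the well-known $A$-weighted regularity bound for the modified Helmholtz equation on a smooth or convex-modified domain (with the singular piece subtracted, the effective problem is in a space where the bound
$|w|_{H^2(\Omega)}+A|w|_{H^1(\Omega)}+A^2\|w\|_{L^2(\Omega)}\le C\|\tilde f\|_{L^2(\Omega)}$
holds), and estimate $\|\tilde f\|_{L^2(\Omega)} = \|f+\Delta S - A^2 S\|_{L^2(\Omega)}$ using $|\gamma_{j,i}|\le C\|f\|_{L^2(\Omega)}$ from the extraction formulas.

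The main obstacle I expect is controlling the zeroth-order contribution $A^2 u$ in the local analysis and ensuring that the $A$-dependence of the a priori estimate is sharp. Unlike the Laplace case, the radial ODEs no longer admit closed-form elementary solutions, so the splitting of each radial profile into a $r^{\lambda_{j,k}} e^{-Ar}$ singular piece plus an $H^2$-remainder requires a careful argument using modified Bessel asymptotics and a Taylor expansion that tracks all $A$-dependent constants uniformly. The second delicate point is that the extraction formulas for $\gamma_{j,i}$ must be formulated using dual singular functions adapted to $-\Delta+A^2$ rather than $-\Delta$, and one has to verify that the duality identities still yield $|\gamma_{j,i}|\le C\|f\|_{L^2(\Omega)}$ with constants independent of $A$ (or at worst of controlled growth in $A$).
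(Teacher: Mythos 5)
The paper does not actually prove this theorem: it is quoted from the literature, with the proof deferred to \cite[Theorem 3.3]{2011Singular}. Your plan reconstructs what is essentially the argument of that reference (and the Fourier--Bessel route is the natural one), so in outline it is the ``same approach'': angular eigenfunction expansion at each vertex as in \eqref{fourier u}--\eqref{fourier f}, the radial modified-Bessel ODE with homogeneous solutions $I_{\lambda_{j,k}}(Ar)$ and $K_{\lambda_{j,k}}(Ar)$, extraction of the leading term $c\,(Ar)^{\lambda}$ of $I_\lambda$, and the observation that replacing $r^{\lambda}$ by $r^{\lambda}{\rm e}^{-Ar}$ only perturbs the singular function by terms of order $r^{1+\lambda}$, which lie in $H^2$ near the vertex for all $\lambda_{j,i}>0$. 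The disjoint-support argument for treating vertices independently and the classification of which modes are already $H^2$ are also correct.

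Two steps of your plan are under-specified and are precisely where the work lies. First, the weighted a priori estimate cannot be obtained by ``applying the $A$-weighted regularity bound on a smooth or convex-modified domain'': $w$ still solves a mixed boundary value problem on the \emph{nonconvex} polygon, so convex-domain regularity is not applicable, and the bound $|w|_{H^2}+A|w|_{H^1}+A^2\|w\|_{L^2}\le C\|\tilde f\|_{L^2}$ must instead be extracted either mode-by-mode from the same Fourier construction (summing the radial estimates, which is the technically heavy part) or from the integration-by-parts identity $\|(-\Delta+A^2)w\|_{L^2}^2=\|\Delta w\|_{L^2}^2+2A^2\|\nabla w\|_{L^2}^2+A^4\|w\|_{L^2}^2$ combined with an $H^2$-versus-$\|\Delta\cdot\|_{L^2}$ estimate valid for $H^2$ functions satisfying the mixed boundary conditions. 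Second, the source seen by $w$ contains $(-\Delta+A^2)\bigl(\eta_\rho {\rm e}^{-Ar}s_{j,i}\bigr)$, whose leading contribution near the vertex is of size $A(2\lambda_{j,i}+1)\,r^{\lambda_{j,i}-1}{\rm e}^{-Ar}\phi$, i.e.\ it carries an explicit factor of $A$; so the claimed estimate is uniform in $A$ only if the coefficient bound for $\gamma_{j,i}$ compensates this growth. You flag this as the main obstacle, which is the right instinct, but as stated the plan does not close it; this $A$-bookkeeping is exactly what the cited reference's proof is organized around, and it is the reason the ${\rm e}^{-Ar}$-weighted singular functions (rather than the bare $s_{j,i}$) appear in the decomposition.
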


Once having the singularity solutions $s_{j,i}$, one can learn the parameters $\gamma_{j,i}$ and the DNN parameters $\theta$ of the regular part $ w_\theta $ as Algorithm \ref{algorithm:2d}. Thus, SEPINN applies equally well to the (modified) Helmholtz case. In contrast, the FEM uses an integral formula and numerical integration when calculating the flux intensity factors \cite{StephanWhiteman:1988,2012Flux}, which can be rather cumbersome. Further, such extraction formulas are still unavailable for the Helmholtz equation. In SEPINN, training the coefficients directly along with the DNN not only reduces manual efforts, but also increases the computational efficiency.

\subsection{Three-dimensional case}
Following the discussion in Section \ref{3d domain}, we employ the orthogonal basis $\{Z_n\}_{n=0}^\infty$ in Table \ref{table 2.3} to expand $u$ and $f$, cf. \eqref{fourier u 3d} and \eqref{fourier f 3d}, and substitute them into problem \eqref{helmholtz} leads to
\begin{align}\label{3dhelmholtz:fourier equation}
	\left\{\begin{aligned}
		-\Delta u_n+(\xi_{j,n}^2+A^2)u_n&=f_n,&& \mbox{in }\Omega_0,\\
		u_n&=0,&& \mbox{on }\Gamma_D,\\
		\partial_n u_n&=0,&& \mbox{on }\Gamma_N.
	\end{aligned}\right.
\end{align}
The only difference of \eqref{3dhelmholtz:fourier equation} from \eqref{3d:fourier equation} lies in the wave number. Upon letting  $\widetilde{A}_{j,n}=(\xi_{j,n}^2+A^2)^{1/2}$, we can rewrite Theorems \ref{theorem:3d split} and \ref{theorem: 3dsplit2} with $\xi_{j,n}=\widetilde{A}_{j,n}$, and obtain the following result.
\begin{theorem}
For each $f\in L^2(\Omega)$, let $u\in H^1(\Omega)$ be the unique weak solution to  problem \eqref{helmholtz}. Then $ u $ can be split into a sum of a regular part $ w $ and a singular part $S$ as
\begin{align*} 
u=w+S,\quad \mbox{with }w\in H^2(\Omega), \,\, S=\sum_{j=1}^M\sum_{i\in\mathbb{I}_j}S_{j,i}(x_j,y_j,z),\\
S_{j,i}(x_j,y_j,z)=\widetilde{\Phi}_{j,i}(r_j,z)\eta_{\rho_j}(r_j)s_{j,i}(r_j,\theta_j),
\end{align*}
with 
$$ \widetilde{\Phi}_{j,i}(x_j,y_j,z)=\frac{1}{2}\gamma_{j,i,0}+\sum_{n=1}^\infty\gamma_{j,i,n}{\rm e}^{-\widetilde{A}_{j,n}r_j}Z_n(z).$$
\end{theorem}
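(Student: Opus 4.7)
The plan is to mirror the proof strategy already used for the Poisson case in Theorems \ref{theorem:3d split}--\ref{theorem: 3dsplit2}, substituting the effective wavenumbers $\widetilde{A}_{j,n}=(\xi_{j,n}^2+A^2)^{1/2}$ for $\xi_{j,n}$ throughout. First, I would expand $u$ and $f$ in the orthogonal basis $\{Z_{j,n}(z)\}_{n=0}^\infty$ of $L^2(0,l)$ as in \eqref{fourier u 3d}--\eqref{fourier f 3d}. Plugging these expansions into \eqref{helmholtz} and using the boundary conditions on $\Gamma_{z_1},\Gamma_{z_2}$ to justify term-by-term differentiation in $z$, I obtain the countable family of 2D screened Poisson problems \eqref{3dhelmholtz:fourier equation} satisfied by the Fourier coefficients $u_n\in H^1(\Omega_0)$, with effective wavenumber $\widetilde{A}_{j,n}$ in place of $A$.

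Next, to each mode $u_n$ I would apply the 2D singular decomposition theorem for the modified Helmholtz equation with wavenumber $\widetilde{A}_{j,n}$: there exist $w_n\in H^2(\Omega_0)$ and scalars $\{\gamma_{j,i,n}\}$ such that
\begin{equation*}
u_n(x,y)=w_n(x,y)+\sum_{j=1}^M\sum_{i\in\mathbb{I}_j}\gamma_{j,i,n}\,\eta_{\rho_j}(r_j)\,{\rm e}^{-\widetilde{A}_{j,n}r_j}\,s_{j,i}(r_j,\theta_j),
\end{equation*}
together with the a priori estimate $|w_n|_{H^2(\Omega_0)}+\widetilde{A}_{j,n}|w_n|_{H^1(\Omega_0)}+\widetilde{A}_{j,n}^2\|w_n\|_{L^2(\Omega_0)}\leq C\|f_n\|_{L^2(\Omega_0)}$, with $C$ independent of $n$. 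I would then reassemble by setting $w(x,y,z)=\tfrac12 w_0(x,y)Z_0(z)+\sum_{n\ge 1}w_n(x,y)Z_n(z)$ and, for each $(j,i)$, $\widetilde{\Phi}_{j,i}(r_j,z)=\tfrac12\gamma_{j,i,0}+\sum_{n\ge 1}\gamma_{j,i,n}{\rm e}^{-\widetilde{A}_{j,n}r_j}Z_n(z)$, which together yield the claimed form of $u=w+S$.

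The main technical step will be verifying $w\in H^2(\Omega)$ and the convergence of the series defining $\widetilde{\Phi}_{j,i}$ in a sense strong enough that $S_{j,i}$ makes sense as a function. For $w$, I would use Parseval's identity with respect to $\{Z_n\}$: the purely in-plane derivatives $\partial_{xx}w,\partial_{yy}w,\partial_{xy}w$ are controlled by $\sum_n|w_n|_{H^2(\Omega_0)}^2\leq C\sum_n\|f_n\|_{L^2(\Omega_0)}^2=C\|f\|_{L^2(\Omega)}^2$; the mixed derivatives $\partial_{xz}w,\partial_{yz}w$ are controlled by $\sum_n\xi_{j,n}^2|w_n|_{H^1(\Omega_0)}^2\leq C\sum_n\widetilde{A}_{j,n}^2|w_n|_{H^1(\Omega_0)}^2\leq C\|f\|_{L^2(\Omega)}^2$; and the crucial $\partial_{zz}w$ term is controlled by $\sum_n\xi_{j,n}^4\|w_n\|_{L^2(\Omega_0)}^2\leq C\sum_n\widetilde{A}_{j,n}^4\|w_n\|_{L^2(\Omega_0)}^2\leq C\|f\|_{L^2(\Omega)}^2$. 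It is precisely the $\widetilde{A}_{j,n}^2\|w_n\|_{L^2(\Omega_0)}$ factor in the 2D a priori estimate that provides the decay needed to pass from square-summability of $\|f_n\|_{L^2(\Omega_0)}$ to $H^2$-control on $w$ in the $z$-direction.

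The hard part will be the convergence of the singular series $\widetilde{\Phi}_{j,i}$ and, relatedly, the uniqueness of the splitting. For convergence, I would transcribe the weighted bound $|\gamma_{j,i,0}|^2+\sum_{n\ge 1}\xi_{j,n}^{2(1-\lambda_{j,i})}|\gamma_{j,i,n}|^2\leq C\|f\|_{L^2(G_j)}$ from Theorem \ref{theorem: 3dsplit2}, whose derivation only uses the reduced 2D problem and hence survives the replacement $\xi_{j,n}\to\widetilde{A}_{j,n}$ modulo checking that the exponential kernel ${\rm e}^{-\widetilde{A}_{j,n}r_j}$ still produces the right Poisson-type convolution structure for the flux intensity function in the screened case. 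For uniqueness, one appeals to the linear independence of $\{\eta_{\rho_j}s_{j,i}\}$ modulo $H^2(\Omega)$ (the singular exponents $\lambda_{j,i}<1$ prevent the singular modes from lying in $H^2$), so the decomposition $u=w+S$ is forced.
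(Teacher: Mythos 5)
Your proposal takes essentially the same route as the paper: the paper's entire argument for this theorem is to expand $u$ and $f$ in the basis $\{Z_{j,n}\}$, observe that the resulting 2D problems \eqref{3dhelmholtz:fourier equation} differ from \eqref{3d:fourier equation} only in the wave number, and invoke Theorems \ref{theorem:3d split}--\ref{theorem: 3dsplit2} with $\xi_{j,n}$ replaced by $\widetilde{A}_{j,n}$. Your mode-by-mode decomposition, Parseval-based verification that $w\in H^2(\Omega)$ (using $\xi_{j,n}\le\widetilde{A}_{j,n}$), and the convergence/uniqueness discussion correctly fill in details the paper leaves implicit.
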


Note that the expressions for the coefficients $ \gamma_{j,i,n} $  differ from that in Section \ref{3d domain}, which however are not needed for implementing SEPINN and thus not further discussed. Nonetheless, we can still use both SEPINN-C and SEPINN-N to solve 3D Helmholtz problems.

\subsection{Numerical experiment for the modified Helmholtz equation}
The next example illustrates SEPINN on the modified Helmholtz equation in two- and three-dimensional cases.
\begin{example}\label{exam:helmholtz}
Consider the following problem in both 2D and 3D domains:
\begin{equation*}
	-\Delta u + \pi^2u = f, \quad\mbox{in}\quad\Omega
\end{equation*}
with the source $f$ taken to be
$f=(4x^3+6x)y{\rm e}^{x^2-1}({\rm e}^{y^2-1}-1)-(4y^3+6y)x{\rm e}^{y^2-1}({\rm e}^{x^2-1}-1)+\pi^2xy({\rm e}^{x^2-1}-1)({\rm e}^{y^2-1}-1)-\Delta({\rm e}^{-\pi r}\eta_\rho s)+\pi^2{\rm e}^{-\pi r}\eta_\rho s$ for the 2D domain $\Omega=\Omega_1=(-1,1)^2\backslash([0,1)\times(-1,0])$, and $f	=-((4x^3+6x)y{\rm e}^{x^2-1}({\rm e}^{y^2-1}-1)-(4y^3+6y)x{\rm e}^{y^2-1}({\rm e}^{x^2-1}-1)
+2\pi^2xy({\rm e}^{x^2-1}-1)({\rm e}^{y^2-1}-1))\sin(\pi z)-\Delta({\rm e}^{-\sqrt{2}\pi r}\eta_\rho s\sin(\pi z))+\pi^2{\rm e}^{-\sqrt{2}\pi r}\eta_\rho s\sin(\pi z)$ for the 3D domain $\Omega=\Omega_2=\Omega_1\times(-1,1)$,
with $\rho=1$ and $R=\frac{1}{2}$ in \eqref{cutoff}, and $s=r^{\frac{2}{3}} \sin(\frac{2 \theta}{3})$, $\Gamma_D=\partial\Omega$. The exact solution $u$ is given by
	\begin{align}
		u=\left\{\begin{aligned}
			&xy({\rm e}^{x^2-1}-1)({\rm e}^{y^2-1}-1)+ {\rm e}^{-\pi r}\eta_\rho s,\quad\Omega=\Omega_1,\\
			&xy({\rm e}^{x^2-1}-1)({\rm e}^{y^2-1}-1)\sin(\pi z)+ {\rm e}^{-\sqrt{2}\pi r}\eta_\rho s\sin(\pi z),\quad\Omega=\Omega_2.
		\end{aligned}\right.
	\end{align}
\end{example}

For the 2D problem, we employ a 2-20-20-20-1 DNN. The first stage of the PF strategy yields an estimate $\widehat{\gamma}^*=0.9934$, and the final prediction error $e$ is  7.15e-3. This accuracy is comparable with that for the Poisson problem in Example \ref{exam:2d-lshape}.
In the 3D case, we have the coefficients $\gamma_1^*=1$ and $\gamma_n^*=0$ for $n\in\mathbb{N}\backslash\{1\}$. In SEPINN-C, we take $N=10$ terms. 
In the first stage, we set the learning rate 1.0e-3 for the DNN parameters $\theta$ and 4.0e-3 for coefficients $\boldsymbol\gamma$.  The estimated $\widehat{\boldsymbol\gamma}^*$ are shown in Table \ref{table:para3}, which approximate accurately the exact coefficients, and the prediction error $e$ after the second stage is 4.34e-2.
In SEPINN-N, we use five 4-layer (3-20-20-20-1) DNNs to approximate $w$ and 3-10-10-10-1 to approximate $\Phi$, and optimize the loss with L-BFGS with a maximum of 5000 iterations. The final prediction error $e$ is 3.07e-2. The results are shown in Fig. \ref{fig:exam:helmholtz}. SEPINN-C and SEPINN-N give very similar pointwise errors, and the singularity at the reentrant corner is accurately resolved, due to singularity enrichment. This shows clearly the flexibility of SEPINN for the screened Poisson equation with geometric singularities.

\begin{table}[H]
	\centering\begin{threeparttable}
		
		\caption{\label{table:para3} The estimated coefficients $\gamma_n$ in the 3D case for Example \ref{exam:helmholtz}, with five significant digits.}
		\begin{tabular}{c|c|c|c|c|c|c}
			\toprule
			& $\gamma_0$ & $\gamma_1$ & $\gamma_2$ & $\gamma_3$ & $\gamma_4$ & $\gamma_5$ \\
			\hline
			estimate & 6.2415{\rm e}-5 & 9.9140{\rm e}-1 & 3.1683{\rm e}-3 & -1.5563{\rm e}-3 & 1.3309{\rm e}-3 & -5.9875{\rm e}-4\\
			\midrule
			& $\gamma_6$ & $\gamma_7$ & $\gamma_8$ & $\gamma_9$ & $\gamma_{10}$ & \\
			\hline
			estimate & 2.0147{\rm e}-3 & 1.2044{\rm e}-3 & 3.6573{\rm e}-4 & -3.8277{\rm e}-4 & -2.4552{\rm e}-3 & \\
			\bottomrule
		\end{tabular}
	\end{threeparttable}
\end{table}

\renewcommand{\figheight}{3.5cm}
\begin{figure}[H]
	\centering\setlength{\tabcolsep}{0pt}
	\begin{tabular}{ccc}
		\includegraphics[height=\figheight]  {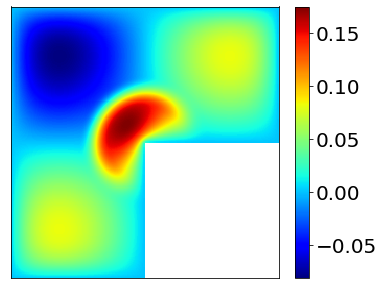} & \includegraphics[height=\figheight]  {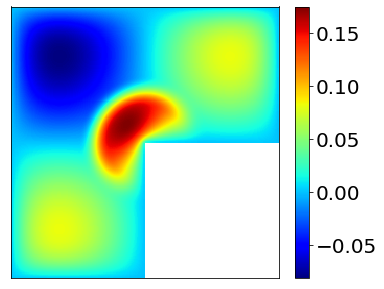} & \includegraphics[height=\figheight]  {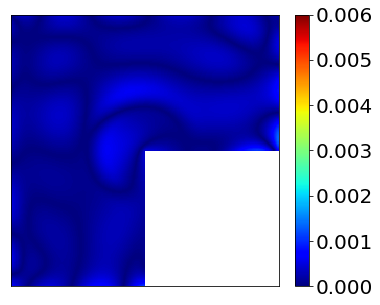}\\
		\includegraphics[height=\figheight]  {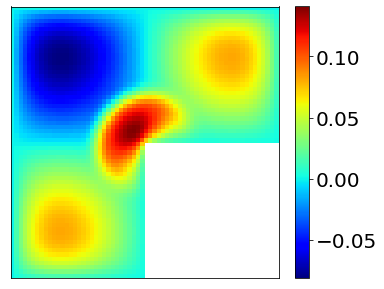} & \includegraphics[height=\figheight]  {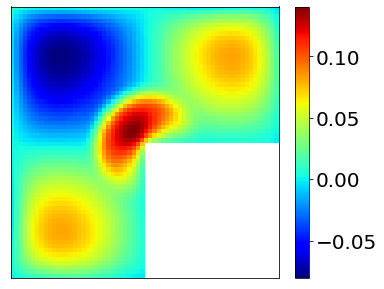} & \includegraphics[height=\figheight]  {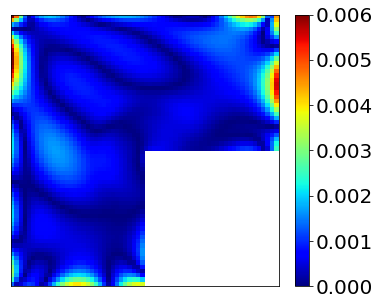} \\
		\includegraphics[height=\figheight]  {true-5-cut.png} &
		 \includegraphics[height=\figheight]  {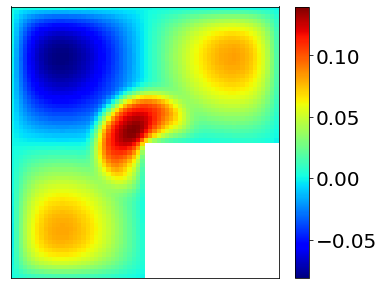} & \includegraphics[height=\figheight]  {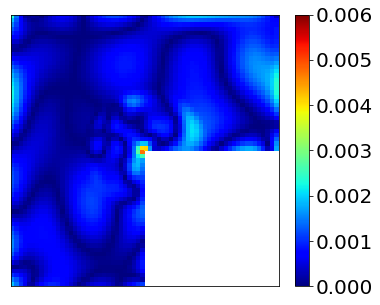}\\
		(a) exact & (b) predicted  & (c) error \\
	\end{tabular}
	\caption{\label{fig:exam:helmholtz} 2D problem with SEPINN {\rm(}top{\rm)} and 3D problem with SEPINN-C {\rm(}middle{\rm)} and SEPINN-N {\rm(}bottom{\rm)} for Example \ref{exam:helmholtz}, with slice at $z=\frac{1}{2}$.}
\end{figure}

\bibliographystyle{siam}
\bibliography{refer}
\end{document}